\newcommand{\M}{\operatorname{M}}
\newcommand{\rar}{\longrightarrow}
\newcommand{\hrar}{\hookrightarrow}
\newcommand{\rarab}[1]{\overset{#1}{\longrightarrow}}
\newcommand{\al}{\alpha}
\newcommand{\be}{\beta}
\newcommand{\ga}{\gamma}
\newcommand{\Ga}{\Gamma}
\newcommand{\de}{\delta}
\newcommand{\la}{\lambda}
\newcommand{\La}{\Lambda}
\newcommand{\ka}{\kappa}
\newcommand{\eps}{\epsilon}
\newcommand{\sg}{\sigma}
\newcommand{\om}{\omega}
\newcommand{\Om}{\Omega}
\newcommand{\Sg}{\Sigma}
\newcommand{\bA}{{\mathbb A}}
\newcommand{\bC}{{\mathbb C}}
\newcommand{\bF}{{\mathbb F}}
\newcommand{\bN}{{\mathbb N}}
\newcommand{\bQ}{{\mathbb Q}}
\newcommand{\bR}{{\mathbb R}}
\newcommand{\bS}{{\mathbb S}}
\newcommand{\bT}{{\mathbb T}}
\newcommand{\bZ}{{\mathbb Z}}
\newcommand{\cA}{{\mathcal A}}
\newcommand{\cF}{{\mathcal F}}
\newcommand{\cO}{{\mathcal O}}
\newcommand{\cP}{{\mathcal P}}
\newcommand{\caR}{{\mathcal R}}
\newcommand{\cS}{{\mathcal S}}
\newcommand{\cU}{{\mathcal U}}
\newcommand{\cX}{{\mathcal X}}
\newcommand{\fa}{{\mathfrak a}}
\newcommand{\fb}{{\mathfrak b}}
\newcommand{\fp}{{\mathfrak p}}
\newcommand{\fq}{{\mathfrak q}}
\newcommand{\abs}[1]{\vert #1\vert}
\newcommand{\End}{\operatorname{End}}
\newcommand{\Hom}{\operatorname{Hom}}
\newcommand{\Aut}{\operatorname{Aut}}
\newcommand{\Span}{\operatorname{Span}}
\newcommand{\id}{\operatorname{id}}
\newcommand{\diag}{\operatorname{diag}}
\newcommand{\Gal}{\operatorname{Gal}}
\newcommand{\GL}{\operatorname{GL}}
\newcommand{\val}{\operatorname{val}}
\newcommand{\rad}{\operatorname{rad}}
\newcommand{\tr}{\operatorname{Tr}}
\newcommand{\sbr}{\smallbreak}
\newcommand{\indlim}[1]{\lim\limits_{\underset{N}{\longrightarrow}}}
\newtheorem{thm}{Theorem}
\newtheorem{cor}[thm]{Corollary}
\newtheorem{lem}[thm]{Lemma}
\newtheorem{prop}[thm]{Proposition}
\newtheorem*{t:mainth0}{Theorem}
\theoremstyle{remark}
\newtheorem{rem}[thm]{Remark}
\newtheorem{defin}[thm]{Definition}
\newtheorem{prob}{Problem}
\newtheorem{question}{Question}
\newtheorem{example}{Example}
\newtheorem{case}{Case}
\newcommand{\bbe}{\begin{equation}}
\newcommand{\ee}{\end{equation}}
\title[Endomorphism rings of toroidal solenoids]{Endomorphism rings of toroidal solenoids}
\author{Maria Sabitova}
\address{Department of Mathematics\\
CUNY Queens College\\ 65-30 Kissena Blvd.\\ Flushing, NY
11367\\USA} 
\email{Maria.Sabitova@qc.cuny.edu}
\date{\today}
\begin{document}

\tableofcontents

\begin{abstract}  
%The aim of this proposal is to use number theory to study $n$-dimensional toroidal solenoids. So far, a significant progress has been made by establishing a connection between generalized ideal classes and solenoid isomorphisms in the $2$-dimensional case. More precisely, the proposer 
We study the endomorphism ring $\End(G_A)$ of a subgroup $G_A$ of $\bQ^n$ defined by a non-singular 
$n\times n$-matrix $A$ with integer entries. 
%In the first non-trivial case when $n=2$, we give necessary and sufficient conditions for two such groups to be isomorphic.
In the case when the characteristic polynomial of $A$ is irreducible and an extra assumption holds if $n$ is not prime, we show that $\End(G_A)$ is commutative and can be identified with a subring of the number field generated by an eigenvalue of $A$.
%we give a criterion for a matrix $T$ with rational entries to define an endomorphism of $G_A$ and essentially, $T$ commutes with $A$ and $\End(G_A)$ can be identified with an eigenvalue of $T$ and hence is a subring of the ring of integers of the number field generated by an eigenvalue of $A$. 
% is an endomorphism  two groups are isomorphic if and only if the corresponding ideal classes are equivalent. 
The obtained results can be applied to studying endomorphisms of associated toroidal solenoids and $\bZ^n$-odometers.
In particular, we build a connection between toroidal solenoids and $\cS$-integer dynamical systems, provide a formula for the number of periodic points of a toroidal solenoid endomorphism, and show that the linear representation group of a 
$\bZ^n$-odometer is computable.
 \end{abstract}

% Mathematics Subject Classification (2020) primary: 11Z05
%\noindent Keywords: 

\maketitle

\section{Introduction}
We study the endomorphism ring of a subgroup of $\bQ^n$ defined by a matrix with integer entries. The group arises naturally as the character group of a toroidal solenoid. More precisely, let $A\in \operatorname{M}_n(\bZ)$ be a non-singular $n\times n$-matrix with integer entries. Denote
$$%\bbe\label{eq:g}
G_A=\left\{A^{k}{\bf x}\, \vert \, {\bf x}\in\bZ^n,\,k\in\bZ\right\},\quad \bZ^n\subseteq G_A\subseteq\bQ^n.
$$%\ee
One can readily verify that $G_A$ is a subgroup of $\bQ^n$. In \cite{s1} and \cite{s2}, we study the classification problem of groups $G_A$. In particular, given two matrices $A,B\in \operatorname{M}_n(\bZ)$ with integer entries, we answer the question of when the corresponding groups $G_A$, $G_B$ are isomorphic as abstract groups in terms of the matrices $A,B$. We cover the case $n=2$ in \cite{s1} and the case of an arbitrary $n$ in \cite{s2}. Groups $G_A$ arise in connection with toroidal solenoids. Toroidal solenoids defined by non-singular matrices with integer entries were introduced by M.~C.~McCord in 1965 \cite{m}. 
%Let $\bT^n$ denote a torus considered as a quotient of  
%$\bR^n$ by its subgroup $\bZ^n$. A matrix $A\in\operatorname{M}_n(\bZ)$ induces a map $A:\bT^n\rar \bT^n$,
%$A\left([{\bf x}]\right)=[A{\bf x}]$, $[{\bf x}]\in\bT^n$, ${\bf x}\in\bR^n$. Consider the inverse system $(M_j,f_j)_{j\in\bN}$, where $f_j:M_{j+1}\rar M_{j}$,  
%$M_j=\bT^n$ and $f_j=A$ for all $j\in\bN$.
%The inverse limit ${\bS}_A$ of the system is called a  ({\em toroidal}) {\em solenoid}. As a set, $\bS_A$ is a subset of 
%$\prod_{j=1}^{\infty} M_j$ consisting of points $(z_j)\in\prod_{j=1}^{\infty} M_j$ such that $z_j\in M_j$ and $f_j(z_{j+1})=z_{j}$ for $\forall j\in\bN$, {\it i.e.},
%\bbe\label{eq:solen}
%\bS_A=\left\{(z_j)\in\prod_{j=1}^{\infty}\bT^n\,\,\Big\vert\,\,z_j\in\bT^n,\,\, A(z_{j+1})=z_j,\,\,j\in\bN \right\}.
%\ee
%Endowed with the natural group structure and the induced topology from the Tychonoff (product) topology on $\prod_{j=1}^{\infty} \bT^n$, 
A toroidal solenoid $\bS_A$ defined by a non-singular $A\in\operatorname{M}_n(\bZ)$ is an  
$n$-dimensional topological abelian group. It is compact, metrizable, and connected, but not locally connected and not path connected. Toroidal solenoids are examples of inverse limit dynamical systems. When $n=1$ and $A=d$, $d\in\bZ$, solenoids  are called $d$-{\it adic solenoids} or {\it Vietoris solenoids}. The first examples were studied by L. Vietoris in 1927 for $d=2$
\cite{v} and later in 1930 by van Dantzig for an arbitrary $d$ \cite{d}. It is known that the first \^Cech cohomology group $H^1(\bS_A,\bZ)$ of $\bS_A$ is isomorphic to $G_{A^t}$, where  $A^t$ is the transpose of $A$.
 On the other hand, since $\bS_A$ is a compact connected abelian group, $H^1(\bS_A,\bZ)$ is isomorphic to the character group 
 $\widehat{\bS_A}$ of $\bS_A$. Thus $\widehat{\bS_A}\cong G_{A^t}$ and, using Pontryagin duality theorem, 
 $\bS_A\cong \widehat{G_{A^t}}$ as topological groups, where $G_{A^t}$ is endowed with the discrete topology, 
the dual $\widehat{G_{A^t}}$ is endowed with the compact-open topology, and $\bS_A$ is endowed with the  topology of an inverse limit. Groups $G_A$ also arise as the first cohomology groups of constant base $\bZ^n$-odometers 
$X_{A^t}$
defined by $A^t$ \cite{gps} and as the dimension groups of subshifts of finite type defined by $A^t$ \cite{bs}. %, \cite{scott}. 
%??? For two non-singular $A,B\in\M_n(\bZ)$, we have that $G_A\cong G_B$ as abstract groups if and only if 
%$\cS_A\cong \cS_B$ as topological groups if and only if $X_{A^t}$, $X_{B^t}$ are conjugate if and only if 
%$\Om_{A^t}$, $\Om_{B^t}$ are shift equivalent. ??? Or they are invariants under ...

\sbr

In this paper, we study the endomorphism ring $\End(G_A)$ of $G_A$ for an arbitrary $n$. Based on our work in \cite{s1} and \cite{s2}, we give a general criterion for a matrix with rational entries $T\in\M_n(\bQ)$ to define an endomorphism of $G_A$, equivalently, $T(G_A)\subseteq G_A$ (Theorem \ref{th:prelim0}) as well as a more concrete description when $n=2$ (Section \ref{ss:two}). In the case when the characteristic polynomial of $A$ is irreducible and an extra assumption holds if $n$ is not prime, we prove that $T\in\End(G_A)$ is either zero or $T$ commutes with $A$. Moreover,  the eigenvalues of $T$ are elements of the form $a\la^k$, where $\la$ is an eigenvalue of $A$, $k\in\bZ$,  and $a$ is an algebraic integer of the number field $\bQ(\la)$ generated by $\la$ (Proposition \ref{pr:end}). This implies that $\End(G_A)$ is a commutative ring, and $\Aut(G_A)$ is a finitely generated abelian group. As a consequence, considering a toroidal solenoid $\bS_A$ as a dynamical system with the automorphism $\sg$ defined by multiplication by $A$, it shows that every homomorphism of $\bS_A$ as a topological group is a morphism of the dynamical system 
$(\bS_A,\sg)$. Other noteworthy consequences include the connection between toroidal solenoids and $\cS$-integer dynamical systems defined in \cite{cew}. It turns out that one can consider a toroidal solenoid as a similar but more general object than an $\cS$-integer dynamical system. The connection allows us to obtain a formula for the number of periodic points of an endomorphism of $\bS_A$ similar to the one in \cite{cew}. The case of $n=2$ is covered in \cite{hl} for a more general class of toroidal solenoids, and our formula holds in higher-dimensions for $\bS_A$. We also apply our results to $\bZ^n$-odometers. We recover results of \cite{cp} in the case when $n=2$ and generalize them to higher-dimensions. We also discuss the question of \cite{cp} on the computability of the linear representation group of a $\bZ^n$-odometer defined by an integer matrix. % in the affirmative. 

\section{Notation}
\noindent $A,B\in\M_n(\bZ)$ non-singular \\
$h_A\in\bZ[x]$ characteristic polynomial of $A$ \\
$G_A=\left\{A^{k}{\bf x}\, \vert \, {\bf x}\in\bZ^n,\,k\in\bZ\right\}$ \\
$\caR=\bZ\left[\frac{1}{\det A} \right]$ \\
$\cP=\cP(A)=\{\text{primes }p\in\bN\text{ dividing }\det A\}$ \\
$\cP'=\cP'(A)=\left\{p\in\cP\,\vert\,\,h_A\not\equiv x^n\,(\text{mod }p)\right\}$ \\
$t_p=$ multiplicity of zero in the reduction of ${h}_A$ modulo $p$ \\
$\bQ_p=$ field of $p$-adic numbers \\
$\bZ_p=$ ring of $p$-adic integers \\
$\bF_p=$ finite field with $p$ elements \\
$\bQ_p^n=(\bQ_p)^n=\bQ_p\times\cdots\times\bQ_p$ \\
$\bZ_p^n=(\bZ_p)^n=\bZ_p\times\cdots\times\bZ_p$ \\
$\overline{G}_{A,p}=G_A\otimes_{\bZ}\bZ_p$ \\
$\overline{\bQ}=$ algebraic closure of $\bQ$ \\
$\la=$ eigenvalue of $A$ \\
$K=\bQ(\la)$ \\ 
${\bf u}=\left(\begin{matrix} u_1 & \ldots & u_n \end{matrix}\right)^t$ eigenvector of $A$ corresponding to $\la$ \\
%\bbe\label{eq:ya1}
$\bZ[{\bf u}]=\{m_1u_1+\cdots +m_nu_n\,\vert\,m_1,\ldots,m_n\in\bZ \}$ \\
$Y_{A}({\bf u},\la)=\{m_1\la^{k_1}u_1+\cdots +m_n\la^{k_n}u_n\,\vert\,m_1,\ldots,m_n,k_1,\ldots,k_n\in\bZ \}$ \\
%\ee
$\{\la_1,\ldots,\la_n\}=$ eigenvalues of $A$ \\
$\{\sg_1=\id,\sg_2,\ldots,\sg_n\}=$ embeddings of $K$ into $\overline{\bQ}$ \\
$M=\left(\begin{matrix} \sg_1({\bf u}) & \ldots & \sg_n({\bf u}) \end{matrix}\right)\in\M_n(\overline{\bQ})$ \\
$m=(\det M)^2\in\bZ$ \\
%$\{{\bf u}_1,\ldots,{\bf u}_n\}=$ eigenvalues of $A$ \\
$\cO_K=$ ring of integers of $K$ \\
$\cO_K^{\times}=$   units of $\cO_K$ \\
$\fp=$ prime ideal of $\cO_K$ above $p$ \\
$\val_{\fp}(x)=$ $\fp$-adic valuation of $x\in K$ \\
$\cS=$ a set of prime ideals of $\cO_K$ \\
$\cO_{K,\cS}=\{x\in K\,\vert\, \val_{\fp}(x)\geq 0\text{ for any prime ideal }\fp\text{ of }\cO_K\text{ not in }\cS\}$ \\ 
$\cU_{K,\cS}=\{x\in K\,\vert\, \val_{\fp}(x)=0\text{ for any prime ideal }\fp\text{ of }\cO_K\text{ not in }\cS\}$ \\
$\cS_{\la}=$ all prime ideals of $\cO_K$ dividing $\la$ \\
$K_{\fp}=$ completion of $K$ with respect to $\fp$ \\
$\cO_{\fp}=$ ring of integers of $K_{\fp}$ \\
$\cX_{A,\fp}=$ $\Span_K\{\text{generalized }\la\text{-eigenvectors of }A, \fp\,\vert\,\la\}$ \\
$\rad(n)=$ product of all distinct prime divisors of $n\in\bZ$ \\
$(u,v)=$ greatest common divisor of $u,v\in\bZ$ \\
%Here, for $n\in\bZ$, $n\ne\pm1 $, $\rad(n)\in\bN$ is the product of all distinct prime divisors $p\in\bN$ of $n$. 
$\bS_A=$ toroidal solenoid defined by $A$ \eqref{eq:solen} \\ %\eqref{eq:solen} \\
$\widehat{G}=\Hom_{cont}(G,\bT^1)$  Pontryagin dual of a topological group $G$ \\
$X_A=$ $\bZ^n$-odometer defined by $A$ \eqref{eq:xg} \\
%$Y_A=$ $\bZ^n$-odometer defined by $G_A$ ?? \\
$\vec{N}(X_A)=$ linear representation group of $X_A$ \eqref{eq:linrepgr} \\

\section{Endomorphisms of $G_A$}
This section presents key results on endomorphisms of $G_A$, derived as consequences from the proofs in \cite{s2} that characterize isomorphisms between two groups of the form $G_A$, $G_B$ ($B\in\M_n(\bZ)$ is non-singular). %We state them explicitly for the convenience.

\sbr

\subsection{Localization and easy cases}\label{ss:loc}
Let $A\in \operatorname{M}_n(\bZ)$ be a non-singular $n\times n$-matrix with integer entries. Denote
\bbe\label{eq:g}
G_A=\left\{A^{k}{\bf x}\, \vert \, {\bf x}\in\bZ^n,\,k\in\bZ\right\},\quad \bZ^n\subseteq G_A\subseteq\bQ^n,
\ee
where ${\bf x}\in\bZ^n$ is written as a column. Denote by $\End(G_A)$ 
the endomorphism ring of $G_A$, consisting of all (group) homomorphisms $\phi:G_A
\rar G_A$.  %and let $I$ denote the $n\times n$-identity matrix. 
%Let $\cP=\cP(A)$ denote the set of all primes $p\in\bN$ that divide $\det A\in\bZ$,\begin{eqnarray*}
%\cP'(A)&=&\left\{p\in\cP(A),\,\,h_A\not\equiv x^n\,(\text{mod }p)\right\},
%%\cS'(A)&=&\left\{p\in\cS\,\,\vert\,\, a_1\not\equiv 0\,(\text{mod }p),\ldots,a_{n-1}\not\equiv 0\,(\text{mod }p)\right\}=\\
%%&=&\left\{p\in\bZ\text{ is a prime}\,\,\vert\,\,a_0\equiv 0\,(\text{mod }p),\,\,a_1\not\equiv 0\,(\text{mod }p),\ldots, a_{n-1}\not\equiv 0\,(\text{mod }p) \right\}.
%\end{eqnarray*}
%where $h_A\in\bZ[x]$ denotes the characteristic polynomial of $A$, 
Denote 
\bbe\label{eq:car}
\caR=\caR(A)=\bZ\left[\frac{1}{\det A} \right]=\left\{\frac{k}{(\det A)^l}\,\,\Big\vert\,\,k,l\in\bZ  \right\}.
\ee
If $T\in\End(G_{A})$, then $T\in\M_n(\caR)$. Indeed, one can check that any homomorphism from $G_A$ to $G_A$ is given by a matrix $T\in\M_n(\bQ)$. Moreover, from the definition of $G_A$, there exists $i\in\bN\cup\{0\}$ such that $A^iT\in\M_n(\bZ)$, hence $T\in\M_n(\caR)$. Thus, $\End(G_A)\subseteq \M_n(\caR)$. Note that 
$\bZ[A,A^{-1}]\subseteq \End(G_A)$. 
Let 
$$
\cP=\cP(A)=\{\text{primes }p\in\bN\text{ dividing }\det A\},
$$
\begin{eqnarray*}
\cP'=\cP'(A)&=&\left\{p\in\cP\,\vert\,h_A\not\equiv x^n\,(\text{mod }p)\right\},
%\cS'(A)&=&\left\{p\in\cS\,\,\vert\,\, a_1\not\equiv 0\,(\text{mod }p),\ldots,a_{n-1}\not\equiv 0\,(\text{mod }p)\right\}=\\
%&=&\left\{p\in\bZ\text{ is a prime}\,\,\vert\,\,a_0\equiv 0\,(\text{mod }p),\,\,a_1\not\equiv 0\,(\text{mod }p),\ldots, a_{n-1}\not\equiv 0\,(\text{mod }p) \right\}.
\end{eqnarray*}
where $h_A\in\bZ[x]$ is the characteristic polynomial of $A$. 

\sbr

For a prime $p\in\bN$, let $\bZ_p$ denote the ring of 
$p$-adic integers and let $\bQ_p$ denote the field of $p$-adic numbers. Let $\overline{G}_{A,p}=G_A\otimes_\bZ{\bZ_p}$, so that
$$
\overline{G}_{A,p}=\left\{A^{k}{\bf x}\, \vert \, {\bf x}\in\bZ_p^n,\,k\in\bZ\right\},\quad \bZ_p^n\subseteq \overline{G}_{A,p}\subseteq\bQ_p^n.
$$
We know that 
\bbe\label{eq:zp}
\overline{G}_{A,p}\cong \bQ_p^{t_p}\oplus \bZ_p^{n-t_p}
\ee
as $\bZ_p$-modules, and $t_p$ equals the multiplicity of zero in the reduction of $h_A$ modulo $p$, $0\leq t_p\leq n$ \cite[p. 196, Prop. 3.8]{s1}. 

\sbr

Clearly, for $T\in\M_n(\bQ)$, if 
$T(G_A)\subseteq G_A$, then $T(\overline{G}_{A,p})\subseteq \overline{G}_{A,p}$, {\it i.e.}, every $T\in\End(G_A)$ induces an endomorphism of $\overline{G}_{A,p}$. It turns out that the converse is also true. 
%The next lemma treats the two cases $\cP=\emptyset$ or $\cP'=\emptyset$. 

\begin{lem}\label{l:simple}
For $T\in\M_n(\bQ)$, we have that $T\in\End(G_A)$ if and only if $T\in\M_n(\caR)$ and 
$T\in\End(\overline{G}_{A,p})$
%$T(\overline{G}_{A,p})\subseteq \overline{G}_{A,p}$ 
for any $p\in\cP'$.
\end{lem}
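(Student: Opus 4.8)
\emph{Proof proposal.} The forward implication is essentially contained in the discussion preceding the lemma: if $T\in\End(G_A)$ then $T\in\M_n(\caR)$, and $T$ induces an endomorphism of each $\overline{G}_{A,p}$, in particular for $p\in\cP'$. For the converse I would first reduce to \emph{all} primes: I claim that for $T\in\M_n(\caR)$ the condition $T\in\End(\overline{G}_{A,p})$ holds automatically when $p\notin\cP'$, so that it suffices to prove that $T\in\M_n(\caR)$ together with $T(\overline{G}_{A,p})\subseteq\overline{G}_{A,p}$ for every prime $p$ forces $T(G_A)\subseteq G_A$. Indeed, if $p\notin\cP$ then $\det A\in\bZ_p^{\times}$, so $A\in\GL_n(\bZ_p)$, hence $\overline{G}_{A,p}=\bZ_p^n$, which $T\in\M_n(\bZ_p)$ preserves. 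If $p\in\cP\setminus\cP'$ then $h_A\equiv x^n\,(\text{mod }p)$, so $t_p=n$, and by \eqref{eq:zp} the $\bZ_p$-module $\overline{G}_{A,p}$ is $p$-divisible; being a $p$-divisible $\bZ_p$-submodule of $\bQ_p^n$ that contains $\bZ_p^n$, it contains $p^{-k}\bZ_p^n$ for all $k\ge 0$ and so equals $\bQ_p^n$, which $T\in\M_n(\bQ_p)$ certainly preserves.

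For the remaining local-to-global step, set $N=G_A+T(G_A)$. Since $A^{-1}=(\det A)^{-1}\mathrm{adj}(A)$ with $\mathrm{adj}(A)\in\M_n(\bZ)$ one gets $G_A=\bigcup_{j\ge 0}A^{-j}\bZ^n\subseteq\caR^n$, and then $T\in\M_n(\caR)$ gives $T(G_A)\subseteq\caR^n$; hence $N$ is a subgroup of $\caR^n$ containing $G_A$, and $N/G_A$ is a quotient of $\caR^n/\bZ^n$, so it is a torsion abelian group. The goal is $N=G_A$. Because $\bZ_p$ is flat over $\bZ$ and $\bQ\otimes_\bZ\bZ_p=\bQ_p$, tensoring the inclusions $G_A,T(G_A)\subseteq N\subseteq\bQ^n$ with $\bZ_p$ identifies, compatibly inside $\bQ_p^n$, the submodules $G_A\otimes_\bZ\bZ_p=\overline{G}_{A,p}$, $T(G_A)\otimes_\bZ\bZ_p=T(\overline{G}_{A,p})$ and $N\otimes_\bZ\bZ_p=\overline{G}_{A,p}+T(\overline{G}_{A,p})$. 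By the assumed inclusion the last module is $\overline{G}_{A,p}=G_A\otimes_\bZ\bZ_p$, so applying $-\otimes_\bZ\bZ_p$ to $0\to G_A\to N\to N/G_A\to 0$ (flatness again) yields $(N/G_A)\otimes_\bZ\bZ_p=0$ for every prime $p$.

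It remains to invoke the elementary fact that a torsion abelian group $Q$ with $Q\otimes_\bZ\bZ_p=0$ for all $p$ is trivial: writing $Q=\bigoplus_\ell Q[\ell^\infty]$, each summand $Q[\ell^\infty]$ with $\ell\ne p$ is uniquely $p$-divisible and hence killed by $-\otimes_\bZ\bZ_p$, while $Q[p^\infty]$, being the directed union of its finite ($p$-)subgroups $F$ with $F\otimes_\bZ\bZ_p=F$, satisfies $Q[p^\infty]\otimes_\bZ\bZ_p=Q[p^\infty]$; thus $Q[p^\infty]=0$ for all $p$ and $Q=0$. Applying this to $Q=N/G_A$ gives $N=G_A$, i.e. $T(G_A)\subseteq G_A$. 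The point requiring genuine care — as opposed to the formal manipulations above — is that $G_A$ is usually not finitely generated, which is precisely why the argument is routed through the torsion quotient $N/G_A$ and the identification $Q\otimes_\bZ\bZ_p\cong Q[p^\infty]$ rather than through localizations; a minor secondary point is the identification $\overline{G}_{A,p}=\bQ_p^n$ when $t_p=n$.
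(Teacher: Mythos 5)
Your proof is correct, but the converse is established by a genuinely different mechanism than the paper's. The paper's proof is short because it leans on the identity $G_A=\bigcap_{p\in\cP}(\caR^n\cap\overline{G}_{A,p})$ (cited from Fuchs, \cite[p.~183, Lemma 93.2]{f2}, recorded as \eqref{eq:fuks}): once one knows $\overline{G}_{A,p}=\bQ_p^n$ for $p\in\cP\setminus\cP'$, the inclusion $T(G_A)\subseteq G_A$ follows by intersecting, and only the primes in $\cP$ ever enter. You instead avoid the intersection formula entirely: you pass to $N=G_A+T(G_A)$, use flatness of $\bZ_p$ to identify $N\otimes_{\bZ}\bZ_p$ with $\overline{G}_{A,p}+T(\overline{G}_{A,p})=\overline{G}_{A,p}$ inside $\bQ_p^n$ for \emph{every} prime $p$ (which is why you also need the easy cases $p\notin\cP$ and $p\in\cP\setminus\cP'$, handled the same way as in the paper), conclude $(N/G_A)\otimes_{\bZ}\bZ_p=0$ for all $p$, and finish with the elementary fact that a torsion abelian group $Q$ with $Q\otimes_{\bZ}\bZ_p\cong Q[p^\infty]=0$ for all $p$ vanishes. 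In effect you re-prove, from scratch, exactly the consequence of Fuchs's lemma that the paper imports; your route is longer but self-contained and makes the local-to-global step transparent, while the paper's route is shorter at the cost of an external citation. Two small points of care in your write-up: $N/G_A$ is a quotient of the subgroup $N/\bZ^n$ of $\caR^n/\bZ^n$ (which is all you need for torsionness), and the phrase ``uniquely $p$-divisible hence killed by $-\otimes_{\bZ}\bZ_p$'' uses torsionness of $Q[\ell^\infty]$ as well as divisibility (or, more directly, that it is a directed union of finite $\ell$-groups, each killed by $-\otimes_{\bZ}\bZ_p$); as stated for the primary components this is fine.
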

\begin{proof}
Let $T\in\M_n(\bQ)$. 
By above, the conditions are necessary. We now show that they are sufficient. 
It follows from \cite[p. 183, Lemma 93.2]{f2} that
\bbe\label{eq:fuks}
G_A=\bigcap_{p\in\cP}(\caR^n\cap\overline{G}_{A,p})%\bigcap_{p}\,(\bQ^n\cap\overline{G}_{A,p})=\bigcap_{p\,\vert\det A}(\caR^n\cap\overline{G}_{A,p}).
\ee
(see also \cite[p. 8, Corollary 2.4]{s2} for more detail). Also, $\overline{G}_{A,p}=\bQ_p^n$ for any prime $p\in\cP\backslash\cP'$ by \eqref{eq:zp}.  Thus, $T(\overline{G}_{A,p})\subseteq\overline{G}_{A,p}$ for any $p\in\cP\backslash\cP'$. Therefore, if $T\in\M_n(\caR)$ and 
$T\in\End(\overline{G}_{A,p})$
%$T(\overline{G}_{A,p})\subseteq \overline{G}_{A,p}$ 
for any $p\in\cP'$, then $T\in\End(G_A)$ by \eqref{eq:fuks}.
\end{proof}

\begin{lem}\label{l:easy}
\begin{enumerate}[$(1)$]
\item If $\cP=\emptyset$,  equivalently, $A\in\GL_n(\bZ)$, then 
$$\End({G}_A)=\M_n(\bZ).$$

\item If $\cP'=\emptyset$ and $A\not\in\GL_n(\bZ)$, then $$\End({G}_A)=\M_n(\caR).$$ 
\end{enumerate}
\end{lem}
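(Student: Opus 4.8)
The plan is to derive both parts directly from Lemma \ref{l:simple} together with the structural fact \eqref{eq:zp}, so that no new machinery is needed. For part $(1)$: if $\cP=\emptyset$ then $\det A=\pm 1$, i.e.\ $A\in\GL_n(\bZ)$, so $\caR=\bZ$ and $\M_n(\caR)=\M_n(\bZ)$; moreover $G_A=\bZ^n$ since $A^k\bZ^n=\bZ^n$ for all $k\in\bZ$. Hence $\End(G_A)=\End(\bZ^n)=\M_n(\bZ)$, and one should also check the equivalence ``$\cP=\emptyset\iff A\in\GL_n(\bZ)$'' is just the statement that $\det A$ has no prime divisors iff $\det A=\pm1$.

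For part $(2)$: assume $\cP'=\emptyset$ and $A\notin\GL_n(\bZ)$. The inclusion $\End(G_A)\subseteq\M_n(\caR)$ is already recorded in the text, so it suffices to prove the reverse inclusion $\M_n(\caR)\subseteq\End(G_A)$. By Lemma \ref{l:simple}, a matrix $T\in\M_n(\caR)$ lies in $\End(G_A)$ as soon as $T\in\End(\overline{G}_{A,p})$ for every $p\in\cP'$; but $\cP'=\emptyset$, so this condition is vacuous and every $T\in\M_n(\caR)$ is automatically an endomorphism of $G_A$. Thus $\End(G_A)=\M_n(\caR)$. One can optionally remark that $\cP'=\emptyset$ with $\cP\ne\emptyset$ means $h_A\equiv x^n\pmod p$ for every $p\mid\det A$, i.e.\ $t_p=n$ for all such $p$, so $\overline{G}_{A,p}=\bQ_p^n$ by \eqref{eq:zp} and any rational matrix preserves $\bQ_p^n$ — this is essentially the same observation already used in the proof of Lemma \ref{l:simple} to handle $p\in\cP\setminus\cP'$.

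Since both parts are immediate consequences of Lemma \ref{l:simple} (for part $(2)$) and of the elementary description of $G_A$ when $A$ is unimodular (for part $(1)$), there is no real obstacle here; the only point requiring a word of care is making the trivial equivalences precise — namely that $\cP=\emptyset\iff\det A=\pm1\iff A\in\GL_n(\bZ)$, and that $\caR=\bZ$ in that case — and noting explicitly that the hypothesis $A\notin\GL_n(\bZ)$ in $(2)$ is what guarantees $\caR\supsetneq\bZ$ so that the statement is not subsumed by $(1)$.
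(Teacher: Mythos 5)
Your proposal is correct and follows essentially the same route as the paper: part $(1)$ reduces to $\cP=\emptyset\iff\det A=\pm1\iff G_A=\bZ^n$ so that $\End(G_A)=\M_n(\bZ)$, and part $(2)$ is exactly the observation that with $\cP'=\emptyset$ the local condition in Lemma \ref{l:simple} is vacuous, so $\End(G_A)=\M_n(\caR)$. Your added remark about $t_p=n$ and $\overline{G}_{A,p}=\bQ_p^n$ is just an unpacking of what is already inside the proof of Lemma \ref{l:simple}, not a different argument.
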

\begin{proof}
Clearly, $\cP=\emptyset$ if and only if $\det A=\pm 1$ if and only if $G_A=\bZ^n$, and Lemma \ref{l:easy} (1) is clear. 
Lemma \ref{l:easy} (2) follows from Lemma \ref{l:simple}.
\end{proof}

Thus, by Lemma \ref{l:easy}, for the rest of the paper we assume 
 $A\not\in\GL_n(\bZ)$ and $\cP'\ne\emptyset$. 

\subsection{Eigenvectors}\label{ss:eigen} In practice, to apply Lemma \ref{l:simple},  one needs a basis for the decomposition \eqref{eq:zp}. In \cite{s2}, we show that a divisible part of $\overline{G}_{A,p}$ (isomorphic to $\bQ_p^{t_p}$)
can be described by generalized eigenvectors of $A$ and to treat a reduced part of $\overline{G}_{A,p}$ (isomorphic to $\bZ_p^{n-t_p}$) we need a characteristic of $G_A$. For the reader's convenience,  we recall those results 
 and put them together in a criterion for $T\in\M_n(\bQ)$ to be an endomorphism of $G_A$ (Theorem \ref{th:prelim0} below). We first introduce notation. 

\sbr

Throughout the text, $\overline{\bQ}$ denotes a fixed algebraic closure of $\bQ$. 
Let $F\subset \overline{\bQ}$ be a finite extension of $\bQ$ that contains all the eigenvalues of $A$. Let $\cO_F$ denote the ring of integers of $F$.  Throughout the paper,  
$\la_1,\ldots,\la_n\in\cO_F$ denote (not necessarily distinct) eigenvalues of $A$ and 
$\{{\bf u}_1,\ldots,{\bf u}_n\}$ denotes a Jordan canonical basis of $A$. 
%corresponding to $\la_1,\ldots,\la_n$, respectively. 
Without loss of generality, we can assume that each
${\bf u}_i\in(\cO_F)^n$, $i=1,\ldots,n$. 
 For a prime $p\in\bN$ let $\fp$ be a prime ideal of $\cO_F$ above $p$ and let $\cX_{A,\fp}$ %(resp., $Y_p$) 
denote the span over $F$ of vectors in $\{{\bf u}_1,\ldots,{\bf u}_n\}$ 
%(resp., of $B$) 
corresponding to eigenvalues divisible by $\fp$. 
%Note that for any prime ideals $\fp_1$, $\fp_2$ of $\cO_K$ above $p$ there exists $\sg\in\Gal(K/\bQ)$ such that $\sg(\fp_1)=\fp_2$ and hence 
%$$
%\sg(X_{A,\fp_1})=X_{A,\fp_2}.
%$$ 
%In particular, the dimension of $X_{A,\fp_1}$ over $K$ does not depend on the choice of 
%$\fp$ above $p$. 
%Let $t_p(A)$ denote the multiplicity of zero in the reduction of the characteristic polynomial of $A$ modulo $p$.
Note that 
%if $A$ is diagonalizable ({\it e.g.}, if the characteristic polynomial $h_A$ of $A$ is irreducible), then
$$
\dim_F \cX_{A,\fp}=t_p,
$$
where $t_p=t_p(A)$ denotes the multiplicity of zero in the reduction 
$\bar{h}_A$ modulo $p$ of the characteristic polynomial $h_A$ of $A$, $0\leq t_p\leq n$. Indeed, 
$\dim_F \cX_{A,\fp}$ is the number of 
eigenvalues (with multiplicities) of $A$ divisible by $\fp$. One can write $h_A=(x-\la_1)\cdots (x-\la_n)$ over 
$\cO_F$. Considering the reduction $\bar{h}_A$ of $h_A$ modulo $\fp$, we see that the number of eigenvalues of $A$ divisible by $\fp$ is equal to the multiplicity 
$t_p$ of zero in $\bar{h}_A$. Equivalently, $\cX_{A,\fp}$ is generated over $F$ by generalized 
$\la$-eigenvectors of $A$ for any eigenvalue $\la$ of $A$ divisible by $\fp$.

\sbr

 Let $p\in\bN$ be a prime and let $a\in\bZ_p$, $a=\sum_{i=0}^{\infty}a_ip^i$, $\forall a_i\in\{0,1,\ldots,p-1\}$. For $k\geq 1$, we denote  
$$
a^{(k)}=a_0+a_1p+\cdots +a_{k-1}p^{k-1}\in\bZ.
$$ 
Similarly, for ${\bf x}=\left(\begin{matrix} x_1 & \ldots & x_n \end{matrix}\right)\in\bZ_p^n$, 
$x_1,\ldots, x_n\in\bZ_p$, and $k\geq 1$, we denote
$$
{\bf x}^{(k)}=\left(\begin{matrix} x_1^{(k)} & \ldots & x_n^{(k)} \end{matrix}\right)\in\bZ^n.
$$
Finally, for ${\bf x}\in\bZ_p^n$,  we denote  
$$
p^{-\infty}{\bf x}=\{p^{-k}{\bf x}^{(k)}\,\vert\, \,k\in\bN  \}\subset\bQ^n.
$$

%Let $K_{\fp}$ denote the completion of $K$ with respect to $\fp$ and let $\cO_{\fp}$ be the ring of integers of $K_{\fp}$. 

\begin{lem}\label{lem:gen-n}
%Let $A\in\M_n(\bZ)$ be non-singular and let $K$ be a number field. 
There exists a basis 
$\{{\bf f}_1,\ldots, {\bf f}_n\}$ of $\bZ^n$ such that for any $p\in\cP'$ there are $\al_{pij}\in\bZ_{p}$, 
$i\in\{1,\ldots,t_p\}$, $j\in\{t_p+1,\ldots,n\}$, and $G_A$ is generated over $\bZ$ by
%\bbe\label{eq:gener-n0}
$$
\{{\bf f}_1,\ldots,{\bf f}_n, q^{-\infty}{\bf f}_1, \ldots, q^{-\infty}{\bf f}_n, 
p^{-\infty}{\bf x}_{pi}\,\,\vert\,\, p\in\cP',\,\, q\in\cP\backslash\cP',\,\, 1\leq i\leq t_p\},
$$
%\ee
where 
%\begin{eqnarray}\label{eq:ppp1}
$$
 {\bf x}_{pi}={\bf f}_i+\sum_{j=t_p+1}^n\al_{pij}{\bf f}_j. 
$$
%\end{eqnarray}
%and the elements inside $<\ldots>$ are generators of $G_A$ over $\bZ$.
\end{lem}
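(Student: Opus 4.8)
The statement packages together a structural description of $G_A$ over $\bZ$, combining the behaviour at primes $p \in \cP'$ (where $\overline{G}_{A,p} \cong \bQ_p^{t_p} \oplus \bZ_p^{n-t_p}$ has a nontrivial reduced part) with the behaviour at primes $q \in \cP \setminus \cP'$ (where $\overline{G}_{A,q} = \bQ_q^n$ is fully divisible). The core tool is the localization description \eqref{eq:fuks}, $G_A = \bigcap_{p\in\cP}(\caR^n \cap \overline{G}_{A,p})$, so the task reduces to finding a single $\bZ$-basis $\{{\bf f}_1,\ldots,{\bf f}_n\}$ of $\bZ^n$ that simultaneously diagonalizes, in the appropriate sense, each local picture \eqref{eq:zp}.

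The plan is as follows. First I would treat each $p \in \cP'$ separately. By \eqref{eq:zp} the $\bZ_p$-module $\overline{G}_{A,p}$ contains $\bZ_p^n$ with divisible quotient of rank $t_p$; concretely $\overline{G}_{A,p} = \bZ_p^n + \sum_{i=1}^{t_p} \bQ_p {\bf v}_{pi}$ for suitable vectors ${\bf v}_{pi}$ spanning (together with $\bZ_p^n$) the divisible part. Using the elementary divisor / Smith normal form theory over $\bZ_p$ applied to the lattice $\bZ_p^n$ inside the $\bZ_p$-span of the ${\bf v}_{pi}$'s, one can choose a $\bZ_p$-basis $\{{\bf e}_{p1},\ldots,{\bf e}_{pn}\}$ of $\bZ_p^n$ so that the divisible part is $\sum_{i=1}^{t_p} \bQ_p {\bf e}_{pi}$, i.e. a basis in which the first $t_p$ coordinates become $p$-divisible and the last $n-t_p$ stay $p$-integral. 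The element ${\bf x}_{pi} = {\bf f}_i + \sum_{j>t_p}\al_{pij}{\bf f}_j$ is exactly the $\bZ$-approximation obtained by expressing ${\bf e}_{pi}$ in terms of a fixed global $\bZ$-basis and then describing its $p^{-\infty}$-orbit via the $p$-adic digit truncation $p^{-\infty}(\cdot)$ introduced just before the lemma; the coefficients $\al_{pij} \in \bZ_p$ are the off-diagonal entries of the change-of-basis matrix. The key point to verify here is that $p^{-\infty}{\bf x}_{pi}$ generates, over $\bZ_p$, the $i$-th divisible summand $\bQ_p {\bf e}_{pi}$ modulo $\bZ_p^n$ — this is the standard fact that $\bZ[p^{-\infty}{\bf y}] + \bZ^n$ recovers a $\bZ_p$-point with divisible first coordinate, and it is where the digit-truncation machinery earns its keep.

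Next I would reconcile the finitely many choices $\{{\bf e}_{p\bullet}\}_{p\in\cP'}$ into one global basis $\{{\bf f}_1,\ldots,{\bf f}_n\}$ of $\bZ^n$. Since the primes in $\cP'$ are distinct, the change-of-basis matrices $C_p \in \GL_n(\bZ_p)$ can be approximated simultaneously by a single matrix $C \in \GL_n(\bZ)$: by strong approximation (Chinese Remainder Theorem for $\bZ$), choose $C \equiv C_p \pmod{p^{M_p}}$ for each $p\in\cP'$ with $M_p$ large enough that the congruence class of $C_p$ modulo $p^{M_p}$ already determines which coordinates are $p$-divisible in $\overline{G}_{A,p}$, and with $\det C$ a $p$-adic unit for all $p \in \cP'$ (arrange $\det C \equiv \det C_p \not\equiv 0 \pmod p$), hence — after also clearing the finitely many other primes, or simply by choosing $\det C = \pm 1$ via a further correction supported away from $\cP$ — $C \in \GL_n(\bZ)$. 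Set ${\bf f}_i = C{\bf e}_i$. Then $\{{\bf f}_1,\ldots,{\bf f}_n\}$ is a $\bZ$-basis of $\bZ^n$, and for each $p\in\cP'$ it has the required local diagonalizing property, so the $\al_{pij}$ and ${\bf x}_{pi}$ are defined as above. Finally, for $q \in \cP\setminus\cP'$ we have $\overline{G}_{A,q} = \bQ_q^n$, which is generated over $\bZ$ together with $\bZ_q^n$ by $\{q^{-\infty}{\bf f}_1,\ldots,q^{-\infty}{\bf f}_n\}$ for \emph{any} $\bZ$-basis — so those generators are thrown in wholesale.

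Assembling: the group generated over $\bZ$ by $\{{\bf f}_i\} \cup \{q^{-\infty}{\bf f}_i : q\in\cP\setminus\cP'\} \cup \{p^{-\infty}{\bf x}_{pi} : p\in\cP',\, i\le t_p\}$ localizes, at each $p\in\cP'$, to $\bZ_p^n + \sum_{i\le t_p}\bQ_p{\bf e}_{pi} = \overline{G}_{A,p}$, and at each $q\in\cP\setminus\cP'$ to $\bQ_q^n = \overline{G}_{A,q}$; at all other primes it localizes to $\bZ_\ell^n$ (since every generator lies in $\caR^n$, and $\caR \otimes \bZ_\ell = \bZ_\ell$ for $\ell\notin\cP$). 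By \eqref{eq:fuks} this intersection of local conditions is exactly $G_A$, and since every listed generator visibly lies in $G_A$, we get equality. \textbf{The main obstacle} I anticipate is the bookkeeping in the simultaneous-approximation step: one must be careful that the single integral matrix $C$ genuinely induces, at each $p\in\cP'$, a basis in which the divisible part is spanned by the first $t_p$ vectors (this needs the congruence modulo a high enough power of $p$, not just modulo $p$), while keeping $C$ unimodular over $\bZ$; verifying that the truncation operator $p^{-\infty}(\cdot)$ behaves correctly under this integral change of basis (so that $p^{-\infty}{\bf x}_{pi}$ really captures $\bQ_p{\bf e}_{pi}$) is the delicate technical heart, and it is essentially the content already extracted from \cite{s2}, so I would cite the relevant lemma there rather than redo it.
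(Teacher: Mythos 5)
The paper never proves Lemma \ref{lem:gen-n}: it is imported wholesale from the characteristic machinery of \cite{gm} and \cite{s2}, so there is no in-paper argument to compare yours against. Your reconstruction (adapt a $\bZ_p$-basis at each $p\in\cP'$ so the divisible part $D_p$ of $\overline{G}_{A,p}$ is spanned by the first $t_p$ vectors, glue the finitely many local bases into one $\bZ$-basis, throw in $q^{-\infty}{\bf f}_i$ at the primes $q\in\cP\setminus\cP'$ where $\overline{G}_{A,q}=\bQ_q^n$, and verify generation through \eqref{eq:fuks} by comparing $p$-primary parts of the quotient by $\bZ^n$) is the natural route and its skeleton is sound. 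Two of your worries are in fact non-issues: congruence of $C$ with $C_p$ modulo $p$ alone already suffices, because it makes the top $t_p\times t_p$ block of $C^{-1}C_p$ invertible over $\bZ_p$, and multiplying on the left by the inverse of that block is exactly what produces the normalized generators ${\bf x}_{pi}={\bf f}_i+\sum_{j>t_p}\al_{pij}{\bf f}_j$ with $\al_{pij}\in\bZ_p$ (so the $\al_{pij}$ are entries of this normalized matrix, not directly of the change-of-basis matrix); and the truncation operator only matters modulo $p^k$, so it is insensitive to which coordinates you use.

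The genuine gap is in the gluing step. You propose to pick any simultaneous CRT approximation $C\equiv C_p\pmod{p^{M_p}}$ and then force unimodularity afterwards, ``by a further correction supported away from $\cP$'' or by arranging $\det C\equiv\det C_p\not\equiv 0\pmod p$. This cannot work as stated: membership in $\GL_n(\bZ)$ forces $\det C=\pm 1$ exactly, while the congruences force $\det C\equiv\det C_p\pmod{p^{M_p}}$, and the local determinants $\det C_p\in\bZ_p^{\times}$ produced by the adaptation need not be $\equiv\pm1$; when they are not, no integral unimodular $C$ in that congruence class exists, and no correction at primes outside $\cP$ can change $\det C$ away from $\pm1$-values. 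The repair is easy but must come \emph{before} approximating: rescale one vector of each local adapted basis by the unit $(\det C_p)^{-1}$ (this preserves the property that the first $t_p$ vectors span $D_p$ and that one has a $\bZ_p$-basis of $\bZ_p^n$), so that $C_p\in\operatorname{SL}_n(\bZ_p)$, and then use the surjectivity of the reduction $\operatorname{SL}_n(\bZ)\to\operatorname{SL}_n(\bZ/N)$, $N=\prod_{p\in\cP'}p$, to find a single $C\in\operatorname{SL}_n(\bZ)$ with $C\equiv C_p\pmod p$ for all $p\in\cP'$. With that fix, and with the containment check that each $p^{-k}{\bf x}_{pi}^{(k)}$ lies in $G_A$ (it does, since ${\bf x}_{pi}\in D_p$ and ${\bf x}_{pi}^{(k)}\equiv{\bf x}_{pi}\pmod{p^k\bZ_p^n}$), your outline closes up into a complete proof.
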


\begin{defin}
Let $\{{\bf f}_1,\ldots, {\bf f}_n\}$ and $\al_{pij}\in\bZ_p$ be as in  Lemma \ref{lem:gen-n}.
The set
$$
M(A;{\bf f}_1,\ldots, {\bf f}_n)=\{\al_{pij}\in\bZ_p\,\vert\,p\in\cP',\,1\leq i\leq t_p<j\leq n\}
$$
is called a {\it characteristic} of $G_A$ relative to the ordered basis $\{{\bf f}_1,\ldots, {\bf f}_n\}$ \cite{gm}. 
\end{defin}

%Let $A\in\M_n(\bZ)$ be non-singular and let $M(A;{\bf f}_1,\ldots, {\bf f}_n)$ be the characteristic of $G_A$ relative to a free $\bZ$-basis $\{{\bf f}_1,\ldots, {\bf f}_n\}$ of $\bZ^n$. 
By conjugating $A$ by a matrix $S\in\GL_n(\bZ)$, without loss of generality, we can assume that we have a characteristic of $G_A$ relative to the standard basis $\{{\bf e}_1,\ldots, {\bf e}_n\}$ 
%Indeed, if 
%$\{{\bf g}_1,\ldots, {\bf g}_n\}$ is another basis of $\bZ^n$ and let
%$S\in\GL_n(\bZ)$ be a change-of-basis matrix: $S{\bf f}_i={\bf g}_i$, 
%$1\leq i\leq n$. Then $S(G_A)=G_{SAS^{-1}}$, $\cP'(A)=\cP'(SAS^{-1})$, $t_p(A)=t_p(SAS^{-1})$, and
%$$
%M(SAS^{-1};{\bf g}_1,\ldots, {\bf g}_n)=M(A;{\bf f}_1,\ldots, {\bf f}_n)
%$$
\cite[Lemma 3.8]{s2}.
The next theorem gives a necessary and sufficient criterion for $T\in\M_n(\bQ)$ to be an endomorphism of $G_A$, equivalently, $T(G_A)\subseteq G_A$. The proof follows easily from the proof of 
\cite[Theorem 4.3]{s2}, which gives a criterion for when $T(G_A)=G_B$ for a non-singular $B\in\M_n(\bZ)$. 
Theorem \ref{th:prelim0} works well in practice, since there is an algorithm to produce a characteristic of $G_A$ out of generalized eigenvectors of $A$ (see \cite[Remark 4.5]{s2} and \cite{gm}).

\begin{thm}\label{th:prelim0}
Let $A\in\M_n(\bZ)$ be non-singular, $\cP'\ne\emptyset$, let $F\subset \overline{\bQ}$ be any finite  extension of $\bQ$ that contains all the eigenvalues of $A$, and assume $G_A$ has a characteristic 
%\label{eq:char}
$$
M(A;{\bf e}_1,\ldots, {\bf e}_n)=\{\al_{pij}\in\bZ_p\,\vert\,p\in\cP',\,1\leq i\leq t_p(A)<j\leq n\}. 
$$
For $T\in\M_n(\bQ)$, we have that $T(G_A)\subseteq G_A$ if and only if 
 $T\in\M_n(\caR)$, 
for any $p\in\cP'$ and a prime ideal $\fp$ of $\cO_F$ above $p$
we have that
\bbe\label{eq:class0}
 T(\cX_{A,\fp})\subseteq \cX_{A,\fp},
\ee
and any $j$-th column $\left(\begin{matrix} \ga_{1j} & \ldots & \ga_{nj} \end{matrix}\right)$ of $T$, 
$j\in\{t_p+1,\ldots,n\}$, satisfies
\bbe\label{eq:apcond}
\ga_{kj}-\sum_{i=1}^{t_p}\ga_{ij}\al_{pik}\in\bZ_p  \text{ for any } k\in\{t_p+1,\ldots,n\}.
\ee
\end{thm}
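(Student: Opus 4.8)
The plan is to derive Theorem \ref{th:prelim0} directly from Lemma \ref{l:simple} by translating the conditions ``$T\in\End(\overline{G}_{A,p})$ for $p\in\cP'$'' into the eigenvector and characteristic conditions \eqref{eq:class0} and \eqref{eq:apcond}. By Lemma \ref{l:simple}, $T\in\M_n(\bQ)$ lies in $\End(G_A)$ iff $T\in\M_n(\caR)$ and $T(\overline{G}_{A,p})\subseteq\overline{G}_{A,p}$ for every $p\in\cP'$, so it suffices to fix such a $p$ and show that $T(\overline{G}_{A,p})\subseteq\overline{G}_{A,p}$ is equivalent to the pair of conditions in the theorem. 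First I would invoke Lemma \ref{lem:gen-n}: relative to the standard basis, $\overline{G}_{A,p}$ is generated over $\bZ_p$ by $\{{\bf e}_1,\ldots,{\bf e}_n\}$ together with the elements $p^{-\infty}{\bf x}_{pi}$ for $1\le i\le t_p$, where ${\bf x}_{pi}={\bf e}_i+\sum_{j=t_p+1}^n\al_{pij}{\bf e}_j$. Hence $T(\overline{G}_{A,p})\subseteq\overline{G}_{A,p}$ holds iff $T{\bf e}_j\in\overline{G}_{A,p}$ for all $j$ (automatic once $T\in\M_n(\caR)$, since $\bZ_p^n\subseteq\overline{G}_{A,p}$ and $\caR\subseteq\bZ_p$ for $p\nmid\det A$... but here $p\mid\det A$, so one still needs $T{\bf e}_j\in\overline{G}_{A,p}$, which is part of what must be checked) and $T(p^{-\infty}{\bf x}_{pi})\subseteq\overline{G}_{A,p}$ for each $i$.

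The geometric heart of the argument is the identification of the divisible part of $\overline{G}_{A,p}$. Using \eqref{eq:zp} and the description in Section \ref{ss:eigen}, the maximal divisible subgroup of $\overline{G}_{A,p}$ is a $\bQ_p$-subspace of $\bQ_p^n$ of dimension $t_p$, and after extending scalars to $\cO_F$ (or rather $\cO_F\otimes_\bZ\bZ_p=\prod_{\fp\mid p}\cO_\fp$) it is spanned by the generalized $\la$-eigenvectors of $A$ for eigenvalues $\la$ divisible by $\fp$ — that is, by $\cX_{A,\fp}$ for the chosen $\fp$ above $p$. Since the divisible subgroup is canonical, $T$ must carry it into itself; passing to $\cO_F$-coefficients and using that $T$ has rational (hence $\fp$-rational) entries, this says precisely $T(\cX_{A,\fp})\subseteq\cX_{A,\fp}$, which is \eqref{eq:class0}. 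I expect the main obstacle here is the bookkeeping of the faithfully-flat descent between the $\bZ_p$-picture and the $\cO_F$-picture: one must check that the condition $T(\cX_{A,\fp})\subseteq\cX_{A,\fp}$, stated over $F$, is equivalent to $T$ preserving the $\bQ_p$-rational divisible part, and that independence of the choice of $\fp$ above $p$ holds — but this is exactly the content already extracted in the proof of \cite[Theorem 4.3]{s2}, so I would cite it rather than reprove it.

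It remains to analyze the reduced part, i.e.\ to unwind $T(p^{-\infty}{\bf x}_{pi})\subseteq\overline{G}_{A,p}$. Writing $T$ in columns and using that $p^{-k}{\bf x}_{pi}^{(k)}\to p^{-\infty}{\bf x}_{pi}$ approximates the ``fractional'' generator, one computes $T{\bf x}_{pi}=T{\bf e}_i+\sum_{j>t_p}\al_{pij}T{\bf e}_j$; its $k$-th coordinate is $\ga_{ki}+\sum_{j>t_p}\al_{pij}\ga_{kj}$. Membership of the associated $p$-divisible element in $\overline{G}_{A,p}$, after quotienting out the already-controlled divisible part $\cX_{A,\fp}$ (equivalently, looking only at the coordinates $k\in\{t_p+1,\ldots,n\}$ in the decomposition of Lemma \ref{lem:gen-n}), reduces to requiring that these coordinates lie in $\bZ_p$ plus the span of the original generators — which, after the linear change accounting for the cross terms $\al_{pik}$, is exactly the congruence \eqref{eq:apcond}. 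The converse is the same computation read backwards: if $T\in\M_n(\caR)$, \eqref{eq:class0} holds, and \eqref{eq:apcond} holds, then $T$ sends each listed generator of $G_A$ from Lemma \ref{lem:gen-n} back into $G_A$, hence $T(G_A)\subseteq G_A$. Throughout, the one delicate point to state carefully is that the ``mod $\bZ_p$'' nature of \eqref{eq:apcond} is precisely what makes the $p^{-\infty}$-generators land back in the subgroup generated by $\bZ^n$ and the $p^{-\infty}{\bf x}_{pi}$'s; I would make this explicit by reducing to the structure \eqref{eq:zp} and the basis of Lemma \ref{lem:gen-n} rather than arguing in $\bQ^n$ directly.
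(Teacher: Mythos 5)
Your overall architecture is the right one, and it is essentially what the paper intends (the paper itself only says the statement follows from the proof of \cite[Theorem 4.3]{s2}): reduce via Lemma \ref{l:simple} to the single condition $T(\overline{G}_{A,p})\subseteq\overline{G}_{A,p}$ for $p\in\cP'$, use Lemma \ref{lem:gen-n} so that $\overline{G}_{A,p}=\bZ_p^n+D_p$ with $D_p=\Span_{\bQ_p}({\bf x}_{p1},\ldots,{\bf x}_{pt_p})$, and identify $D_p$ (after extension of scalars) with $\cX_{A,\fp}$ by \cite[Lemma 4.1]{s2}, so that preservation of the maximal divisible submodule is \eqref{eq:class0}. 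Up to that point your sketch is fine.

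The gap is in your third paragraph, where \eqref{eq:apcond} is derived from the wrong generators. The condition $T(p^{-\infty}{\bf x}_{pi})\subseteq\overline{G}_{A,p}$ does not, after ``quotienting out the divisible part,'' produce the congruence \eqref{eq:apcond}: since $\bigcap_k p^k\overline{G}_{A,p}=D_p$, that condition is equivalent to $T{\bf x}_{pi}\in D_p$ for all $i\leq t_p$, i.e.\ it is just a restatement of \eqref{eq:class0} and imposes an exact linear condition, not a mod-$\bZ_p$ one. Moreover the quantity you compute, $\ga_{ki}+\sum_{j>t_p}\al_{pij}\ga_{kj}$, is not the expression appearing in \eqref{eq:apcond}. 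The congruence actually comes from the remaining generators ${\bf e}_j$ with $j>t_p$: one needs $T{\bf e}_j\in\overline{G}_{A,p}=\bZ_p^n+D_p$, and subtracting $\sum_{i=1}^{t_p}\ga_{ij}{\bf x}_{pi}$ from $T{\bf e}_j$ kills the first $t_p$ coordinates and leaves, in coordinate $k>t_p$, exactly $\ga_{kj}-\sum_{i=1}^{t_p}\ga_{ij}\al_{pik}$; membership in $\bZ_p^n$ of this difference is \eqref{eq:apcond}, and since changing the coefficients $\ga_{ij}$ by elements of $\bZ_p$ only perturbs the expression within $\bZ_p$, the equivalence goes both ways. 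As written, your criterion never imposes anything on $T(\bZ_p^n)$, so the sufficiency direction would fail. Finally, for sufficiency you also need the (easy, but omitted) observation that no condition is required on columns $j\leq t_p$: writing ${\bf e}_j={\bf x}_{pj}-\sum_{k>t_p}\al_{pjk}{\bf e}_k$ with $\al_{pjk}\in\bZ_p$, the containment $T{\bf e}_j\in\overline{G}_{A,p}$ follows from \eqref{eq:class0} together with \eqref{eq:apcond} for the columns $j>t_p$ and the fact that $\overline{G}_{A,p}$ is a $\bZ_p$-module.
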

\subsection{$2$-dimensional case}\label{ss:two}
For $n\in\bZ$, $n\ne\pm1 $, let $\rad(n)\in\bN$ be the product of all distinct prime divisors $p\in\bN$ of $n$.

\sbr 

If $n=2$, then there are three cases distinguished in \cite{s1}: \\

{\bf (a)} the characteristic polynomial $h_A\in\bZ[x]$ of $A$ is irreducible (equivalently, $A$ has no rational eigenvalues), \\

{\bf (b)} $h_A$ is reducible  (equivalently, $A$ has eigenvalues $\la_1,\la_2\in\bZ$), $\rad(\la_1)$ does not divide $\rad(\la_2)$, and $\rad(\la_2)$ does not divide $\rad(\la_1)$, \\

{\bf (c)} $h_A$ is reducible and every prime dividing one eigenvalue divides the other, {\it e.g.}, $\rad(\la_2)$ divides $\rad(\la_1)$ (denoted by 
$\rad(\la_2)\,\vert \rad(\la_1)$). \\

{\bf Case (a)} is treated in Section \ref{ss:22irr} below. \\ 

{\bf Case (b)}.
Note that if $n=2$ and $\cP'\ne\emptyset$, then $\det A\ne\pm 1$, $A$ has distinct eigenvalues $\la_1,\la_2\in\bZ$, and hence $A$ is diagonalizable over $\bQ$. Moreover, there exists 
$S\in\operatorname{GL}_2(\bZ)$ such that $SAS^{-1}=M\La M^{-1}$, where
\bbe\label{eq:dim2}
\La=\left(\begin{matrix}
\la_1 & 0 \\
0 & \la_2 
\end{matrix}
\right), \quad 
M=\left(\begin{matrix}
1 & u \\
0 & v 
\end{matrix}
\right),
\,\, \la_1,\la_2,u,v\in \bZ,\,\,(u,v)=1,\,\,v\,\vert\,(\la_1- \la_2),
\ee
where $(u,v)=1$ means that $u,v$ are coprime 
\cite[Corollary A.2]{s1}. Since $S(G_{A})=G_{SAS^{-1}}$, {\it i.e.}, 
$G_A$, $G_{SAS^{-1}}$ are isomorphic, without loss of generality, we can assume that $A$ itself is upper-triangular and has the form $A=M\La M^{-1}$.
\begin{thm}\label{th:2dimredb}
Assume $A=M\La M^{-1}$, where $M,\La$ are given by $\eqref{eq:dim2}$, and $\cP'\ne\emptyset$. Assume case $(b)$, i.e., $\rad(\la_1)$ does not divide $\rad(\la_2)$, and $\rad(\la_2)$ does not divide $\rad(\la_1)$. Then $T\in\End(G_A)$ if and only if
\bbe\label{eq:T}
T=MXM^{-1},\quad X=\diag\left(\begin{matrix} x_1 & x_2\end{matrix}\right),
\ee
where $x_i\in\bZ[\la_i^{-1}]$, $i=1,2$, and $\frac{x_1-x_2}{v}\in\caR$.
In particular, $\End(G_A)$ is commutative, isomorphic to a subring of $\bZ[\la_1^{-1}]\times \bZ[\la_2^{-1}]$, and lies inside the centralizer of $A$ in $\M_2(\caR)$.
\end{thm}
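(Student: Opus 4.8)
The plan is to apply Theorem \ref{th:prelim0} to the upper-triangular matrix $A=M\La M^{-1}$, for which a characteristic relative to the standard basis can be read off from the eigenvectors. First I would identify the generalized eigenvectors of $A$: the $\la_1$-eigenvector is ${\bf e}_1=(1,0)^t$ and the $\la_2$-eigenvector is the second column of $M$, namely ${\bf u}=(u,v)^t$. In case (b), since $\rad(\la_1)\nmid\rad(\la_2)$ and vice versa, for each prime $p\in\cP'$ exactly one of $\la_1,\la_2$ is divisible by $p$ (if $p\mid\det A=\la_1\la_2$ divided both, then $p\mid\rad(\la_1)$ and $p\mid\rad(\la_2)$, contradicting case (b); and $p\in\cP'$ forces $p$ to divide at least one eigenvalue). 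Hence $t_p=1$ for all $p\in\cP'$, and $\cX_{A,\fp}$ is the one-dimensional span of whichever eigenvector corresponds to the eigenvalue divisible by $p$. So condition \eqref{eq:class0} says precisely that $T$ preserves \emph{both} eigenlines (the one spanned by ${\bf e}_1$, for primes dividing $\la_1$, and the one spanned by ${\bf u}$, for primes dividing $\la_2$ — and there is at least one prime of each type unless one of the $\la_i=\pm1$, which is the degenerate sub-case to check separately but yields the same conclusion). A matrix preserving two distinct lines over $\bQ$ is diagonalized by $M$: thus $T=MXM^{-1}$ with $X=\diag(x_1,x_2)$, $x_i\in\bQ$.

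Next I would pin down the arithmetic constraints on $x_1,x_2$. The condition $T\in\M_2(\caR)$ together with \eqref{eq:apcond} translates, after conjugating back by $M$, into local conditions on the $x_i$ at each prime $p\in\cP'$. Concretely, $T=MXM^{-1}$ has the explicit form $\begin{pmatrix} x_1 & u(x_2-x_1)/v \\ 0 & x_2 \end{pmatrix}$ when $\La,M$ are as in \eqref{eq:dim2} — so $T\in\M_2(\caR)$ already forces $x_1,x_2\in\caR$ and $u(x_2-x_1)/v\in\caR$, hence (using $(u,v)=1$) $(x_1-x_2)/v\in\caR$. Then I need to upgrade $x_i\in\caR$ to $x_i\in\bZ[\la_i^{-1}]$: this is where the characteristic and \eqref{eq:apcond} do the real work. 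For a prime $p\mid\la_1$ but $p\nmid\la_2$, the divisible part of $\overline G_{A,p}$ is the $\la_1$-line, and preservation of $G_A$ locally at $p$ forces the entry of $T$ acting on the $\bZ_p$-reduced direction (the $\la_2$-direction) to be $p$-integral — i.e. $x_2\in\bZ_p$ — while the divisible direction allows $x_1$ to have a pole at $p$. Running this over all $p\in\cP'$ and recalling that $\bZ[\la_i^{-1}]=\{x\in\caR\mid \val_p(x)\ge 0\ \forall p\nmid\la_i\}$ gives exactly $x_i\in\bZ[\la_i^{-1}]$. Conversely, any $T$ of the stated form satisfies all three conditions of Theorem \ref{th:prelim0}, so lies in $\End(G_A)$; this direction is a direct substitution check.

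Finally, for the structural conclusions: the map $T=MXM^{-1}\mapsto(x_1,x_2)$ is a ring homomorphism from $\End(G_A)$ into $\bZ[\la_1^{-1}]\times\bZ[\la_2^{-1}]$ (addition and multiplication of diagonal matrices are componentwise), and it is injective since $T$ determines $X$. Hence $\End(G_A)$ is commutative and isomorphic to its image, the subring $\{(x_1,x_2)\mid x_i\in\bZ[\la_i^{-1}],\ (x_1-x_2)/v\in\caR\}$. That $\End(G_A)$ lies in the centralizer of $A$ in $\M_2(\caR)$ follows because $A=MXM^{-1}$ with $X=\La$ diagonal, and any two matrices simultaneously diagonalized by $M$ commute.

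The main obstacle I anticipate is the precise bookkeeping in the second paragraph: correctly matching, for each $p\in\cP'$, which coordinate direction is the divisible ($\bQ_p$) part and which is the reduced ($\bZ_p$) part after the change of basis by $M$, and thereby deriving the sharp valuation conditions $x_i\in\bZ[\la_i^{-1}]$ rather than merely $x_i\in\caR$ — i.e. showing that poles of $x_i$ are confined to primes dividing $\la_i$. Everything else (the case $t_p=1$ geometry, the substitution check for sufficiency, the ring-theoretic corollaries) is routine. One also must handle the boundary case where, say, $\la_1=\pm1$ so that $\cP'$ contains only primes dividing $\la_2$; then there is no local condition forcing $x_1$ to be integral anywhere, consistent with $\bZ[\la_1^{-1}]=\bZ[1]=\caR\cap(\text{no constraint})$ — actually $\bZ[(\pm1)^{-1}]=\bZ$, and one checks the statement still reads correctly since then $v\mid(\la_1-\la_2)$ and the divisibility $(x_1-x_2)/v\in\caR$ absorbs the remaining freedom; I would note this case explicitly but expect no genuine difficulty.
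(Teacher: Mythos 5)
Your proposal is correct and follows essentially the same route as the paper's proof: condition \eqref{eq:class0}, applied at a prime in $\cP'$ dividing only $\la_1$ and at one dividing only $\la_2$ (both exist by case (b)), forces $T=MXM^{-1}$ with $X$ diagonal, and then the explicit form of $MXM^{-1}$ together with the local structure $\overline{G}_{A,p}\cong\bQ_p\oplus\bZ_p$ and $M\in\GL_2(\bZ_p)$ for $p\in\cP'$ (such $p$ cannot divide $\la_1-\la_2$, while $v\mid(\la_1-\la_2)$) yields $x_i\in\bZ[\la_i^{-1}]$ and $\frac{x_1-x_2}{v}\in\caR$, with the converse and the ring-theoretic conclusions routine, just as in the paper. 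One cosmetic correction: a prime dividing both $\la_1$ and $\la_2$ does not contradict case (b) (take $\la_1=6$, $\la_2=10$); the reason no $p\in\cP'$ divides both eigenvalues is simply that $h_A\equiv x^2\pmod p$ would then hold, and likewise the degenerate sub-case $\la_i=\pm1$ you set aside cannot occur under hypothesis (b), so neither point affects your argument.
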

\begin{proof}
Assume $T\in\End(G_A)$. Note that $t_p=1$ for any $p\in\cP'$. Thus, in the notation of Section \ref{ss:eigen}, 
$F=\bQ$, $\fp=p$, and 
$\cX_{A,\fp}$ is a one-dimensional vector space over $\bQ$ generated by an eigenvector ${\bf u}$ of $A$ corresponding to an eigenvalue $\la$. Thus,  
\eqref{eq:class0} states that $T({\bf u})=x{\bf u}$, for some $x\in\bQ$. 
In case (b), there exists a prime $p\in\bN$ dividing $\la_2$ that does not divide $\la_1$ and there exists a prime $q\in\bN$ dividing 
$\la_1$ that does not divide $\la_2$, {\it i.e.}, $p,q\in\cP'$. Applying \eqref{eq:class0} to $F=\bQ$, $\fp=p$ and $\fp=q$, we
get that 
$T({\bf u}_1)=x_1{\bf u}_1$, $T({\bf u}_2)=x_2{\bf u}_2$ for eigenvectors ${\bf u}_1,{\bf u}_2$ of $A$ corresponding to
$\la_1,\la_2$, respectively, and $x_1,x_2\in\bQ$. Hence, 
$T=MXM^{-1}$, where $X=\diag\left(\begin{matrix} x_1 & x_2\end{matrix}\right)\in\M_2(\bQ)$. We will use Lemma \ref{l:simple} to show that the remaining conditions hold. One can easily check that for $T$ and $M$ given by \eqref{eq:T} and
\eqref{eq:dim2}, respectively, we have that $T\in\M_2(\caR)$ if and only if $x_1,x_2,\frac{x_1-x_2}{v} \in\caR$.  
Also, 
$T\in\End(\overline{G}_{A,p})$ if and only if for any $m\in\bN\cup\{0\}$ there exists $k_m\in\bN\cup\{0\}$ with
\bbe\label{eq:attm0}
A^{k_m}TA^{-m}\in\M_2(\bZ_p).% \operatorname{M}_n(\bZ),
\ee
Here, 
$$
A^{k_m}TA^{-m}=M\left(\begin{matrix}
x_1\la_1^{k_m-m} & 0 \\
0 &  x_2\la_2^{k_m-m} 
\end{matrix}
\right)M^{-1}.
$$
Note that any $p\in\cP'$  does not divide $\la_1-\la_2$ and, therefore, $M\in\GL_2(\bZ_p)$. Thus,
\eqref{eq:attm0} holds if and only if $x_1\la_1^{k_m-m},x_2\la_2^{k_m-m}\in\bZ_p$, $p\in\cP'$, which, together with
$x_1,x_2\in\caR$, implies that $x_i\in\bZ[\la_i^{-1}]$, $i=1,2$. Similarly, one shows that \eqref{eq:T} is sufficient for $T\in\End(G_A)$. 
\end{proof}

{\bf Case (c)} is different from cases (a) and (b) in the sense that for $T\in\End(G_A)$, we have that $T({\bf u})$
 is {\it not} an eigenvector of $A$ for 
{\it every} eigenvector ${\bf u}$ of $A$. Namely, there exists $p\in\cP'$ dividing $\la_1$ and hence 
\eqref{eq:class0} applied to $F=\bQ$, $\fp=p$ states that 
$T({\bf u}_1)=x_1{\bf u}_1$ for an eigenvector ${\bf u}_1$ of $A$ corresponding to $\la_1$. However, 
$T({\bf u}_2)$ is not necessarily a multiple of ${\bf u}_2$ for an eigenvector ${\bf u}_2$ of $A$ corresponding to $\la_2$.

\begin{thm}\label{th:2dimredc}
Assume $A=M\La M^{-1}$, where $M,\La$ are given by $\eqref{eq:dim2}$, and $\cP'\ne\emptyset$. Assume case $(c)$, i.e., 
$\rad(\la_2)\,\vert \rad(\la_1)$. Then $T\in\End(G_A)$ if and only if 
\bbe\label{eq:Ttriang}
T=\left(\begin{matrix}
x & y \\
0 &  z 
\end{matrix}
\right)\in\M_2(\caR),\quad z\in\bZ[\la_2^{-1}].
\ee
In particular, $\End(G_A)$ is not commutative and does not lie inside the centralizer of $A$ in $\M_2(\caR)$. 
%in general, $T\in\End(G_A)$ does not commute with $A$. 
%\bbe\label{eq:T}
%T=MXM^{-1},\quad X=\diag\left(\begin{matrix} x_1 & x_2\end{matrix}\right),
%\ee
%where $x_i\in\bZ\left[\frac{1}{\la_i}\right]$, $i=1,2$, and $\frac{x_1-x_2}{v}\in\caR$.
%In particular, $\End(G_A)$ is isomorphic to a subring of $\bZ\left[\frac{1}{\la_1}\right]\times \bZ\left[\frac{1}{\la_2}\right]$, and any $T\in\End(G_A)$ commutes with $A$.
\end{thm}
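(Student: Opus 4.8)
The proof will follow the scheme of the proof of Theorem~\ref{th:2dimredb}, the crucial difference being that in case~(c) the condition \eqref{eq:class0} is imposed by only one eigenvector. Two bookkeeping observations organize everything. First, $\cP'\ne\emptyset$ forces $\la_1\ne\la_2$ (if $\la_1=\la_2$ then $h_A=(x-\la_1)^2$ and $\cP'=\emptyset$). Second, since in case~(c) every prime dividing $\la_2$ divides $\la_1$, the set $\cP'$ is exactly the set of primes dividing $\la_1$ but not $\la_2$, while $\cP\setminus\cP'$ is exactly the set of primes dividing $\la_2$; consequently $\bZ[\la_2^{-1}]=\bZ[1/\rad(\la_2)]$ is precisely the subring of $\caR=\bZ[1/(\la_1\la_2)]$ whose elements have denominators supported on $\cP\setminus\cP'$. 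Note also $A=M\La M^{-1}=\left(\begin{matrix}\la_1 & w\\ 0 & \la_2\end{matrix}\right)$ with $w=u(\la_2-\la_1)/v\in\bZ$, and that $M{\bf e}_1={\bf e}_1$ is an eigenvector of $A$ for $\la_1$.

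First I would determine the $p$-adic structure for $p\in\cP'$. For such $p$, exactly one of $\la_1,\la_2$ — namely $\la_1$ — is divisible by $p$, so $p\nmid\la_1-\la_2$, hence $p\nmid v$ and $M\in\GL_2(\bZ_p)$. Conjugating by $M$ reduces to the diagonal case (as in the proof of Theorem~\ref{th:2dimredb}) and, since $\la_2$ and $v$ are $p$-adic units, gives $\overline{G}_{A,p}=M(\bQ_p{\bf e}_1\oplus\bZ_p{\bf e}_2)=\bQ_p{\bf e}_1\oplus\bZ_p{\bf e}_2$. This is exactly the shape in Lemma~\ref{lem:gen-n} with $t_p=1$ and no twist, so $\{{\bf e}_1,{\bf e}_2\}$ is an admissible basis relative to which $G_A$ has the trivial characteristic $\al_{p12}=0$ for all $p\in\cP'$; feeding this together with $\overline{G}_{A,q}=\bQ_q^2$ ($q\in\cP\setminus\cP'$) into \eqref{eq:fuks} also yields the clean description $G_A=\caR{\bf e}_1\oplus\bZ[\la_2^{-1}]{\bf e}_2$.

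For necessity, let $T\in\End(G_A)$. By Theorem~\ref{th:prelim0} (with $F=\bQ$, $\fp=p$) we have $T\in\M_2(\caR)$; since $\cX_{A,p}=\bQ{\bf e}_1$ for every $p\in\cP'$ and $\cP'\ne\emptyset$, \eqref{eq:class0} forces the first column of $T$ to be a multiple of ${\bf e}_1$, so $T=\left(\begin{matrix}x & y\\ 0 & z\end{matrix}\right)$; and \eqref{eq:apcond} with $\al_{p12}=0$ reduces to $z\in\bZ_p$ for all $p\in\cP'$. Combined with $z\in\caR$, the denominator of $z$ avoids every prime of $\cP'$, hence is supported on $\cP\setminus\cP'$, i.e.\ $z\in\bZ[\la_2^{-1}]$. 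For sufficiency, given $T$ of the stated form, for each $p\in\cP'$ we have $z\in\bZ[\la_2^{-1}]\subseteq\bZ_p$ (as $p\nmid\la_2$), so $T$ maps $\overline{G}_{A,p}=\bQ_p{\bf e}_1\oplus\bZ_p{\bf e}_2$ into itself, and $T\in\M_2(\caR)$ by hypothesis; by Lemma~\ref{l:simple}, $T\in\End(G_A)$. (Equivalently one may argue straight from $G_A=\caR{\bf e}_1\oplus\bZ[\la_2^{-1}]{\bf e}_2$: applying $T$ to $\la_1^{-k}{\bf e}_1\in G_A$ shows the lower-left entry of $T$ must vanish — otherwise $\la_1^{-k}$ times it would lie in $\bZ[\la_2^{-1}]$ for all $k$, impossible since $\la_1$ has a prime factor in $\cP'$ — and the rest is immediate.) Finally, the matrices $\left(\begin{matrix}1 & 0\\ 0 & 0\end{matrix}\right)$ and $\left(\begin{matrix}0 & 1\\ 0 & 0\end{matrix}\right)$ both lie in $\End(G_A)$ and do not commute, so $\End(G_A)$ is noncommutative; and the second fails to commute with $A$, since the $(1,2)$-entry of their commutator is $\la_2-\la_1\ne0$, so $\End(G_A)$ is not contained in the centralizer of $A$ in $\M_2(\caR)$.

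The step I expect to be the main obstacle is the second paragraph: computing $\overline{G}_{A,p}$ for $p\in\cP'$ and certifying that the associated basis has trivial characteristic, together with the careful coordination of the three rings $\bZ\subseteq\bZ[\la_2^{-1}]\subseteq\caR$ and the prime sets $\cP'$ and $\cP\setminus\cP'$ that case~(c) makes explicit. Once the description $G_A=\caR{\bf e}_1\oplus\bZ[\la_2^{-1}]{\bf e}_2$ is in hand, the endomorphism characterization follows directly.
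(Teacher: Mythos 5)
Your proof is correct and takes essentially the same route as the paper: both rest on Theorem \ref{th:prelim0} and Lemma \ref{l:simple}, on identifying ${\bf e}_1$ as the eigenvector for $\la_1$ so that \eqref{eq:class0} at any $p\in\cP'$ forces $T$ to be upper-triangular, and on the facts that $M\in\GL_2(\bZ_p)$ and $\la_2$ is a unit in $\bZ_p$ for $p\in\cP'$, which yield $z\in\bZ[\la_2^{-1}]$. Your explicit decomposition $G_A=\caR{\bf e}_1\oplus\bZ[\la_2^{-1}]{\bf e}_2$ and the concrete noncommuting matrices for the final claim are a useful bookkeeping refinement, not a different method.
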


\begin{proof} Let $T\in\End(G_A)$. Assume $\rad(\la_2)\,\vert \rad(\la_1)$. Note that any $p\in\cP'$   
divides $\la_1$ and does not divide $\la_2$. Then  
\eqref{eq:class0} applied to $F=\bQ$, $\fp=p$ states that 
$T({\bf e}_1)=x_1{\bf e}_1$ for some $x_1\in\bQ$, since $A$ is upper-triangular and ${\bf e}_1$ is an eigenvector of $A$ corresponding to $\la_1$  (by assumption). 
Therefore, $T$ is also upper-triangular. Let
\bbe\label{eq:uptriang}
T=\left(\begin{matrix}
x & y \\
0 &  z 
\end{matrix}
\right).
\ee
As in the proof of Theorem \ref{th:2dimredb}, $T\in\M_2(\caR)$ and \eqref{eq:attm0} holds for  $T$ given by 
\eqref{eq:uptriang} and any $p\in\cP'$. Taking into account that $\la_2$  is a unit in $\bZ_p$ and $M\in\GL_2(\bZ_p)$, 
this implies $z\in \bZ[\la_2^{-1}]$. Similarly, one shows that \eqref{eq:Ttriang} is sufficient for $T\in\End(G_A)$.  
\end{proof}

\section{Irreducible characteristic polynomial}\label{s:irredcharpoly}
% Let $K$ be a finite Galois extension of $\bQ$ containing all the eigenvalues of $A$,  {\it e.g.}, $K$ is the splitting field of the characteristic polynomial $h_A$ of $A$. 
For an eigenvalue $\la\in\overline{\bQ}$ of $A$ 
let $K=\bQ(\la)$ and let $\cS_{\la}$ consist of all prime ideals of the ring of integers
$\cO_K$ of $K$ dividing $\la$. (Note that $\la\in\cO_K$.) We denote by $\cO_{K,{\la}}$ the ring of $\cS_{\la}$-integers, {\it i.e.},
\bbe\label{eq:Sint}
\cO_{K,{\la}}=\{x\in K\,\vert\, \val_{\fp}(x)\geq 0\text{ for any prime ideal }\fp\text{ of }\cO_K\text{ not in }\cS_{\la}\}
=\cO_{K}\left[{\la}^{-1}\right]
\ee 
and
$$
\cU_{K,\cS_{\la}}=\{x\in K\,\vert\, \val_{\fp}(x)=0\text{ for any prime ideal }\fp\text{ of }\cO_K\text{ not in }\cS_{\la}\}
$$
is the group of $\cS_{\la}$-units. In particular, $\caR=\cO_{\bQ,\cP}$.  
%Denote
%$$
%\cO_{K,\cS_0}=\{x\in K\,\vert\, \val_{\fp}(x)\geq 0\text{ for any prime ideal }\fp{ of }\cO_K\text{ not }\not\in \cS\},
%$$
%and
%$$
%\cU_{K,\cS_0}=\{x\in K\,\vert\, \val_{\fp}(x)=0\text{ for any prime ideal of}\cO_K\fp\text{ not dividing }\la_1\}.
%$$

\sbr

In the next proposition, we consider the generic case when the characteristic polynomial $h_A\in\bZ[x]$ of $A$ is irreducible. 
We also add an extra assumption that there exists a prime $p\in\bN$ such that 
$n$ and $t_p$ are coprime, denoted by $(n,t_p)=1$. It turns out that if $(n,t_p)=1$, then 
$T({\bf u})$ is a multiple of ${\bf u}$ for any $T\in\End(G_A)$ and any eigenvector ${\bf u}$ of $A$. 
In particular, $\End(G_A)$ is commutative. If $(n,t_p)\ne 1$ for any $p\in\cP'$, then this is not necessarily true (see Example \ref{ex:4} below with a non-commutative $\End(G_A)$).

\begin{prop}\label{pr:end} 
Assume $A\in\M_n(\bZ)$ is non-singular with an irreducible characteristic polynomial 
$h_A\in\bZ[x]$ and $\cP'\ne\emptyset$. 
Assume, in addition, that there exists a prime $p\in\bN$ with 
$(n,t_p)=1$. Then there is a ring embedding $\imath=\imath(A,\la):\End(G_A)\hrar\cO_{K,{\la}}$, which induces  a group embedding 
$\imath:\Aut(G_A)\hrar\cU_{K,\cS_{\la}}$.
\end{prop}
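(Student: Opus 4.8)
The plan is to show that every nonzero $T\in\End(G_A)$ commutes with $A$ and then use the structure of the centralizer to produce the embedding. I would start from Theorem \ref{th:prelim0}: for each $p\in\cP'$ and each prime $\fp$ of $\cO_F$ above $p$, condition \eqref{eq:class0} says $T(\cX_{A,\fp})\subseteq\cX_{A,\fp}$, where $\dim_F\cX_{A,\fp}=t_p$. Over $F$ (or over $K$, after noting $h_A$ is irreducible so $A$ is diagonalizable over its splitting field and each eigenspace is one-dimensional), the subspaces $\cX_{A,\fp}$ are spanned by eigenvectors $\mathbf{u}_i$ whose eigenvalues are divisible by $\fp$. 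The Galois group $\Gal(F/\bQ)$ permutes the eigenvalues $\la_1,\dots,\la_n$ transitively (irreducibility of $h_A$), hence permutes the corresponding eigenlines; since $T$ has rational entries it is Galois-equivariant, so the condition "$T$ preserves $\cX_{A,\fp}$" for one choice of $\fp\mid p$ forces it for all Galois conjugates, i.e.\ $T$ preserves the span of any Galois-stable set of eigenlines. Fixing a prime $p$ with $(n,t_p)=1$, I claim this forces $T$ to preserve every individual eigenline. Indeed, $T$ acts on the $n$-element set of eigenvectors by a permutation $\pi$ commuting with the $\Gal(F/\bQ)$-action (because $T$ preserves the "divisible by $\fp$" property for every $\fp\mid p$, and these properties distinguish the eigenlines into the $\fp$-fibers), and the orbits of $\langle\pi\rangle$ have size dividing both $n$ and — after a counting argument using that the $t_p$ eigenvalues divisible by a given $\fp$ must be permuted among themselves — dividing $t_p$; since $(n,t_p)=1$ the orbits are singletons, so $T\mathbf{u}_i\in F\mathbf{u}_i$ for all $i$. (If $T=0$ we are in the trivial case.)

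Once $T\mathbf{u}=x\,\mathbf{u}$ for the chosen eigenvector $\mathbf{u}$ with eigenvalue $\la$, the scalar $x$ lies in $K=\bQ(\la)$: apply a Galois automorphism, or simply note that $T\in\M_n(\bQ)$ and $\mathbf{u}$ can be chosen in $K^n$, so $x$ is determined by a linear system with coefficients in $K$. This gives a map $\imath:\End(G_A)\to K$, $T\mapsto x$. It is clearly additive, and multiplicative because $(T_1T_2)\mathbf{u}=T_1(x_2\mathbf{u})=x_1x_2\mathbf{u}$; it sends $\id$ to $1$; and it is injective because $T\mathbf{u}_i=x_i\mathbf{u}_i$ for all $i$ with all $x_i$ Galois conjugate to $x$, so $x=0$ forces $T=0$ on a basis of $\bQ^n$, hence $T=0$. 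Thus $\imath$ is a ring embedding $\End(G_A)\hookrightarrow K$.

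It remains to identify the image inside $\cO_{K,\la}=\cO_K[\la^{-1}]$. Since $T\in\End(G_A)\subseteq\M_n(\caR)$ and for each $p\in\cP'$ the condition $T\in\End(\overline{G}_{A,p})$ from Lemma \ref{l:simple} means that for every $m\ge 0$ there is $k_m\ge 0$ with $A^{k_m}TA^{-m}\in\M_n(\bZ_p)$, I apply this to $\mathbf{u}$: $A^{k_m}TA^{-m}\mathbf{u}=x\la^{k_m-m}\mathbf{u}$, and reading off $\fp$-adic valuations (for $\fp\mid p$ in $\cO_F$, or directly in $\cO_K$) gives $\val_\fp(x)+(k_m-m)\val_\fp(\la)\ge 0$. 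Taking $m$ large and $k_m$ as forced shows $\val_\fp(x)$ can be negative only at primes $\fp$ dividing $\la$, while at primes not dividing $\det A$ one uses $T\in\M_n(\caR)$ together with the analogous argument at those primes (where $\overline{G}_{A,p}=\bZ_p^n$, giving $\val_\fp(x)\ge 0$ outright). Hence $\val_\fp(x)\ge 0$ for every prime ideal $\fp$ of $\cO_K$ not in $\cS_\la$, i.e.\ $x\in\cO_{K,\la}$ by \eqref{eq:Sint}. Finally, if $T\in\Aut(G_A)$ then $T^{-1}\in\End(G_A)$, so both $x=\imath(T)$ and $x^{-1}=\imath(T^{-1})$ lie in $\cO_{K,\la}$, which is exactly the condition $x\in\cU_{K,\cS_\la}$; this gives the induced group embedding $\Aut(G_A)\hookrightarrow\cU_{K,\cS_\la}$.

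The main obstacle I anticipate is the combinatorial step in the first paragraph: carefully showing that the coprimality $(n,t_p)=1$, combined with the way $T$ must respect the partition of the $n$ eigenlines into $\fp$-divisibility classes (each of a size determined by how $\la$ factors), forces $T$ to fix each eigenline rather than merely permute them within Galois orbits. One must be careful that the relevant "block sizes" genuinely divide $t_p$ (or $n/t_p$ complementarily), so that a common-orbit-size argument with $\gcd(n,t_p)=1$ applies — this is where Example \ref{ex:4} shows the hypothesis is essential, so the argument cannot be purely formal.
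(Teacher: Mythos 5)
Your overall strategy matches the paper's: force $T$ to fix an eigenline using Galois transitivity together with $(n,t_p)=1$, read off the scalar $x\in K$ from $T\mathbf{u}=x\mathbf{u}$, and then prove $x\in\cO_{K,\la}$. Your second and third paragraphs are essentially sound: $\imath$ is a ring embedding for the reasons you give, and your $\cS_\la$-integrality argument --- applying $A^{k_m}TA^{-m}\in\M_n(\bZ_p)$ (or simply $A^iT\in\M_n(\bZ)$) to $\mathbf{u}$ and comparing $\fp$-adic valuations, where the minimum of the valuations $\val_\fp(u_j)$ appears on both sides and cancels --- is a legitimate alternative to the paper's route, which instead triangularizes $M$ by a matrix $P\in\GL_n(\cO_L)$ using \cite{nt} and reads $\la^ix$ off the diagonal of $P(A^iT)P^{-1}$.

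The genuine gap is in the first paragraph, and it is exactly the step you flag. You assert that $T$ acts on the $n$ eigenlines by a permutation $\pi$ commuting with the Galois action, justified by the fact that $T$ preserves each $\cX_{A,\fp}$. This does not follow: each $\cX_{A,\fp}$ is a sum of $t_p$ eigenlines, and a rational matrix preserving such sums (and their Galois conjugates and intersections) can a priori mix the eigenlines inside them; nothing at this stage says that $T\mathbf{u}_i$ is an eigenvector of $A$ at all. Since ``$T$ maps eigenlines to eigenlines'' is essentially the hard content of the proposition (it is immediate once one knows $T$ commutes with $A$, and conversely), assuming it makes the subsequent orbit-counting circular. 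The paper closes this gap with a minimality argument rather than a permutation: among the nonempty $N\subseteq\Sg$ with $T(U_N)\subseteq U_N$, pick $S$ of smallest cardinality; using $U_R\cap U_V=U_{R\cap V}$, $\sg(U_R)=U_{\sg(R)}$ and transitivity of $\Gal(\overline{\bQ}/\bQ)$ on $\Sg$, every invariant $N$ is a disjoint union of translates $\sg(S)$, so $|S|$ divides $|N|$; taking $N=\Sg$ and $N=P$ gives $|S|\mid (n,t_p)=1$, so a single eigenline is invariant, and then all of them are by Galois equivariance. If you want to keep your orbit-size heuristic you must first prove the permutation claim, and Example \ref{ex:4} (where an endomorphism sends an eigenvector to an eigenvector for a different eigenvalue, and coprimality fails) shows that any such argument must genuinely use $(n,t_p)=1$; the minimal-invariant-set argument, or an equivalent, is really needed here.
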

\begin{proof}
Let $T\in\End(G_A)$ be arbitrary. It follows from \cite{s2} that either $T=0$ or $T$ is non-singular and preserves eigenspaces, {\it i.e.}, 
$T{\bf u}, {\bf u}\in K^n$ are both eigenvectors of $A$ corresponding to the same eigenvalue $\la$. 
We provide a comprehensive overview of the argument, both for the sake of completeness and because the results
in  \cite{s2} are presented for isomorphisms from $G_A$ to $G_B$, rather than endomorphisms. Here, $B\in\M_n(\bZ)$ is another non-singular matrix. Nevertheless, the same principles apply.
Let $p\in\cP'$. By \eqref{eq:zp}, $\overline{G}_{A,p}\cong \bQ_p^{t_p}\oplus\bZ_p^{n-t_p}$ as $\bZ_p$-modules, 
$0<t_p<n$.
In \cite[Lemma 4.1]{s2}, we show that after appropriate extension of scalars, the $\bZ_p$-divisible part $D_p(A)$
of $\overline{G}_{A,p}=G_A\otimes_{\bZ}\bZ_p$ is generated by eigenvectors of $A$ corresponding to eigenvalues divisible 
by a prime ideal $\fp$ above $p$. Let $F$ be a finite Galois extension of $\bQ$ containing all the eigenvalues of $A$,  
{\it e.g.}, $F$ is the splitting field of the characteristic polynomial $h_A$ of $A$; 
$$
\bQ\subset K=\bQ(\la)\subseteq F\subset\overline{\bQ}.
$$ 
Let $\fp$ be a prime ideal of the ring of integers $\cO_F$ of $F$ above $p$. 
Denote by $\Sg$ the set of all distinct eigenvalues of $A$ and let $P$ denote the set of all $\la\in\Sg$ divisible by $\fp$. Since $h_A$ is irreducible, the cardinalities are $|\Sg|=n$ and  $|P|=t_p$. Denote
$$
U_P=\oplus_{\la\in P}\Span_F({\bf u}(\la)),
$$ 
where ${\bf u}(\la)\in F^n$ is an eigenvector of $A$ corresponding to $\la$. Thus, $U_P$ is the span of all eigenvectors of $A$ corresponding to eigenvalues divisible by $\fp$ and $U_P=\cX_{A,\fp}$ in the notation of Section \ref{ss:eigen}. An endomorphism $T$ of $G_A$ induces a $\bZ_p$-module endomorphism of  $\overline{G}_{A,p}$ and therefore, $T(D_p(A))\subseteq D_p(A)$. This implies 
$T(U_P)\subseteq U_P$. Thus, there exists  a non-empty subset $S\subseteq\Sg$ with the smallest cardinality satisfying $T(U_S)\subseteq U_S$. Denote $G=\Gal(\overline{\bQ}/\bQ)$. It is not hard to see that for any 
$R,V\subseteq \Sg$ and $\sg\in G$ we have
\begin{alignat}{2}
U_R\cap U_V&= U_{R\,\cap\, V}, & \quad \sg(U_R)&= U_{\sg(R)}  \label{eq:syst11}.
\end{alignat}
Assume $T(U_N)\subseteq U_N$ for some non-empty $N\subseteq \Sg$ and let $\sg\in G$. 
Since
$T$ is defined over $\bQ$,  
using properties
\eqref{eq:syst11}, we have $T(U_{\sg(N)})\subseteq U_{\sg(N)}$.
Hence,
$T(U_{S\,\cap\,\sg(N)})\subseteq U_{S\,\cap\,\sg(N)}$.
Since $S$ is the smallest with this property, either $S\,\cap\,\sg(N)=S$ or
$S\,\cap\,\sg(N)=\emptyset$. Equivalently, $\sg(S)\,\cap\,N=\sg(S)$ or
$\sg(S)\,\cap\,N=\emptyset$. In particular, taking $N=\tau(S)$
for an arbitrary $\tau\in G$, either $\sg(S)=\tau(S)$ or
$\sg(S)\,\cap\,\tau(S)=\emptyset$. Moreover, since 
$h_A$  is irreducible, $G$ acts transitively on $\Sg$. 
This implies that $N$ is a disjoint union of orbits $\sg(S)$ of $S$, $\sg\in G$, and, furthermore, there exists a subset $H\subseteq G$ depending on $N$ such that 
\bbe\label{eq:yay}
N=\bigsqcup_{\sg\in H} \sg(S),\quad |N|=|H|\cdot |S|.
\ee
Clearly, $T(U_N)\subseteq U_N$ holds for $N=\Sg$ and also for $N=P$. Thus, by \eqref{eq:yay}, $|S|$ divides both 
$n$ and $t_p$. By assumption, $(n,t_p)=1$ and hence $|S|=1$. Therefore, there exists an eigenvector ${\bf u}$ (corresponding to an eigenvalue $\la$) of $A$ such that $T({\bf u})=x{\bf u}$ for some $x\in\overline{\bQ}$. For a fixed eigenvalue $\la$, we can choose ${\bf u}\in K^n$, $K=\bQ(\la)$, and hence $x\in K$.  
Multiplying ${\bf u}$ by an appropriate integer, without loss of generality, we can assume ${\bf u}\in(\cO_K)^n$. 
Since $h_A$ is irreducible,  
$G$ acts transitively on the set of all eigenvalues of $A$, {\it i.e.}, there exist
$\sg_1,\ldots,\sg_n\in G$, $\sg_1=\id$, such that $A=M\La M^{-1}$, where
\bbe\label{eq:lm}
\La=\diag\left(\begin{matrix} \sg_1(\la)& \ldots & \sg_n(\la)\end{matrix}\right),\quad M=\left(\begin{matrix} \sg_1({\bf u}) & \ldots & \sg_n({\bf u}\end{matrix})\right)
\ee
with each $\sg_i({\bf u})$ written as a column, $i\in\{1,\ldots,n\}$. Then
\bbe\label{eq:tx}
T=MXM^{-1},\quad X=\diag\left(\begin{matrix} \sg_1(x)&\ldots &\sg_n(x)\end{matrix}\right).
\ee 
Thus, if $\la$ is fixed, then 
$T$ is completely determined by $x\in K$. A different choice of $\la$, {\it e.g.}, $\sg(\la)$ for some 
$\sg\in G$, will result in $\sg(x)$. We fix an eigenvalue $\la$ of $A$ and let $M$ be given by \eqref{eq:lm}. 
Define $\imath:\End(G_A)\rar K$ via $\imath(T)=x$, $T\in\End(G_A)$. By above, $\imath$ is an injective ring homomorphism. Note that $M\in\M_n(\cO_F)$. 
By \cite[p. 4, Theorem 2]{nt}, there exists a finite extension $L\subset\overline{\bQ}$ of $F$ and 
$P\in\GL_n(\cO_L)$ such that $PM$ is upper-triangular. From the definition of $G_A$, there exists $i\in\bN\cup\{0\}$ such that $A^iT\in\M_n(\bZ)$. In particular, $\la^ix\in\cO_L\cap K=\cO_K$ and hence $x\in\cO_K[\la^{-1}]=\cO_{K,{\la}}$.
\end{proof}
It is well-known that $\cU_{K,\cS}$ is a finitely generated abelian group. Therefore, by Proposition \ref{pr:end},
$\Aut(G_A)$ is also finitely-generated. 
\begin{cor}\label{cor:abel}
Under the assumptions of Proposition $\ref{pr:end}$, 
$\End(G_A)$ is a commutative ring, and $\Aut(G_A)$ is a finitely generated abelian group. 
\end{cor}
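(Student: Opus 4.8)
The plan is to derive both assertions directly from the embedding $\imath : \End(G_A)\hookrightarrow \cO_{K,\la}$ constructed in Proposition \ref{pr:end}. First I would observe that commutativity is essentially immediate: $\imath$ is an injective ring homomorphism into $\cO_{K,\la}$, and $\cO_{K,\la}=\cO_K[\la^{-1}]$ is a subring of the number field $K=\bQ(\la)$, hence commutative. Since $\imath$ identifies $\End(G_A)$ with a subring of $\cO_{K,\la}$, the ring $\End(G_A)$ inherits commutativity. (One should note that $\imath$ being a ring homomorphism already presupposes that $\End(G_A)$ is a ring with the multiplication coming from composition; the point established in the proof of Proposition \ref{pr:end} is precisely that composition corresponds, via the simultaneous diagonalization $T=MXM^{-1}$ with $X=\diag(\sg_1(x),\dots,\sg_n(x))$, to multiplication of the scalars $x\in K$.)

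For the second assertion I would argue as follows. Since $\imath$ is a ring embedding, it restricts to an injective group homomorphism from the unit group $\Aut(G_A)=\End(G_A)^{\times}$ into $\cO_{K,\la}^{\times}$. Now $\cO_{K,\la}^{\times}=\cO_K[\la^{-1}]^{\times}$ is exactly the group $\cU_{K,\cS_\la}$ of $\cS_\la$-units of $K$, because a unit of $\cO_K[\la^{-1}]$ is precisely an element $x\in K$ with $\val_{\fp}(x)=0$ for every prime $\fp\notin\cS_\la$. By Dirichlet's $\cS$-unit theorem, $\cU_{K,\cS_\la}$ is a finitely generated abelian group (of rank $r_1+r_2-1+|\cS_\la|$, with torsion the roots of unity in $K$). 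A subgroup of a finitely generated abelian group is again finitely generated abelian, so $\Aut(G_A)$ is finitely generated abelian.

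I do not anticipate a genuine obstacle here: the corollary is a formal consequence of Proposition \ref{pr:end} together with the standard structure theory of $\cS$-units, and indeed the text just before the statement already flags that $\cU_{K,\cS}$ is finitely generated. The only point requiring a word of care is the identification $\cO_{K,\la}^{\times}=\cU_{K,\cS_\la}$ and the remark that $\imath(\Aut(G_A))$ lands in the \emph{unit} group (which follows since $\imath$ is a ring homomorphism and $\imath(\id)=1$, so $\imath(T^{-1})=\imath(T)^{-1}$ for invertible $T$). Everything else is bookkeeping, and the proof can be stated in a few lines.
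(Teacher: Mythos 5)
Your argument is correct and follows essentially the same route as the paper: commutativity comes from the ring embedding $\imath:\End(G_A)\hookrightarrow\cO_{K,\la}$ of Proposition \ref{pr:end}, and finite generation of $\Aut(G_A)$ comes from the induced group embedding into $\cU_{K,\cS_\la}$ together with the (Dirichlet) fact that the $\cS$-unit group is a finitely generated abelian group. The extra points you flag (that $\cO_{K,\la}^{\times}=\cU_{K,\cS_\la}$ and that invertible endomorphisms map to units) are exactly the content already packaged in the statement of Proposition \ref{pr:end}, so nothing further is needed.
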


Clearly, $A^i\in\End(G_A)$ for any $i\in\bZ$. Since
$\imath(A^i)=\la^i$, this implies that $\bZ[\la^{\pm 1}]=\bZ[\la,\frac{1}{\la}]\subseteq \imath(\End(G_A))$ and $\imath(\End(G_A))$ (equivalently, $\End(G_A)$) is a $\bZ[\la^{\pm 1}]$-module (equivalently, a $\bZ[t^{\pm 1}]$-module via $\la\mapsto t$, $t$ is a variable). Thus,
$$
\bZ[\la^{\pm 1}]\subseteq \imath(\End(G_A))\subseteq \cO_{K,{\la}}=\cO_{K}[\la^{-1}]. 
$$
Moreover, under the assumptions of Proposition $\ref{pr:end}$, 
$\End(G_A)$ is a finitely-generated $\bZ[\la^{\pm 1}]$-module. Indeed, we have that 
% it follows from Proposition \ref{pr:end} that $\imath(T)=x\in\cO_{K,\cS}$ for any $T\in\End(G_A)$. From the definition of $\cO_{K,\cS}$, this implies that there exists $i\in\bN\cup\{0\}$ such that 
%$y=\la^ix\in\cO_K$ and hence $y\in\imath(\End(G_A))\cap \cO_K$. Here, $Y=\imath(\End(G_A))\cap \cO_K$ is a $\bZ$-submodule of $\cO_K$ that contains $\bZ[\la]$:
$$
\bZ[\la]\subseteq Y\subseteq\cO_K,\quad Y=\imath(\End(G_A))\cap \cO_K.
$$
 It is well-known that both $\cO_K$, $\bZ[\la]$ are finitely-generated $\bZ$-modules of rank $n$ and therefore so is 
$Y$. Let $s=[Y:\bZ[\la]]$, and let $\ga_1,\ldots,\ga_s\in Y$ be representatives of $Y/\bZ[\la]$.
Let $T\in\End(G_A)$, $\imath(T)=x\in\cO_{K,{\la}}$, so that $y=\la^ix\in\cO_K$ for some $i\in\bN\cup\{0\}$.
Hence, $y\in Y$ and 
%there exists a $\bZ$-basis $\ga_1,\ldots,\ga_k$ of $Y$ such that $y=\sum_{i=1}^ka_i\ga_i$ for some $a_1,\ldots,a_k\in\bZ$ and hence $x=\la^{-i}y=\la^{-i}\sum_{i=1}^ka_i\ga_i$, {\it i.e.}, $\ga_1,\ldots,\ga_k$ generate $\imath(\End(G_A))$ over $\bZ[\la^{\pm 1}]$ (equivalently, over $\bZ[t^{\pm 1}]$). This proves the following:
%there exists a $\bZ$-basis $\ga_1,\ldots,\ga_k$ of $Y$ such that 
$y=\ga+a$ for some $\ga\in\{\ga_1,\ldots,\ga_s\}$ and $a\in\bZ[\la]$. Then, 
$$
x=\la^{-i}y=\ga\la^{-i}+a\la^{-i},\quad a\la^{-i}\in\bZ[\la^{\pm 1}], 
$$ 
{\it i.e.}, 
$1,\ga_1,\ldots,\ga_s$ generate $\imath(\End(G_A))$ over $\bZ[\la^{\pm 1}]$. This proves the following 

\begin{cor}\label{cor:integ}
Under the assumptions of Proposition $\ref{pr:end}$, $\imath(\End(G_A))$ $($equivalently, $\End(G_A))$ 
is a finitely-generated $\bZ[\la^{\pm 1}]$-module. %More precisely, $\imath(\End(G_A))$ is generated 
If $\cO_K=\bZ[\la]$, then 
$$
\imath(\End(G_A))=\bZ[\la^{\pm 1}]=\cO_{K,{\la}}.
$$
\end{cor}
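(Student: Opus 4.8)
The plan is to prove Corollary \ref{cor:integ} by exploiting the inclusions $\bZ[\la]\subseteq \imath(\End(G_A))\cap\cO_K\subseteq\cO_K$ already established in the discussion preceding the statement, together with the fact that $\imath(\End(G_A))$ is a $\bZ[\la^{\pm 1}]$-module. First I would recall that $\cO_K$ and $\bZ[\la]$ are both free $\bZ$-modules of rank $n=[K:\bQ]$, so the intermediate module $Y=\imath(\End(G_A))\cap\cO_K$ is also a free $\bZ$-module of rank $n$, and $s=[Y:\bZ[\la]]$ is finite. Fixing coset representatives $\ga_1,\dots,\ga_s$ of $Y/\bZ[\la]$, the key observation is that every $x\in\imath(\End(G_A))\subseteq\cO_{K,\la}=\cO_K[\la^{-1}]$ can be written as $\la^{-i}y$ with $y\in\cO_K$ for some $i\geq 0$; since $\la^{-i}y=x\in\imath(\End(G_A))$ and $y=\la^i x\in\imath(\End(G_A))$ as well (because $\la^i\in\bZ[\la]\subseteq\imath(\End(G_A))$ and the image is a ring), we get $y\in Y$, hence $y=\ga+a$ with $a\in\bZ[\la]$ and $\ga\in\{\ga_1,\dots,\ga_s\}$. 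Dividing by $\la^i$ expresses $x$ as a $\bZ[\la^{\pm1}]$-combination of $1,\ga_1,\dots,\ga_s$, which proves finite generation over $\bZ[\la^{\pm1}]$.

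For the second assertion, suppose $\cO_K=\bZ[\la]$. Then $s=1$ and one may take $\ga_1=1$, so the argument above shows $\imath(\End(G_A))$ is generated by $1$ over $\bZ[\la^{\pm1}]$, i.e. $\imath(\End(G_A))\subseteq\bZ[\la^{\pm1}]$. Combined with the reverse inclusion $\bZ[\la^{\pm1}]\subseteq\imath(\End(G_A))$ (which holds because $A^i\in\End(G_A)$ and $\imath(A^i)=\la^i$ for all $i\in\bZ$, and $\imath(\End(G_A))$ is closed under addition), we obtain $\imath(\End(G_A))=\bZ[\la^{\pm1}]$. Finally $\bZ[\la^{\pm1}]=\bZ[\la][\la^{-1}]=\cO_K[\la^{-1}]=\cO_{K,\la}$ by hypothesis and the identity \eqref{eq:Sint}, giving the displayed equalities.

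Most of this is already spelled out in the paragraphs immediately before the corollary statement, so the proof is essentially a matter of assembling those observations; the only point requiring a small amount of care is the claim that $Y$ has rank $n$ as a $\bZ$-module, which follows because it is squeezed between two rank-$n$ lattices $\bZ[\la]$ and $\cO_K$ inside the $n$-dimensional $\bQ$-vector space $K$. I do not expect any genuine obstacle here: the substantive work was done in Proposition \ref{pr:end} (producing the embedding $\imath$ with image inside $\cO_{K,\la}$) and in Corollary \ref{cor:abel}; the present corollary is a structural refinement obtained by a standard lattice-index argument.

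\begin{proof}
As noted above, $A^i\in\End(G_A)$ with $\imath(A^i)=\la^i$ for all $i\in\bZ$, so $\bZ[\la^{\pm1}]\subseteq\imath(\End(G_A))$, and since $\imath(\End(G_A))\subseteq\cO_{K,\la}=\cO_K[\la^{-1}]$ by Proposition \ref{pr:end}, the image $\imath(\End(G_A))$ is a $\bZ[\la^{\pm1}]$-module. Set $Y=\imath(\End(G_A))\cap\cO_K$, so that
$$
\bZ[\la]\subseteq Y\subseteq\cO_K.
$$
Both $\cO_K$ and $\bZ[\la]$ are free $\bZ$-modules of rank $n=[K:\bQ]$, hence $Y$, sandwiched between them inside $K$, is also a free $\bZ$-module of rank $n$; in particular $s=[Y:\bZ[\la]]$ is finite. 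Choose representatives $\ga_1,\ldots,\ga_s\in Y$ of the cosets in $Y/\bZ[\la]$.

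Let $T\in\End(G_A)$ and put $x=\imath(T)\in\cO_{K,\la}$, so $y=\la^ix\in\cO_K$ for some $i\in\bN\cup\{0\}$. Since $\la^i\in\bZ[\la^{\pm1}]\subseteq\imath(\End(G_A))$ and $\imath(\End(G_A))$ is a ring, $y=\la^ix\in\imath(\End(G_A))$ as well, whence $y\in Y$. Write $y=\ga+a$ with $\ga\in\{\ga_1,\ldots,\ga_s\}$ and $a\in\bZ[\la]$. Then
$$
x=\la^{-i}y=\ga\la^{-i}+a\la^{-i},\qquad a\la^{-i}\in\bZ[\la^{\pm1}].
$$
Thus $1,\ga_1,\ldots,\ga_s$ generate $\imath(\End(G_A))$ as a $\bZ[\la^{\pm1}]$-module, proving that $\imath(\End(G_A))$ (equivalently $\End(G_A)$) is finitely generated over $\bZ[\la^{\pm1}]$.

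If $\cO_K=\bZ[\la]$, then $s=1$ and we may take $\ga_1=1$, so the above shows $\imath(\End(G_A))\subseteq\bZ[\la^{\pm1}]$. Together with $\bZ[\la^{\pm1}]\subseteq\imath(\End(G_A))$ this gives $\imath(\End(G_A))=\bZ[\la^{\pm1}]$, and $\bZ[\la^{\pm1}]=\cO_K[\la^{-1}]=\cO_{K,\la}$ by \eqref{eq:Sint}.
\end{proof}
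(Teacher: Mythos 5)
Your proof is correct and follows essentially the same argument as the paper: sandwiching $Y=\imath(\End(G_A))\cap\cO_K$ between the rank-$n$ lattices $\bZ[\la]$ and $\cO_K$, using coset representatives of $Y/\bZ[\la]$ and writing $x=\la^{-i}y$ to obtain the generators $1,\ga_1,\ldots,\ga_s$ over $\bZ[\la^{\pm 1}]$, then specializing to $\cO_K=\bZ[\la]$. No gaps; your added remark that $y=\la^i x$ lies in the image because the image is a ring just makes explicit a step the paper leaves implicit.
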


\begin{cor}\label{cor:mine} 
Under the assumptions of Proposition $\ref{pr:end}$, 
$$\End(G_A)\otimes_{\bZ}\bQ\cong\bQ(\la).$$
%where $\la$ is an eigenvalue of $A$.
\end{cor}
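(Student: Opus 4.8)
The plan is to leverage the embedding $\imath:\End(G_A)\hookrightarrow\cO_{K,\la}$ established in Proposition \ref{pr:end}, together with the sandwiching $\bZ[\la^{\pm1}]\subseteq\imath(\End(G_A))\subseteq\cO_{K,\la}=\cO_K[\la^{-1}]$ already recorded in the discussion preceding Corollary \ref{cor:integ}. Tensoring all three terms of this inclusion with $\bQ$ over $\bZ$ preserves inclusions because $\bQ$ is flat (indeed torsion-free) over $\bZ$, so it suffices to show that the two outer terms $\bZ[\la^{\pm1}]\otimes_\bZ\bQ$ and $\cO_K[\la^{-1}]\otimes_\bZ\bQ$ both coincide with $\bQ(\la)$; then the middle term $\imath(\End(G_A))\otimes_\bZ\bQ$ is squeezed between two copies of $\bQ(\la)$ and must equal $\bQ(\la)$ as well. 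Since $\imath$ is an injective ring homomorphism, $\imath(\End(G_A))\cong\End(G_A)$ as rings, and tensoring with $\bQ$ is exact, giving $\End(G_A)\otimes_\bZ\bQ\cong\imath(\End(G_A))\otimes_\bZ\bQ\cong\bQ(\la)=\bQ(K)=K$.

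First I would observe that $\bZ[\la^{\pm1}]\otimes_\bZ\bQ=\bQ[\la^{\pm1}]$, the subring of $K=\bQ(\la)$ generated by $\la$ and $\la^{-1}$ over $\bQ$. Since $h_A$ is irreducible, $\la$ is an algebraic number and $\bQ[\la]=\bQ(\la)=K$ is already a field (every nonzero element of $\bQ[\la]$ is invertible in the finite field extension $K/\bQ$), so a fortiori $\bQ[\la^{\pm1}]=K$. Next, for the upper bound, $\cO_K[\la^{-1}]\otimes_\bZ\bQ$ is the subring of $K$ generated by $\cO_K$ and $\la^{-1}$ over $\bQ$; but $\cO_K\otimes_\bZ\bQ=K$ already (the ring of integers spans $K$ over $\bQ$), so this is just $K$. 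Hence both outer terms equal $K=\bQ(\la)$, and the squeeze argument closes the proof. Alternatively — and perhaps more cleanly — one can invoke Corollary \ref{cor:integ} directly: $\imath(\End(G_A))$ is a finitely generated $\bZ[\la^{\pm1}]$-module containing $\bZ[\la^{\pm1}]$, so $\imath(\End(G_A))\otimes_\bZ\bQ$ is a finitely generated $\bQ[\la^{\pm1}]$-module, i.e.\ a finite-dimensional $K$-vector space, sitting inside $K\otimes_\bZ\bQ = K$; being a nonzero $K$-submodule of the one-dimensional $K$-space $K$, it equals $K$.

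I do not expect a serious obstacle here; the statement is essentially a formal consequence of the inclusions already derived, and the only thing to be careful about is that tensoring with $\bQ$ over $\bZ$ genuinely commutes with the ring structure and with the inclusions — which is immediate from flatness of $\bQ$ and the fact that $\imath$ is a ring homomorphism. The mild subtlety, if any, is purely bookkeeping: making sure that $\imath(\End(G_A))$ is treated as an actual subring of $K$ (not merely of $\cO_{K,\la}$) so that the tensor products are all taken inside the common ambient field $K\otimes_\bZ\bQ=K$, and recording that $\End(G_A)\otimes_\bZ\bQ\cong\imath(\End(G_A))\otimes_\bZ\bQ$ because $\imath$ is injective and $-\otimes_\bZ\bQ$ is exact. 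With that in place the isomorphism $\End(G_A)\otimes_\bZ\bQ\cong\bQ(\la)$ follows.
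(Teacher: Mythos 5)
Your argument is correct and is essentially the one the paper intends: Corollary \ref{cor:mine} is left as an immediate consequence of the sandwich $\bZ[\la^{\pm 1}]\subseteq\imath(\End(G_A))\subseteq\cO_{K,\la}$ established before Corollary \ref{cor:integ}, and your squeeze after tensoring with $\bQ$ (or the one-line variant via a nonzero $K$-submodule of $K$) is exactly that reasoning, with the flatness/injectivity bookkeeping correctly handled.
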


\subsection{$2$-dimensional case}\label{ss:22irr}
The approach in the proof of Proposition \ref{pr:end} can be made more precise. We demonstrate it in the case $n=2$. 
Assume $A\in\M_2(\bZ)$ is non-singular with an irreducible characteristic polynomial $h_A\in\bZ[x]$ and $\la\in\overline{\bQ}$ is a root of $h_A$. Also, let  
$\cP'\ne\emptyset$, equivalently, there exists a prime $p\in\bN$ that divides $\det A$ and does not divide $\tr A$. 
In the notation of the proof of Proposition \ref{pr:end}, we have that
%\bbe\label{eq:matr}
\begin{eqnarray}\label{eq:matr}
&& A=M\left(\begin{matrix}
\la & 0 \\
0 & \sg(\la)
\end{matrix}
\right)M^{-1},\quad 
M=\left(\begin{matrix}
{\bf u} & \sg({\bf u})
\end{matrix}
\right),  \nonumber \\
&& X=\left(\begin{matrix}
x & 0 \\
0 & \sg(x)
\end{matrix}
\right), \quad T(x)=MXM^{-1},
\end{eqnarray}
where ${\bf u}\in(\cO_K)^2$ is an eigenvector of $A$ corresponding to $\la$ written as a column,  $K=\bQ(\la)$ is a quadratic extension of $\bQ$, 
$x\in K$, and $\sg\in\Gal(K/\bQ)$ is the only non-trivial element. Moreover, there exist a finite extension  $L$ of $K$ and $P\in\GL_2(\cO_L)$ such that
\bbe\label{eq:t2}
PM=\left(\begin{matrix}
1 & u \\
0 & v
\end{matrix}
\right),\quad
PTP^{-1}=
\left(\begin{matrix}
x & w \\
0 & \sg(x)
\end{matrix}
\right), \quad w(x)=\frac{u(\sg(x)-x)}{v},
\ee
where 
$u,v\in \cO_L$, $\sg(\la)-\la=vv'$ for some $v'\in\cO_L$, and the ideal generated by $u$ and $v$ in $\cO_L$ is $\cO_L$, denoted by $(u,v)=\cO_L$. This follows from the fact that for any number field $K$ there exists a finite extension $L$ of $K$ such that every ideal of $\cO_K$ becomes principal in $\cO_L$ (see {\it e.g.}, \cite[p. 4, Theorem 2]{nt} and \cite[Corollary A.2]{s1}). In particular, if $\cO_K$ is a principal ideal domain, then one can take $L=K$. Denote by $\cS'$ the set of  all prime ideals of $\cO_L$ lying above all primes in $\cP$. 
%It is well-known that the index $[\cO_K:\bZ[\la]]$ of $\bZ[\la]$ in $\cO_K$ is finite. 
%Enlarging $L$ if necessary, without loss of generality, we can assume that $v$ can be decomposed as $v=v_1v_2$, $v_1,v_2\in\cO_L$ with $v_1\in\cU_{L,\cS'}$ ({\it i.e.}, every prime ideal of $\cO_L$ dividing $v_1$ is from $\cS'$) and every prime ideal of $\cO_L$ dividing $v_2$ is not in 
%$\cS'$.

\sbr

%??? We now explore whether the embedding in Proposition \ref{pr:end} is surjective. In the next proposition we give sufficient conditions in the case when $n=2$.

\begin{prop}\label{pr:discr}
Assume $A\in\M_2(\bZ)$ is non-singular with an irreducible characteristic polynomial $h_A\in\bZ[x]$ and $\cP'\ne\emptyset$. Then
\bbe\label{eq:imath2}
\imath(\End(G_A))=\{x\in\cO_{K,{\la}}\,\vert\, T(x)=MXM^{-1}\in\M_2(\caR)\}.
\ee
%Let $D=m^2d$ be the discriminant of $h_A$, where $m\in\bN$ and $d\in\bZ$ is square-free. 
Let $[\cO_K:\bZ[\la]]=l_1l_2$, where $l_1,l_2\in\bN$, $\rad(l_1)$ divides $\det A$, $(l_2,\det A)=1$. 
Let $K=\bQ(\sqrt{d})$, where $d\in\bZ$ is square-free, and let $\{1,\om\}$ be the integral basis of $\cO_K$ with $\om=(1+\sqrt{d})/2$ if 
$d\equiv 1\,(\text{mod }4)$ and $\om=\sqrt{d}$ otherwise. Then $\imath(\End(G_A))$ is generated over $\bZ[\la^{\pm 1}]$ by $\{1,\al\om\}$, where $\al\in\bN$ divides $l_2$. In particular, $\al$ is the smallest natural number
such that %$v_2$ divides $\al(\sg(\om)-\om)$.
% satisfying 
\bbe\label{eq:val}
\frac{\al(\sg(\om)-\om)}{v}\in\cO_{L,\cS'}.
\ee
% where $\imath$ and $\cO_{K,\cS}$ are defined in Proposition $\ref{pr:end}$.
\end{prop}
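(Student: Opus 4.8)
The plan is to use the criterion of Lemma \ref{l:simple} (via the description in \eqref{eq:attm0}) together with the explicit upper-triangularization in \eqref{eq:t2}. First I would establish \eqref{eq:imath2}. By Proposition \ref{pr:end}, every $T\in\End(G_A)$ has the form $T=T(x)=MXM^{-1}$ for a unique $x\in\cO_{K,\la}$, so $\imath(\End(G_A))\subseteq\cO_{K,\la}$. Conversely, given $x\in\cO_{K,\la}$ with $T(x)\in\M_2(\caR)$, I would verify the remaining hypothesis of Lemma \ref{l:simple}, namely $T(x)\in\End(\overline G_{A,p})$ for every $p\in\cP'$. Since $h_A$ is irreducible and $n=2$, we have $t_p=1$ for each $p\in\cP'$, and the argument in the proof of Proposition \ref{pr:end} shows $T(x)(\cX_{A,\fp})\subseteq\cX_{A,\fp}$ automatically (this is condition \eqref{eq:class0}, which holds since $\cX_{A,\fp}=\Span_F({\bf u})$ is a $T(x)$-eigenline). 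For the reduced part one checks \eqref{eq:attm0}: for each $m$ one needs $k_m$ with $A^{k_m}T(x)A^{-m}\in\M_2(\bZ_p)$. Working in $L_{\fp}$ over $p\in\cP'$ and using \eqref{eq:t2}, $A^{k}T(x)A^{-m}$ is conjugate by $P$ to an upper-triangular matrix with diagonal entries $x\la^{k-m},\sg(x)\sg(\la)^{k-m}$ and off-diagonal entry a multiple of $w(x)=u(\sg(x)-x)/v$ times $\la^{k-m}$ or $\sg(\la)^{k-m}$; since $p\in\cP'$ divides exactly one of $\la,\sg(\la)$ (as $p\nmid\tr A$ but $p\mid\det A$), for suitable large $k_m$ all these entries land in $\cO_{L,\fp}$, and then intersecting back with $K$ gives the claim. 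Thus \eqref{eq:imath2} holds.

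Next I would analyze the lattice $Y=\imath(\End(G_A))\cap\cO_K$, which by Corollary \ref{cor:integ} satisfies $\bZ[\la]\subseteq Y\subseteq\cO_K$ and determines $\imath(\End(G_A))=\bZ[\la^{\pm1}]\cdot Y$. Writing $[\cO_K:\bZ[\la]]=l_1 l_2$ with $\rad(l_1)\mid\det A$ and $(l_2,\det A)=1$, I would first dispose of the $l_1$-part: any element of $\cO_K$ whose index-obstruction is supported on primes dividing $\det A$ is already a $\bZ[\la^{\pm1}]$-combination of elements of $\bZ[\la]$, because inverting $\la$ (hence the primes of $\cP$) inside $K$ absorbs exactly those denominators — concretely, if $\la^i y\in\bZ[\la]$ with the relevant index a $\cP$-number, $y$ is recovered over $\bZ[\la^{\pm1}]$. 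So the index $[\imath(\End(G_A)):\bZ[\la^{\pm1}]]$ is controlled purely by the $l_2$-part, and $Y\otimes\bZ_{(\ell)}$ for $\ell\mid l_2$ is where the constraint $T(x)\in\M_2(\caR)$ bites. Using the integral basis $\{1,\om\}$ of $\cO_K$, every $x\in\cO_{K,\la}$ satisfying \eqref{eq:imath2} can be written (mod $\bZ[\la^{\pm1}]$) as $\beta\om$ with $\beta\in\bZ$, and the set of admissible $\beta$ is an ideal $\al\bZ$ of $\bZ$; I would identify $\al$ as the smallest positive integer with $\al\om$ admissible, and show $\al\mid l_2$ by comparing with $\cO_K\supseteq Y\supseteq\bZ[\la]$ and noting $l_2\om\in\bZ[\la]$-type membership forces $l_2$ itself to be admissible.

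Finally I would pin down $\al$ by the valuation condition \eqref{eq:val}. The membership $T(\al\om)\in\M_2(\caR)$, after conjugation by $P$ as in \eqref{eq:t2}, amounts to the diagonal entries $\al\om,\al\sg(\om)$ lying in $\cO_{K,\la}$ — automatic — together with the off-diagonal entry $w(\al\om)=\al\, u(\sg(\om)-\om)/v$ being $\cS'$-integral in $\cO_L$, i.e. $\al(\sg(\om)-\om)/v\in\cO_{L,\cS'}$, which is exactly \eqref{eq:val}. Here I use that $(u,v)=\cO_L$ so $u$ is a unit away from $\cS'$ and can be dropped, and that the conditions at primes above $\cP'$ versus the "$\caR$" (i.e. $\cP$) integrality at the remaining primes match up because $\cP\setminus\cP'$ contributes nothing ($t_p=2$ there, $\overline G_{A,p}=\bQ_p^2$). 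So $\al$ is precisely the least natural number making \eqref{eq:val} hold, completing the proof.

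\medskip

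I expect the main obstacle to be the bookkeeping in the second paragraph: cleanly separating the $l_1$ and $l_2$ parts of the index $[\cO_K:\bZ[\la]]$ and arguing that inverting $\la$ kills exactly the $l_1$-obstruction while leaving the $l_2$-obstruction to be measured by the $\caR$-integrality of the off-diagonal term. The valuation computation \eqref{eq:val} itself is then a localized, prime-by-prime check over $\cS'$ and should be routine given the normal form \eqref{eq:t2}.
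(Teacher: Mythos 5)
Your first and third steps are essentially the paper's own argument and are sound: \eqref{eq:imath2} follows from Proposition \ref{pr:end} (which applies because $t_p=1$ for $p\in\cP'$, so $(2,t_p)=1$ is automatic) together with Lemma \ref{l:simple}, and the translation of ``$T(b\om)\in\M_2(\caR)$'' into \eqref{eq:val} via the triangular form \eqref{eq:t2} and $(u,v)=\cO_L$ is exactly the paper's computation. (The paper checks the local condition at $p\in\cP'$ by observing that in the $P$-coordinates the localized module is $L_{\fp}\oplus\cO_{\fp}$, so only $\sg(x)\in\cO_{\fp}$ matters, rather than running \eqref{eq:attm0}; your version also works, though note the off-diagonal entry of $PA^{k}T(x)A^{-m}P^{-1}$ is $w(x\la^{k-m})$, not a multiple of $w(x)$ --- harmless, since $\val_{\fp}(v)=0$ at the chosen $\fp$.)

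The genuine gap is the step $\al\mid l_2$, i.e.\ the claim that $l_2\om$ is admissible, and your justification for it rests on two incorrect assertions. First, inverting $\la$ does \emph{not} invert the primes of $\cP$: $\bZ[\la^{\pm 1}]$ lies in $\cO_{K,\la}$, where only the primes of $\cS_{\la}$ are inverted, so the ``$l_1$-obstruction'' is not absorbed into $\bZ[\la^{\pm 1}]$ (in general $1/\det A\notin\bZ[\la^{\pm 1}]$). Second, $l_2\om$ is not in $\bZ[\la]$ when $l_1>1$: one has $\la=c+l_1l_2\om$ with $c\in\bZ$, so only $l_1l_2\om$ enjoys the ``$\bZ[\la]$-type membership'' you invoke; from $\bZ[\la]\subseteq\imath(\End(G_A))$ you only get $l_1l_2\in I$, hence the weaker bound $\al\mid l_1l_2$. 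The missing argument, which is the arithmetic heart of the paper's proof, is: $v$ divides $\sg(\la)-\la$ by \eqref{eq:t2}, and $\sg(\la)-\la=\pm[\cO_K:\bZ[\la]](\sg(\om)-\om)=\pm l_1l_2(\sg(\om)-\om)$, so $l_1l_2(\sg(\om)-\om)/v\in\cO_L$; since $\rad(l_1)$ divides $\det A$, $l_1$ is a unit in $\cO_{L,\cS'}$ (it is there, not in $\bZ[\la^{\pm 1}]$, that the $l_1$-part is absorbed), whence $l_2(\sg(\om)-\om)/v\in\cO_{L,\cS'}$, i.e.\ $l_2\in I$ and $\al\mid l_2$. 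Without this valuation computation your sketch does not separate $l_1$ from $l_2$, which is precisely the point you flagged as the main obstacle.
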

\begin{proof}
%It is easy to see that if the discriminant $D$ of $h_A$ is square-free, then $\tr A\in\bZ$ is odd, $D\equiv 1\,(\text{mod }4)$, and $\cO_K=\bZ[\la]$. 
Let $x\in\cO_{K,\la}$ and let $T=MXM^{-1}$, where $M$ and $X$ are given by \eqref{eq:matr}. 
It follows from the definition of $\cO_{K,\la}$ that $x=y\la^{-i}$ for some $y\in\cO_K$ and $i\in\bN\cup\{0\}$. 
%Here $y=a+b\om$ for $a,b\in\bZ$. 
%Since $\bZ[\la^{\pm 1}]\subseteq \imath(\End(G_A))$,
%$x\in\imath(\End(G_A))$ if and only if $b\om\in\imath(\End(G_A))$.
Moreover, 
$T=T(x)\in\End(G_A)$ if and only if $T\in\M_2(\caR)$. Indeed, the necessary part follows from Section \ref{ss:loc}. To prove the sufficient part, By Lemma \ref{l:simple}, it is enough to show that $T\in\End(\overline{G}_{A,p})$ for any $p\in\cP'$. For any $p\in\cP'$ there exists a prime ideal $\fp$ of $\cO_L$ above $p$ such that $\fp$ divides $\la$ and $\fp$ does not divide $\sg(\la)$. Let $L_{\fp}$ denote the completion of $L$ with respect to $\fp$ with its ring of integers $\cO_{\fp}$. We have that $T(\overline{G}_{A,p})\subseteq \overline{G}_{A,p}$ if and only if $T(\overline{G}_{A,p}\otimes_{\bZ_p}\cO_{\fp})\subseteq \overline{G}_{A,p}\otimes_{\bZ_p}\cO_{\fp}$
if and only if $PTP^{-1}V\subseteq V$ with $V=P(\overline{G}_{A,p}\otimes_{\bZ_p}\cO_{\fp})$. Note that 
$$
V=\left\{(PAP^{-1})^{k}{\bf x}\, \vert \, {\bf x}\in\cO_{\fp}^2,\,k\in\bZ\right\}
=\left\{\left(\begin{matrix} \al & \be \end{matrix}\right)^t \, \vert \, \al\in L_{\fp},\,\be\in\cO_{\fp}\right\}=L_{\fp}\oplus\cO_{\fp},
$$
since $P\in\GL_2(\cO_L)$. Now it is clear that $PTP^{-1}V\subseteq V$, since $PTP^{-1}$ has the form \eqref{eq:t2} and 
$\sg(x)=\sg(y)\sg(\la)^{-i}\in\cO_{\fp}$. Indeed, $\sg(y)\in\cO_K$, $\cO_K\hrar\cO_L\hrar \cO_{\fp}$ and $\sg(\la)$ is a unit in $\cO_{\fp}$, since $\fp$ does not divide $\sg(\la)$. This shows that $T$ defined by \eqref{eq:matr} with $x\in\cO_{K,\la}$ belongs to $\End(G_A)$ if and only if $T\in\M_2(\caR)$, {\it i.e.}, \eqref{eq:imath2} holds. 

\sbr

We have that $x=y\la^{-i}$, $y\in\cO_K$, and hence $y=a+b\om$ for $a,b\in\bZ$. 
Since $\bZ[\la^{\pm 1}]\subseteq \imath(\End(G_A))$,
$x\in\imath(\End(G_A))$ if and only if $b\om\in\imath(\End(G_A))$ if and only if 
$T(b\om)\in\M_2(\caR)$ by  \eqref{eq:imath2}  if and only if $w(b\om)\in\cO_{L,\cS'}$ by \eqref{eq:t2} if and only if
%$(\sg(x)-x)/v\in\cO_{L,\cS'}$  if and only if the $\cO_L$-ideal $(v_2)$ generated by $v_2$ divides the 
%$\cO_L$-ideal $(\sg(x)-x)$ generated by $\sg(x)-x$ or, equivalently,
\bbe\label{eq:xx}
\frac{b(\sg(\om)-\om)}{v}\in\cO_{L,\cS'},
\ee 
since $(u,v)=\cO_L$ by assumption. 
%Then $\sg(x)-x=b(\sg(\om)-\om)$. 
It is well-known and one can also easily check that $\sg(\la)-\la=\pm[\cO_K:\bZ[\la]](\sg(\om)-\om)$.
%\bbe\label{eq:bb}
%\frac{\sg(x)-x}{v}=\frac{b(\sg(\om)-\om)}{v}.%=-\frac{bj\sqrt{d}}{v},
%\ee
Since $v$ divides $\sg(\la)-\la$ by \eqref{eq:t2} and $l_1$ is a unit in $\cO_{L,\cS'}$, \eqref{eq:xx} holds for $b=l_2$. Also, the set 
$$
I=\{b\in\bZ\,\vert\,b\om\in\imath(\End(G_A)) \}
$$ 
 is an ideal of $\bZ$, 
$l_2\in I$, and therefore 
$I=(\al)$ is generated by the smallest $\al\in\bN$ such that $\al\in I$. In particular, $\al$ divides $l_2$.
\end{proof}

\begin{cor}\label{cor:discr}
Assume $A\in\M_2(\bZ)$ is non-singular with an irreducible characteristic polynomial $h_A\in\bZ[x]$ and $\cP'\ne\emptyset$. If $\rad[\cO_K:\bZ[\la]]$ divides $\det A$, then $$\imath(\End(G_A))=\cO_{K,{\la}}.$$
\end{cor}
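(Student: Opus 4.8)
The plan is to deduce Corollary \ref{cor:discr} directly from Proposition \ref{pr:discr}. Recall that Proposition \ref{pr:discr} describes $\imath(\End(G_A))$ as the $\bZ[\la^{\pm 1}]$-module generated by $\{1,\al\om\}$, where $\al\in\bN$ divides $l_2$ and $l_2$ is the part of the index $[\cO_K:\bZ[\la]]$ coprime to $\det A$ (so $[\cO_K:\bZ[\la]]=l_1l_2$ with $\rad(l_1)\mid\det A$ and $(l_2,\det A)=1$). First I would observe that the hypothesis $\rad[\cO_K:\bZ[\la]]\mid\det A$ forces $l_2=1$: every prime divisor of $[\cO_K:\bZ[\la]]$ divides $\det A$, so the factor $l_2$, being coprime to $\det A$ by construction, can have no prime divisors and hence equals $1$.

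Next, since $\al$ divides $l_2=1$, we get $\al=1$. Therefore, by Proposition \ref{pr:discr}, $\imath(\End(G_A))$ is generated over $\bZ[\la^{\pm 1}]$ by $\{1,\om\}$. Since $\{1,\om\}$ is the integral basis of $\cO_K$, the $\bZ[\la^{\pm 1}]$-module generated by $\{1,\om\}$ is exactly $\bZ[\la^{\pm 1}]\cdot 1+\bZ[\la^{\pm 1}]\cdot\om=\cO_K[\la^{-1}]$, which by \eqref{eq:Sint} equals $\cO_{K,\la}$. Here I would note the two containments explicitly: $\bZ[\la^{\pm 1}]+\bZ[\la^{\pm 1}]\om\subseteq\cO_K[\la^{-1}]$ because $1,\om\in\cO_K$ and $\la^{-1}\in\cO_K[\la^{-1}]$; conversely, any element of $\cO_K[\la^{-1}]$ has the form $(a+b\om)\la^{-i}$ with $a,b\in\bZ$, $i\geq 0$, which lies in $\bZ[\la^{\pm 1}]+\bZ[\la^{\pm 1}]\om$. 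Combining, $\imath(\End(G_A))=\cO_{K,\la}$, as claimed.

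There is essentially no obstacle here: the corollary is a clean specialization of Proposition \ref{pr:discr} once one unwinds the definition of $l_2$. The only point requiring a line of care is the identification of the $\bZ[\la^{\pm 1}]$-span of an integral basis of $\cO_K$ with $\cO_K[\la^{-1}]$, which is immediate from the fact that $\{1,\om\}$ spans $\cO_K$ over $\bZ$ and localization commutes with finitely generated module generation. Alternatively, one could invoke Corollary \ref{cor:integ}: the hypothesis does not quite give $\cO_K=\bZ[\la]$, so that shortcut is unavailable, and it is cleaner to argue via Proposition \ref{pr:discr} as above.
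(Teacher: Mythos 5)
Your proof is correct and follows essentially the same route as the paper: specialize Proposition \ref{pr:discr}, note the hypothesis forces $l_2=1$ and hence $\al=1$, and identify the $\bZ[\la^{\pm 1}]$-span of $\{1,\om\}$ with $\cO_K[\la^{-1}]=\cO_{K,\la}$. The extra lines spelling out the two containments are fine but not needed beyond what the paper states.
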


\begin{proof}
If $\rad[\cO_K:\bZ[\la]]$ divides $\det A$, then in the notation of Proposition \ref{pr:discr}, $l_2=1$ and hence $\al=1$. Then, by Proposition \ref{pr:discr}, 
$$\imath(\End(G_A))=\left\{c+d\om\,\vert\, c,d\in \bZ [\la^{\pm 1}] \right\}=\cO_{K,{\la}}.$$
\end{proof}
\begin{rem}\label{rem:index}
Note that if $D=m^2\cdot d$ is the discriminant of $h_A$, where $m\in\bN$ and 
$d\in\bZ$ is square-free, then 
$$
[\cO_K:\bZ[\la]]=\begin{cases} m, & d\equiv 1\,(\text{mod }4) \\
\frac{m}{2}, & \text{otherwise}.
\end{cases}
$$ 
%Recall the connection between $m$ and the index $[\cO_K:\bZ[\la]]$.
%%comes from, it appears as the index $[\cO_K:\bZ[\la]]$. Primes dividing $[\cO_K:\bZ[\la]]$ appear as special primes when their decomposition in $\cO_K$ is considered (Neukerich reference?). Indeed, 
%Let $h_A=x^2+ax+b$, 
%$D=a^2-4b=m^2\cdot d$, $d$ is square-free, and 
%$\la=\frac{-a+\sqrt{D}}{2}=\frac{-a+m\sqrt{d}}{2}$.
%
%\sbr
%
%If $d\equiv 1\,(\text{mod }4)$, then $\cO_K=\bZ[\om]$, $\om=\frac{1+\sqrt{d}}{2}$, 
%$\la=m\om-\frac{m+a}{2}$, where $a+m$ is even. Hence, $[\cO_K:\bZ[\la]]=m$. 
%
%\sbr
%
%If $d\not\equiv 1\,(\text{mod }4)$, then $d\equiv 2,3\,(\text{mod }4)$, since $d$ is square-free. Then $\cO_K=\bZ[\om]$, 
%$\om=\sqrt{d}$, and 
%$\la=\frac{m\om-a}{2}$, where both $a$, $m$ are even. Hence, $[\cO_K:\bZ[\la]]=\frac{m}{2}$.
%Assume $\la=(\tr A+\sqrt{D})/2$, so that
%$$
%\sg(\la)-\la=-\sqrt{D}=-m\sqrt{d}.
%$$
\end{rem}

\begin{example}
In this example we show that the endomorphism ring of $G_A$ does not determine $G_A$ up to an isomorphism, {\it i.e.}, there exist $A,B\in\M_n(\bZ)$ such that $\End(G_A)\cong\End(G_B)$ as rings, but
$G_A\not\cong G_B$ as groups.
Consider a quadratic number field $K=\bQ(\la)$ defined by a root $\la\in\overline{\bQ}$ of an irreducible polynomial $h=x^2-x+13$ \cite[Number field 2.0.51.1]{db}. Since the class group of $K$ has order $2$, there are two $\operatorname{GL}_2(\bZ)$-conjugacy classes of matrices $[A]$, $[B]$, $A,B\in\M_2(\bZ)$, with $A$ corresponding to the trivial ideal $\cO_K$ and $B$ corresponding to the non-trivial ideal $I=\bZ[3,\la+1]$, a generator of the class group. For example,
$$
A=\left(\begin{matrix}
0 & 1 \\
-13 & 1
\end{matrix}
\right),\quad
B=\left(\begin{matrix}
-1 & 3 \\
-5 & 2
\end{matrix}
\right).
$$
Both $A$ and $B$ share the same characteristic polynomial $h$ with eigenvalues $\la_{1}$, $\la_2$, so that $\tr A=\tr B=1$, $\det A=\det B=13$, the discriminant $D=-3\cdot 17$ is square-free, $\la_{1,2}=\frac{1\pm\sqrt{D}}{2}$, 
$\caR=\caR(A)=\caR(B)=\{r13^s\,\vert \, r,s\in\bZ\}$. Moreover, $A$, $B$ are conjugated by a matrix from $\GL_2(\bQ)$, but there is no matrix $S\in\operatorname{GL}_2(\bZ)$ such that $A=SBS^{-1}$. By Corollary \ref{cor:integ} and Remark \ref{rem:index}, 
$\imath(\End(G_A))=\imath( \End(G_B))=\cO_{K,\la}$, where $\imath$ is defined by the choice of $\la$ ($\la_1$ or $\la_2$). Thus, 
$\End(G_A)\cong \End(G_B)$ as rings. However, $G_A\not\cong G_B$ as groups. 
Indeed, assume $G_A\cong G_B$. By \cite[p. 207, Corollary 6.3]{s1}, $G_A\cong G_B$ if and only if there exists $T\in\GL_2(\caR)$ such that $A=TBT^{-1}$, where $T=MXN^{-1}$, $X=\diag\left(\begin{matrix} x&\sg(x)\end{matrix}\right)$
for some $x\in K$, $A=M\La M^{-1}$, $B=N\La N^{-1}$, $\La=\diag\left(\begin{matrix} \la&\sg(\la)\end{matrix}\right)$,
$$
M=\left(\begin{matrix}
1 & 1 \\
\la & \sg(\la)
\end{matrix}
\right),\quad
N=\left(\begin{matrix}
3 & 3 \\
\la+1 & \sg(\la)+1
\end{matrix}
\right).
$$
In particular, $\det T=N_{K/\bQ}(x)\cdot\det M\cdot(\det N)^{-1}=N_{K/\bQ}(x)\cdot 3^{-1}$, where $N_{K/\bQ}(x)$ denotes the norm of 
$x$. Since $T\in\GL_2(\caR)$, $\det T\in\caR^{\times}$, which implies 
\bbe\label{eq:norm}
N_{K/\bQ}(x)=\pm 13^k\cdot 3, \quad k\in\bZ.
\ee
It is known that $(13)=I_1\cdot I_2$, where $I_1,I_2$ are principal ideals of $\cO_K$ and $(3)=I^2$ \cite{sage}.  
Thus, it follows from \eqref{eq:norm} that $(x)=I_1^s\cdot I_2^t\cdot I$ for some $s,t\in\bZ$, which implies that $I$ is principal. This is a contradiction, since $I$ has order $2$ in the class group of $K$ and hence it is not principal. Thus, $G_A\not\cong G_B$.
\end{example}

\begin{example}
In this example, we show that the condition ``$\rad[\cO_K:\bZ[\la]]$ divides $\det A$"  in Corollary \ref{cor:discr}
 is not necessary for 
$\imath(\End(G_A))=\cO_{K,{\la}}$. Here, in the notation of Proposition \ref{pr:discr}, $l_2\ne 1$ and $\al=1$.  Let
$$
A=\left(\begin{matrix}
-1 & 3 \\
3 & 2
\end{matrix}
\right),\quad h_A(x)=x^2-x-11,
$$
$D=3^2\cdot 5$ is the discriminant of $h_A$. Hence, by Remark \ref{rem:index}, $m=3$, $d=5$, 
$\om=\frac{1+\sqrt{5}}{2}$, $\la=\frac{1+3\sqrt{5}}{2}$, $[\cO_K:\bZ[\la]]=3$, and $l_2=3$. Note that $\cO_K$ is a principal ideal domain, hence, in the notation of Section \ref{ss:22irr}, $L=K$, 
$$
M=\left(\begin{matrix}
1 & 1 \\
\om & \sg(\om)
\end{matrix}
\right),\quad 
PM=\left(\begin{matrix}
1 & 1 \\
0 & \sg(\om)-\om
\end{matrix}
\right),\quad P\in\GL_2(\cO_K),
$$
and $v=\sg(\om)-\om$. Thus, \eqref{eq:val} holds for $\al=1$ and $\imath(\End(G_A))=\cO_{K,{\la}}=\cO_K[\la^{-1}]$.
\end{example}

\begin{example} In this example, we demonstrate how \eqref{eq:imath2} can be used to determine 
$\End(G_A)$ for a rational canonical form $A\in\M_2(\bZ)$ of a monic irreducible quadratic polynomial 
$h_A=x^2+\be x+\ga\in\bZ[x]$. By Lemma \ref{l:easy}, $\End(G_A)=\M_2(\bZ)$ if $\ga=\pm 1$,
$\End(G_A)=\M_2(\caR)$ if $\ga\ne\pm 1$ and $\cP'=\emptyset$. Assume, $\cP'\ne\emptyset$, equivalently,
$\rad(\ga)$ does not divide $\rad(\be)$. Then, Proposition \ref{pr:discr} can be applied. 
Let
$$
A=\left(\begin{matrix}
0 & -\ga \\
1 & -\be
\end{matrix}
\right), \quad 
M=\left(\begin{matrix}
-\ga & -\ga \\
\la & \sg(\la)
\end{matrix}
\right),\quad A=M\left(\begin{matrix}
\la & 0 \\
0 & \sg(\la)
\end{matrix}
\right)M^{-1}.
$$
For $x=b\om$, $b\in\bZ$, one can check that $T(x)=MXM^{-1}\in\M_2(\caR)$ if and only if $l_2$ divides $b$.
Therefore, by Proposition \ref{pr:discr}, $\imath(\End(G_A))$ is generated by $\{1,l_2\om\}$ as a $\bZ[\la^{\pm 1}]$-module. 
More precisely, if $T_0=MX_0M^{-1}$ with $X_0=\diag\left(\begin{matrix} l_2\om& l_2\sg(\om)\end{matrix}\right)$,
then
$$
\End(G_A)=\left\{\sum_i b_iA^{m_i}+\sum_j c_jA^{n_j}T_0\,\,\Big\vert \,\, \forall \, b_i,m_i,c_j,n_j\in\bZ,\, i,j\in\bN  \right\},
$$
where each sum has finitely many non-zero terms. 
%Since $h_A(A)=0$ by the Cayley--Hamilton theorem, $A^2=-\al A-\be\id$, so one can take each $m_i,n_j\leq 1$.
%One can also write $A^{-2}=-{\al}{\be}^{-1}A^{-1}+\be^{-1}\id$, but the coefficients are not integers. 
Since $1$ and $l_2\om$ are $\bZ[\la^{\pm 1}]$-dependent, $\End(G_A)$ is a finitely generated $\bZ[\la^{\pm 1}]$-module of rank $1$. 

%
%This is an example, where $\imath(\End(G_A))=\bZ[\la^{\pm 1}]$. Let
%$$
%A=\left(\begin{matrix}
%0 & 1 \\
%11 & 1
%\end{matrix}
%\right),
%$$
%where $\la=\frac{1+3\sqrt{5}}{2}$, $\om=\frac{1+\sqrt{5}}{2}$, so that $m=3$. We will show that $\al=3$, so that $\{1,3\om\}$ is a basis of $\imath(\End(G_A))$ as a $\bZ[\la^{\pm 1}]$-module, {\it i.e.},  
%$\imath(\End(G_A))=\bZ[\la^{\pm 1}]$. Indeed, applying row operations to $M$, we get
%$$
%M=\left(\begin{matrix}
%1 & 1 \\
%\la & \sg(\la)
%\end{matrix}
%\right),\quad 
%PM=\left(\begin{matrix}
%1 & 1 \\
%0 & \sg(\la)-\la
%\end{matrix}
%\right),\quad P\in\GL_2(\cO_K).
%$$
%Here, $v=\sg(\la)-\la=-3\sqrt{5}$ and $\sg(\om)-\om=-\sqrt{5}$ and hence
%$$\frac{\al(\sg(\om)-\om)}{v}=\frac{\al}{3}\in\cO_{L,\cS'}\cap\bQ=\caR$$
%if and only if $3$ divides $\al$, since $\det A=-11$ and $\caR=\{m\cdot 11^k\,\vert\,m,k\in\bZ\}$. Thus, $\al=3$.
\end{example}

\section{Character groups, solenoids, and $\cS$-integer dynamical systems} 
\subsection{Character groups} 
In this section, we describe the Pontryagin dual $\widehat{G_{A}}$ of $G_A$. %under the assumption that the characteristic polynomial of $A$ is irreducible. 
Here, $G_A$ is considered as a topological group endowed with the discrete topology and 
 $\widehat{G_{A}}$ is a topological group with the underlying space consisting of continuous group homomorphisms from $G_A$ to a circle $\bT^1$ endowed with the compact-open topology.

\sbr

Let $A\in\M_n(\bZ)$ be non-singular
and let $h_A\in\bZ[t]$ be the characteristic polynomial of $A$. 
Let $h_A=h_1h_2\cdots h_s$,
where $h_1,\ldots,h_s\in\bZ[t]$ are irreducible of degrees $n_1,\ldots,n_s$, respectively.
%By
%Theorem \ref{th:trig} below, there exists $S\in\GL_n(\bZ)$ such that
%\bbe\label{eq:kkkk1}
%SAS^{-1}=\left(\begin{matrix}
%A_{1} & *  & \cdots & * \\
%0 & A_{2}  & \cdots & * \\
%\vdots & \vdots &  &\vdots \\
%0 & 0  & \cdots & A_{s}
%\end{matrix}
%\right),
%\ee
%where each $A_i\in\M_{n_i}(\bZ)$ has characteristic polynomial $h_i$, $i\in\{1,2,\ldots,s\}$.

\begin{lem}\cite[Lemma 8.1]{s2}\label{l:dense}
$G_A$ is dense in $\bR^n$ endowed with the standard topology if and only if %$A_i\not\in\GL_{n_i}(\bZ)$ for all $i\in\{1,2,\ldots,s\}$. Equivalently, $G_A$ is dense in $\bQ^n$ if and only if $\det A_i\ne\pm 1$ for all $i\in\{1,2,\ldots,s\}$ if and only if 
$h_i(0)\ne\pm 1$ for all $i\in\{1,2,\ldots,s\}$.
\end{lem}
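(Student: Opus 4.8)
The plan is to reduce the density of $G_A$ in $\bR^n$ to a question about each irreducible factor $h_i$ of $h_A$ and then to a classical fact about cyclic subgroups. First I would decompose $\bR^n$ (as a $\bZ[A]$-module, or equivalently a $\bZ[t]$-module via $t\mapsto A$) according to the factorization $h_A=h_1\cdots h_s$. After conjugating $A$ by an element of $\GL_n(\bZ)$ we may assume $A$ is in a block form compatible with this factorization, so that $\bR^n=V_1\oplus\cdots\oplus V_s$ with $\dim V_i=n_i$, each $V_i$ defined over $\bQ$ and $A$-invariant, and $A|_{V_i}$ having irreducible characteristic polynomial $h_i$. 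Correspondingly $\bZ^n$ is commensurable with a direct sum of lattices $L_i\subset V_i$, and $G_A$ with $\bigoplus_i G_i$ where $G_i=\{A^k\x\mid \x\in L_i,\ k\in\bZ\}$. Since density of a direct sum of subgroups in a direct product of Euclidean spaces is equivalent to density of each summand, it suffices to treat the case $s=1$, i.e. $h_A$ irreducible of degree $n$.

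So assume $h_A$ is irreducible with root $\la$. Here the structure from Section \ref{s:irredcharpoly} applies: $A=M\La M^{-1}$ with $\La=\diag(\sg_1(\la),\ldots,\sg_n(\la))$ and $M=(\sg_1(\u)\ \cdots\ \sg_n(\u))$, where $\u\in(\cO_K)^n$ is an eigenvector, $K=\bQ(\la)$. Pulling back by the $\bR$-linear isomorphism $M^{-1}$ does not change the density question (it changes the Euclidean structure but not the topology), so I would work in the coordinates where $A$ acts diagonally. Then $G_A$ contains $\bZ^n$, so $G_A$ is dense in $\bR^n$ iff its image in the torus $\bR^n/\bZ^n$ is dense; and since $G_A$ is generated over $\bZ$ by $\bZ^n$ together with the vectors $A^k\e_j$, this image is the closure of the cyclic-type subgroup generated by the classes of $A^k\e_j$. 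The key case is $n=1$: for $A=(d)$ with $d\in\bZ$, $d\neq 0$, the group $G_A=\bZ[1/d]$ is dense in $\bR$ iff $|d|\ge 2$, i.e. iff $d\ne\pm 1$, i.e. iff $h_A(0)=-d\ne\pm1$. For general $n$ with $h_A$ irreducible, I would argue that $G_A$ is dense in $\bR^n$ iff $h_A(0)=\pm\det A\ne\pm1$: if $\det A=\pm1$ then $G_A=\bZ^n$ is discrete, hence not dense; conversely if some prime $p$ divides $\det A$, then $\cP'\ne\emptyset$ (as $h_A$ is irreducible it cannot be $\equiv t^n\bmod p$ unless $p\nmid$ all lower coefficients — here I use irreducibility to ensure $h_A\not\equiv t^n$), so by \eqref{eq:zp} the localization $\overline{G}_{A,p}$ has a nontrivial $\bQ_p$-divisible part, which means $G_A$ contains elements with unbounded $p$-denominators in the eigenbasis directions; combined with the fact that $G_A$ is $\bZ[A]$-stable and $\bZ[A]\cdot\e$ spans $\bR^n$ for any $\e$ generating $\bZ^n$ as a $\bZ[A]$-module, one gets that the closure of $G_A$ is a closed subgroup of $\bR^n$ containing $\bZ^n$ and not discrete in the relevant directions, forcing it to be all of $\bR^n$. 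Translating back through $h_i(0)=\pm\det(A|_{V_i})$ gives the stated criterion $h_i(0)\ne\pm1$ for all $i$.

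I expect the main obstacle to be the converse direction in the irreducible case: showing that divisibility of $\det A$ by some prime actually forces density, rather than merely non-discreteness. The delicate point is to rule out the closure of $G_A$ being a proper closed subgroup — a priori it could be a lower-dimensional subgroup plus a lattice. The cleanest way around this is to exploit that $G_A$ is a module over $\bZ[A]$ and that, since $h_A$ is irreducible, $\bQ^n$ is a simple $\bQ[A]$-module; hence any nonzero $\bZ[A]$-submodule of $\bR^n$ that is not discrete must have closure an $A$-invariant (hence $\bR[A]$-invariant) closed subgroup of $\bR^n$ of positive dimension, and the only such subgroup containing $\bZ^n$ is $\bR^n$ itself, because $\bR[A]$ acts irreducibly on $\bR^n$ when no proper rational invariant subspace exists. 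I would either cite Lemma \ref{l:dense}'s source \cite[Lemma 8.1]{s2} for the detailed lattice argument, or spell out this module-theoretic reduction; in the write-up I would keep the reduction explicit and defer the one-variable computation and the $\bZ[A]$-irreducibility remark to short paragraphs.
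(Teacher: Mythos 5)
Your proposal has two genuine gaps. First, the reduction step is not available in the stated generality: you assume $\bR^n$ splits as a direct sum of rational $A$-invariant subspaces $V_i$ on which $A$ has irreducible characteristic polynomial $h_i$. That splitting exists only when $A$ is semisimple; if $h_A$ has a repeated irreducible factor and $A$ has nontrivial Jordan structure (e.g. $A=\left(\begin{smallmatrix}2&1\\0&2\end{smallmatrix}\right)$, $h_A=(t-2)^2$), only the primary decomposition is available, and there the blocks have characteristic polynomial $h_i^{m_i}$, not $h_i$. Your later argument leans precisely on ``$\bQ^n$ is a simple $\bQ[A]$-module,'' which fails for these powers, so the repeated-factor case is not covered. (A smaller slip: irreducibility of $h_A$ does not prevent $h_A\equiv t^n\ (\mathrm{mod}\ p)$ --- Eisenstein polynomials such as $t^2-2$ at $p=2$ --- so $\cP'$ may be empty; this is harmless for you, since $t_p\ge 1$ whenever $p\mid\det A$, which is all that non-discreteness needs, but the parenthetical as written is false.)

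Second, and more seriously, the crux of the hard direction is justified by a false claim: $\bR[A]$ does \emph{not} act irreducibly on $\bR^n$ when $h_A$ is irreducible over $\bQ$ (for $h_A=t^2-3$ the two real eigenlines are $A$-invariant), so ``the only closed $A$-invariant subgroup of positive dimension containing $\bZ^n$ is $\bR^n$'' does not follow from what you say. The step can be repaired, but only by an argument that actually uses the constant terms: write $H=\overline{G_A}=V\oplus\La$ with $V=H^{\circ}$ an $A$-invariant subspace and $\La$ discrete; since $\bZ^n\subseteq H$, the image of $H$ in $\bR^n/V$ is a full-rank lattice preserved by the automorphism induced by $A$, so that automorphism has a monic integer characteristic polynomial $g$ dividing $h_A$ with $g(0)=\pm1$; by unique factorization $g$ is a product of the $h_i$, and the hypothesis $h_i(0)\ne\pm1$ forces $g=1$, i.e. $V=\bR^n$. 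As written, your proof never makes contact with the hypothesis beyond $\abs{\det A}\ge 2$ in the irreducible case, and this quotient-lattice argument (which also works uniformly for characteristic polynomial a power of an irreducible, repairing the first gap) is the missing idea. Note the paper itself does not reprove the lemma but cites \cite[Lemma 8.1]{s2}, so the comparison here is with correctness rather than with an in-text proof.
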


In the case when $G_A$ is {\it not} dense in $\bR^n$, by Lemma \ref{l:dense}, 
there exist $f_1,f_2\in\bZ[t]$ such that $h_A=f_1f_2$ and $f_2(0)=\pm 1$.
%(equivalently, $G_{A^t}$ is not dense in $\bQ^n$ with respect to the topology on $\bQ^n$ induced from the standard topology on $\bR^n$ by \cite[Lemma 8.1]{s2}). 
Let $g_2\in\bZ[t]$ be of maximal degree such that there exists 
$g_1\in\bZ[t]$ with $h_A=g_1g_2$, $g_2(0)=\pm 1$. In other words, if $h_A=h_1h_2\cdots h_s$, where $h_1,\ldots,h_s\in\bZ[t]$ are irreducible, $h_i(0)\ne\pm 1$ for all $i\in\{1,\ldots,t\}$ and $h_j(0)=\pm 1$
for all $j\in\{t+1,\ldots,s\}$, $1\leq t<s$, then $g_1=h_{1}\cdots h_t$ and $g_2=h_{t+1}\cdots h_s$.
%If there exists $h_i$ such that $h_i(0)=\pm 1$, then  %of degrees $n_1,\ldots,n_s$ If there exists an irreducible component of  %$G_A$ is not dense in $\bQ^n$, 
Then there exists $S\in\GL_n(\bZ)$ such that 
\bbe\label{eq:A1}
SAS^{-1}=\left(\begin{matrix}
A_1 & * \\
0 & A_2
\end{matrix}\right),
\ee
where $A_1\in\M_{k}(\bZ)$ has characteristic polynomial $g_1$ and $A_2\in\M_{n-k}(\bZ)$ has 
characteristic polynomial $g_2$ 
\cite[p. 50, Thm. III.12]{n}. Thus, $G_{A_1}$ is dense in $\bR^k$ endowed with the standard  topology, $\det A_2=\pm 1$, and 
$G_{A_2}=\bZ^{n-k}$. Thus, the natural exact sequence 
$$0\rar G_{A_1}\rar G_{SAS^{-1}}\rar G_{A_2}\rar 0$$ splits, so that 
$$
S(G_{A})=G_{SAS^{-1}}\cong G_{A_1}\oplus\bZ^{n-k},\quad G_{A}\cong G_{A_1}\oplus\bZ^{n-k}.
$$
%\cite[Section 5.1 and Lemma 8.1]{s2}. 
Therefore, 
\bbe\label{eq:hats}
\widehat{G_{A}}\cong \widehat{G_{A_1}}\oplus\widehat{\bZ}^{n-k}\cong \widehat{G_{A_1}}\oplus {\bT}^{n-k}.
\ee
Therefore, to study the character group $\widehat{G_{A}}$, it is enough to consider the case when $G_A$ is dense in $\bR^n$  endowed with the standard  topology. For ${\bf y}\in\bR^n$, 
denote by $\La({\bf y})\in\widehat{\bR^n}$ the character of $\bR^n$ given by 
$$
\La({\bf y})({\bf x})=e^{2\pi i{\bf y}\cdot{\bf x}},\quad {\bf x}\in\bR^n,
$$
where ${\bf y}\cdot{\bf x}$ is the standard dot product of vectors in $\bR^n$. We consider $\La({\bf y})$ as a character of 
$G_A$ via the restriction. Note that if $G_A$ is endowed with the topology $\tau$ induced from the standard topology on 
$\bR^n$ and $(G_A,\tau)$ is dense in $\bR^n$, then $\bR^n\cong\widehat{\bR^n}\cong\widehat{(G_A,\tau)}$ with the isomorphism given by ${\bf y}\mapsto \La({\bf y})$, ${\bf y}\in\bR^n$. However, if $G_A$ is endowed with the discrete topology, then the structure of $\widehat{G_{A}}$ is more complicated and is given by a quotient of an ad\`ele ring of $K$ (see Theorem \ref{th:onto} below).

\sbr

In the next lemma, we give a description of $\widehat{G_{A}}$ for an arbitrary non-singular $A\in\M_n(\bZ)$. 
%with no additional assumptions on $A$. 
%$G_A$ considered as a subset of $\bQ^n$. 
For ${\bf y}\in\bQ_p^n$, we will denote by $\{ {\bf y}\}_p$ the ``fractional" part of ${\bf y}$, {\it i.e.}, ${\bf y}=\{ {\bf y}\}_p+{\bf y}_1$, where ${\bf y}_1\in\bZ_p^n$.

\begin{lem}\label{l:grchargen}
Let $A\in\M_n(\bZ)$ be non-singular  and let $\widehat{G_{A}}$ denote the Pontryagin dual of $G_A$, where $G_A$ is endowed with the discrete topology. % and assume that $G_A$ is dense in $\bR^n$. 
Consider the following map:
$$
\psi:\bR^n\times\widehat{{G}_{A}/\bZ^n}\rar \widehat{G_{A}},\quad
\psi({\bf\theta},\chi')=\La(-{\bf\theta})\chi',\quad {\bf\theta}\in\bR^n,
$$
where $\chi'\in \widehat{{G}_{A}/\bZ^n}$ is considered as a character of $G_A$ trivial on $\bZ^n$. 
If $G_A$ is dense in $\bR^n$ with respect to the standard topology on $\bR^n$, then the map $\bZ^n\rar \widehat{{G}_{A}/\bZ^n}$ 
given by ${\bf m}\mapsto \La({\bf m})$ is an embedding and $\psi$ induces a group isomorphism 
\bbe\label{eq:mine1} 
\left(\bR^n\times\widehat{{G}_{A}/\bZ^n}\right)\Big/ \bZ^n\cong \widehat{G_{A}},
\ee
where $\bZ^n$ is embedded into the product as ${\bf m}\mapsto({\bf m},\La({\bf m}))$. 
%and $G_A$ is endowed with the discrete topology. 
Moreover,
$$
\widehat{{G}_{A}/\bZ^n}\cong\prod_{p\,\vert \det A} \bZ_{p}^{t_p},
$$
and for every $\chi'\in\widehat{G_A/\bZ^n}$ there exist ${\bf v}_p\in\bZ_p^n$, $p\,\vert \det A$, such that
$$
\chi'({\bf x})=\prod_{p\,\vert \det A} e^{2\pi i\{{\bf x}\cdot{\bf v}_p\}_p}.
$$
\end{lem}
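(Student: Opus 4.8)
The plan is to read the isomorphism off the short exact sequence $0\to\bZ^n\to G_A\to G_A/\bZ^n\to 0$ by computing the kernel and image of $\psi$ by hand, and then to describe the torsion group $G_A/\bZ^n$ and its Pontryagin dual explicitly. First I would record that $\psi$ is a well-defined homomorphism: for $\theta\in\bR^n$ the restriction of $\La(-\theta)$ to $G_A\subseteq\bQ^n\subseteq\bR^n$ is a character of $G_A$, and $\chi'\in\widehat{G_A/\bZ^n}$ is viewed as a character of $G_A$ trivial on $\bZ^n$; it is continuous for the compact-open topology because $G_A$ is discrete and $\theta\mapsto e^{-2\pi i\theta\cdot{\bf x}}$ is continuous for each fixed ${\bf x}$. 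For the embedding claim, note that for ${\bf m}\in\bZ^n$ the character $\La({\bf m})$ is trivial on $\bZ^n$, hence descends to $\widehat{G_A/\bZ^n}$; if it is trivial on all of $G_A$, then ${\bf m}\cdot{\bf x}\in\bZ$ for every ${\bf x}\in G_A$, and since $\{{\bf y}\in\bR^n:{\bf m}\cdot{\bf y}\in\bZ\}$ is closed and, by the density hypothesis, contains the dense subset $G_A$, it is all of $\bR^n$, forcing ${\bf m}=0$. This is the only step that uses density.

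Next I would establish surjectivity and compute the kernel of $\psi$. Given $\chi\in\widehat{G_A}$, the character $\chi|_{\bZ^n}$ extends to a character of $\bR^n$, say $\La(-\theta)$ for some $\theta\in\bR^n$; then $\chi\cdot\La(\theta)|_{G_A}$ is trivial on $\bZ^n$, hence is the pull-back of some $\chi'\in\widehat{G_A/\bZ^n}$, and $\chi=\psi(\theta,\chi')$. For the kernel, $\La(-\theta)\chi'\equiv 1$ on $G_A$ forces $e^{-2\pi i\theta\cdot{\bf k}}=1$ for all ${\bf k}\in\bZ^n$ (where $\chi'$ is trivial), i.e. $\theta={\bf m}\in\bZ^n$, and then the condition becomes $\chi'=\La({\bf m})$ in $\widehat{G_A/\bZ^n}$. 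Thus $\Ker\psi=\{({\bf m},\La({\bf m})):{\bf m}\in\bZ^n\}$, which is exactly the embedded copy of $\bZ^n$ in the statement, so $\psi$ induces the asserted group isomorphism; if a topological isomorphism is wanted, it follows from the open mapping theorem since $\bR^n\times\widehat{G_A/\bZ^n}$ is $\sigma$-compact.

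For the ``moreover'' part I would first identify $G_A/\bZ^n$. Since $G_A\subseteq\caR^n$ with $\caR=\bZ[(\det A)^{-1}]$ and $\caR/\bZ\cong\bigoplus_{p\mid\det A}\bQ_p/\bZ_p$ (partial fractions), the group $G_A/\bZ^n$ is torsion and, under $\caR^n/\bZ^n\cong\bigoplus_{p}\bQ_p^n/\bZ_p^n$, its $p$-primary component is the image of $G_A$ in $\bQ_p^n/\bZ_p^n$, that is, $\overline{G}_{A,p}/\bZ_p^n$. By \eqref{eq:zp}, using the compatible generating systems of Lemma \ref{lem:gen-n} (which place $\bZ_p^n$ inside $\overline{G}_{A,p}\cong\bQ_p^{t_p}\oplus\bZ_p^{n-t_p}$ in standard position), one gets $\overline{G}_{A,p}/\bZ_p^n\cong(\bQ_p/\bZ_p)^{t_p}$, and a short $p$-adic approximation (replace ${\bf z}\in\bZ_p^n$ by a congruent ${\bf z}'\in\bZ^n$ modulo a sufficiently high power of $p$) shows $G_A/\bZ^n\to\overline{G}_{A,p}/\bZ_p^n$ is onto; hence $G_A/\bZ^n\cong\bigoplus_{p\mid\det A}(\bQ_p/\bZ_p)^{t_p}$. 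Dualizing, $\widehat{G_A/\bZ^n}\cong\prod_{p\mid\det A}\widehat{(\bQ_p/\bZ_p)^{t_p}}\cong\prod_{p\mid\det A}\bZ_p^{t_p}$, using the standard self-duality $\widehat{\bQ_p/\bZ_p}\cong\bZ_p$ with pairing $(v,\bar x)\mapsto e^{2\pi i\{vx\}_p}$. Finally, writing $\chi'=(\chi'_p)_p$ with $\chi'_p\in\widehat{\overline{G}_{A,p}/\bZ_p^n}$ a quotient of $\widehat{\bQ_p^n/\bZ_p^n}=\bZ_p^n$, lift $\chi'_p$ to ${\bf v}_p\in\bZ_p^n$; unwinding the primary decomposition of $\bar{\bf x}\in G_A/\bZ^n$ (whose $p$-component is the image of ${\bf x}$ in $\bQ_p^n/\bZ_p^n$) yields $\chi'({\bf x})=\prod_{p\mid\det A}e^{2\pi i\{{\bf x}\cdot{\bf v}_p\}_p}$, a finite product.

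The main obstacle is the identification $G_A/\bZ^n\cong\bigoplus_{p\mid\det A}(\bQ_p/\bZ_p)^{t_p}$: pinning this down as exactly the direct sum, rather than merely a subgroup of $\bigoplus_p(\bQ_p/\bZ_p)^n$, requires both the compatible bases of Lemma \ref{lem:gen-n} and the $p$-adic approximation establishing surjectivity of the reduction maps. Everything else is routine Pontryagin duality together with the standard structure of $\widehat{\bQ_p}$.
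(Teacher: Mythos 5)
Your proof is correct and follows essentially the same route as the paper: extend $\chi|_{\bZ^n}$ to $\La(-\theta)$ to get surjectivity, compute the kernel directly, use density only to embed $\bZ^n$ into $\widehat{G_A/\bZ^n}$, and then dualize the local decomposition of $G_A/\bZ^n$ via $\widehat{\bQ_p/\bZ_p}\cong\bZ_p$. The only difference is that where the paper quotes its earlier results (the isomorphism $\prod_{p\mid\det A}\overline{G}_{A,p}/\bZ_p^n\cong G_A/\bZ^n$ and $\overline{G}_{A,p}/\bZ_p^n\cong(\bQ_p/\bZ_p)^{t_p}$), you re-derive them by $p$-primary decomposition and a $p$-adic approximation argument, which is fine.
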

\begin{proof}
As in \cite{con}, for $\chi\in\widehat{G_{A}}$,  let 
$\chi({\bf e}_1)=e^{-2\pi i\theta_1},\ldots, \chi({\bf e}_n)=e^{-2\pi i\theta_n}$, where 
$\{{\bf e}_1,\ldots, {\bf e}_n\}$ is the standard basis of $\bR^n$.
Define $\chi'\in\widehat{G_{A}}$ via 
\bbe\label{eq:infloc}
%\chi_{\infty}({\bf x})=e^{-2\pi i{\bf x}\cdot{\bf \theta}},\quad 
{\bf\theta}=\left(\begin{matrix}\theta_1 &\ldots & \theta_n \end{matrix}\right)\in\bR^n,\quad 
\chi'=\La({\bf \theta})\chi.
\ee
Then $\chi'$ is trivial on $\bZ^n$, {\it i.e.}, $\chi'\in\widehat{G_A/\bZ^n}$, and hence 
$\psi$ is onto. We now find the kernel of $\psi$. Let $\chi=\La(-{\bf\theta})\chi'$ be trivial on $G_A$. Then,
$\La(-{\bf\theta})$ is trivial on $\bZ^n$, since $\chi'$ is trivial on $\bZ^n$ by assumption. Hence, ${\bf\theta}\in\bZ^n$ and $\chi'=\La({\bf\theta})$. Finally, if $\La({\bf m})$ is trivial on $G_A$ and 
$G_A$ is dense in $\bR^n$ in the standard topology, then $\La({\bf m})$ is trivial on $\bR^n$, since
$\La({\bf m})$ is a continuous character of $\bR^n$ in the standard topology. This implies that 
${\bf m}={\bf 0}$ and hence ${\bf m}\mapsto \La({\bf m})$ defines an embedding of $\bZ^n$ into 
$\widehat{{G}_{A}/\bZ^n}$. 

\sbr

We know that there is an isomorphism
%$$
%G_{A}/\bZ^n\cong\bigoplus_{p\,\vert \det A} \overline{G}_{A,p}/\bZ_{p}^n
%$$
%with the isomorphism %induced by the natural embedding $G_A\hrar\overline{G}_{A,p}$.
$$
\psi_A:\prod_{p\,\vert \det A} \overline{G}_{A,p}/\bZ_{p}^n\rarab{\sim} G_{A}/\bZ^n
$$
induced by $\psi_A({\bf v}_p)=\{{\bf v}_p\}_p$, ${\bf v}_p\in\overline{G}_{A,p}$ \cite[Lemma 3.2]{s2}.
Let $\chi'\in\widehat{G_{A}/\bZ^n}$.  Then 
\bbe\label{eq:chargr1}
\widehat{G_{A}/\bZ^n}\cong\prod_{p\,\vert \det A}\widehat{\overline{G}_{A,p}/\bZ_{p}^n},\quad
\chi'=\prod_{p\,\vert\det A}\chi_p',\quad\forall \chi_p'\in \widehat{\overline{G}_{A,p}/\bZ_{p}^n}.
\ee
We now fix a prime $p$ dividing $\det A$ and describe a 
character $\chi_p'$.
By \cite[Lemma 2.10]{s1}, $\overline{G}_{A,p}/\bZ_{p}^n\cong \left(\bQ_p/\bZ_p\right)^{t_p}$ and it is well-known that
$\widehat{\bQ_p/\bZ_p}\cong\bZ_p$. Thus,
$$
\widehat{\overline{G}_{A,p}/\bZ_{p}^n}\cong\widehat{\left(\bQ_p/\bZ_p\right)^{t_p}}\cong \bZ_p^{t_p}.
$$
Tracing the maps, one can show that each character $\chi_p'$ of $\overline{G}_{A,p}$ trivial on $\bZ_p^n$ is determined by ${\bf v}_p\in\bZ_p^n$ via
$$
\chi_p'({\bf x})=e^{2\pi i\{{\bf x}\cdot {\bf v}_p\}_p},\quad {\bf x}\in \overline{G}_{A,p}.
$$
\end{proof}

%By Lemma \ref{l:grchargen}, to describe the character group of $G_A$, it is enough to describe the character group of $G_A/\bZ^n$. We now describe $\widehat{G_A/\bZ^n}$ under the assumption that the characteristic polynomial of $A$ is irreducible. 
%
%\sbr

\subsection{$G_A$ as a subgroup of a number field}\label{ss:nf} So far, we have considered $G_A$ as a subset of $\bQ^n$. We now show that when the characteristic polynomial of $A$ is irreducible, one can consider $G_A$ as a subset of a number field $\bQ(\la)$, where $\la\in\overline{\bQ}$ is an eigenvalue of $A$.

\sbr

For the rest of the section we will fix an eigenvalue $\la\in\overline{\bQ}$ of $A$ and a corresponding eigenvector ${\bf u}\in(\overline{\bQ})^n$. Let $K=\bQ(\la)$ and, without loss of generality, we can assume that 
${\bf u}=\left(\begin{matrix} u_1 & \ldots & u_n \end{matrix}\right)\in(\cO_K)^n$, 
%${\bf u}=\left(\begin{matrix} u_1 & \ldots & u_n \end{matrix}\right)\in\bZ[\la]^n$, $\bZ[\la]\subseteq \cO_K$
where $\cO_K$ denotes the ring of integers of $K$.
Let $\cS_{\la}$ consist of (all) prime ideals of $\cO_K$ dividing $\la$. (Note that $\la\in\cO_K$.) 
Recall that $\cO_{K,{\la}}$ denotes the ring of $\cS_{\la}$-integers, {\it i.e.},
$$
%\bbe\label{eq:Sint}
\cO_{K,{\la}}=\{x\in K\,\vert\, \val_{\fp}(x)\geq 0\text{ for any prime ideal }\fp\text{ of }\cO_K\text{ not in }\cS_{\la}\}
=\cO_{K}\left[{\la}^{-1}\right].
$$
%\ee 
Assume $h_A\in\bZ[t]$ is irreducible. Then $\Gal(\overline{\bQ}/\bQ)$ acts transitively on all the eigenvalues 
$\la_1=\la,\ldots,\la_n$ of $A$, {\it i.e.}, $\la_i=\sg_i(\la)$
for embeddings $\sg_1=\id,\sg_2,\ldots,\sg_n\in\Gal(\overline{\bQ}/\bQ)$ of $K$ into $\overline{\bQ}$. 
%Let
%${\bf u}$ be an eigenvector of $A$ corresponding to $\la=\la_1$. Without loss of generality, we can assume that
%${\bf u}\in\cO_K^n$.
Then ${\bf u}_i=\sg_i({\bf u})$ is an eigenvector of $A$ corresponding to $\la_i$, $i\in\{1,\ldots,n\}$. For ${\bf x}\in G_A$, 
since ${\bf u}_1={\bf u},\ldots,{\bf u}_n$ are linearly independent over $\overline{\bQ}$, ${\bf x}=\sum_{i=1}^n x_i{\bf u}_i$ for some $x_1,\ldots,x_n\in \overline{\bQ}$.
%let ${\bf u}={\bf u}_1,\ldots,{\bf u}_n$ be eigenvectors of $A$ corresponding to all the (distinct) eigenvalues $\la=\la_1,\ldots,\la_n\in \cO_K$ of $A$. Then $\phi$ is defined as follows: for ${\bf x}\in G_A$, let ${\bf x}=\sum_{i=1}^n x_i{\bf u}_i$, $x_1,\ldots,x_n\in K$, 
%${\bf u}_{\infty}\in\bR^n$, $\xi_{\fp}\in I_{\fp}(A,\la)$, $\fp\in\cS$. Since $h_A$ is irreducible, ${\bf u}_1,\ldots,{\bf u}_n$ are linearly independent over $K$, $\Gal(\overline{\bQ}/\bQ)$ acts transitively on $\la_1,\ldots,\la_n$, {\it i.e.}, $\la_i=\sg_i(\la)$
%for embeddings $\sg_1=\id,\sg_2,\ldots,\sg_n\in\Gal(\overline{\bQ}/\bQ)$ of $K$ into $\bC$. Without loss of generality, we can assume that ${\bf u}_i=\sg_i({\bf u})$ ??? conflict with a similar assumption for $K_{\fp}$ or not. I think not, since we only fix one ${\bf u}={\bf u}_1$ and produce the rest from it; for $K_{\fp}$ we can have different eigenvectors 
%${\bf u}_i'=$ multiples of ${\bf u}_i$ which does not change the trace $K_{\fp}/\bQ_p$ ??? 
Since ${\bf x}\in\bQ^n$, $\sg_i({\bf x})={\bf x}$ and hence $x_i=\sg_i(x_1)$  for all $i$. Note that $x_1\in K$.
Indeed, since 
${\bf u}_1={\bf u}\in K^n$, for any $\sg\in\Gal(\overline{\bQ}/K)$, we have that 
$\sg({\bf u}_1)={\bf u}_1$, $\sg({\bf x})={\bf x}$, and hence $\sg(x_1)=x_1$ and $x_1\in K$. Thus, the projection
$\mu$ along ${\bf u}$ defines an injective homomorphism
\bbe\label{eq:mu}
\mu:G_A\hrar K,\quad \mu({\bf x})=x_1,\quad {\bf x}=\sum_{i=1}^n\sg_i(x_1{\bf u}),\quad\sg_1=\id.
\ee
To prove our main result in this section, Theorem \ref{th:onto} below, we will need the following lemma. 
\begin{lem}\label{l:mu1}
Assume $A\in\M_n(\bZ)$ is non-singular with an irreducible characteristic polynomial $h_A\in\bZ[x]$. 
Then, $\cO_{K,{\la}}\subseteq \mu(G_A)$, where $\mu$ is given by $\eqref{eq:mu}$.
%The quotient $\mu(G_A)/\cO_{K,\cS}$ is torsion.
%If $n=2$, then $\cO_{K,\cS}=\mu(G_A)$ and hence $G_A\cong \cO_{K,\cS}$. 
\end{lem}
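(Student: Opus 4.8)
The plan is to show the reverse inclusion $\cO_{K,\la}\subseteq\mu(G_A)$ by producing, for each $x\in\cO_{K,\la}$, an explicit vector ${\bf x}\in G_A$ with $\mu({\bf x})=x$. The key observation is that the map $\mu$ sends ${\bf x}=\sum_{i=1}^n\sg_i(y{\bf u})$ to $y$, so I need to verify that for suitable $y\in\cO_{K,\la}$ the symmetric combination $\sum_{i=1}^n\sg_i(y{\bf u})$ actually lies in $G_A$. First I would recall that $M=\left(\begin{matrix}\sg_1({\bf u})&\ldots&\sg_n({\bf u})\end{matrix}\right)\in\M_n(\cO_F)$ (where $F$ is a normal closure containing all eigenvalues), and $m=(\det M)^2\in\bZ$, so that $M^{-1}\in\frac{1}{\det M}\M_n(\cO_F)$ and every entry of $M^{-1}$, being a ratio of Galois-conjugate-related minors, lies in $\frac{1}{m}\cO_F$ after clearing. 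For $y\in\cO_K$ the vector $\sum_i\sg_i(y{\bf u})=M\,(y,\sg_2(y),\ldots,\sg_n(y))^t$ is Galois-stable hence in $\bQ^n$, and multiplying by $m$ (or by a fixed integer cofactor) lands it in $\bZ^n$; so $\frac{1}{m}\bZ^n$-type denominators are the only issue.

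\medskip

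Next I would remove the denominator $m$ using the $A$-action. Since $h_A$ is irreducible, $\det A=\pm N_{K/\bQ}(\la)$, and the primes dividing $m=\operatorname{disc}$-type quantity need to be controlled. Here is the cleaner route: it suffices to prove $\frac{1}{p^k}\mu(\bZ^n)\cap(\text{image})$ statements prime by prime, but more directly, I would argue that $\bZ[\la]\cdot\mu^{-1}$ behaves well. Concretely: for $y\in\bZ[\la]$, write $y=g(\la)$ with $g\in\bZ[t]$; then the vector $g(A){\bf u}$-type manipulation shows $\sum_i\sg_i(g(\la){\bf u})=g(A)\bigl(\sum_i\sg_i({\bf u})\bigr)$, and $\sum_i\sg_i({\bf u})=M(1,1,\ldots,1)^t\in\bZ^n$. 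Hence for every $y\in\bZ[\la]$ there is ${\bf x}_y\in\bZ^n\subseteq G_A$ with $\mu({\bf x}_y)=y$, i.e. $\bZ[\la]\subseteq\mu(G_A)$. Now for general $x\in\cO_{K,\la}=\cO_K[\la^{-1}]$, I first reduce to $x\in\cO_K$ by noting $\mu(A^k{\bf x})=\la^k\mu({\bf x})$ (since $A{\bf u}=\la{\bf u}$ implies $\mu(A{\bf x})=\la\,\mu({\bf x})$), so if $\la^N x\in\cO_K$ has a preimage then $x$ does too. Thus it remains to handle $x\in\cO_K$.

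\medskip

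The main obstacle is the step $\cO_K\subseteq\mu(G_A)$ when $\cO_K\supsetneq\bZ[\la]$: the index $s=[\cO_K:\bZ[\la]]$ divides some power of... no, in general $s$ need not be supported on $\det A$. But this is exactly where $G_A$ being genuinely larger than the lattice spanned by $\bZ[\la]\cdot{\bf u}$-conjugates comes in. For $x\in\cO_K$, the symmetric vector ${\bf x}=\sum_i\sg_i(x{\bf u})$ lies in $\bQ^n$; I claim ${\bf x}\in G_A$ by checking the criterion via localization (Lemma \ref{l:simple}): ${\bf x}\in\caR^n$ needs $\det A$ to absorb denominators, and ${\bf x}\in\overline{G}_{A,p}$ for $p\in\cP'$. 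For such $p$, decompose over a prime $\fp\mid p$ of $\cO_F$: since $x\in\cO_K$, each $\sg_i(x)\in\cO_\fp$, so $M\,(\sg_1(x),\ldots,\sg_n(x))^t$ has $\fp$-integral coordinates in the $\cX_{A,\fp}$-complement and arbitrary poles only along eigenvalue-directions divisible by $\fp$ — which is precisely the divisible part $\bQ_p^{t_p}$ of $\overline{G}_{A,p}$. I would make this rigorous by writing ${\bf x}=\sum_{\sg_i(\la)\,\equiv\,0\,(\fp)}\sg_i(x{\bf u})+\sum_{\text{else}}\sg_i(x{\bf u})$: the second sum is $\fp$-integral hence in $\bZ_p^n$, the first lies in $\cX_{A,\fp}\otimes K_\fp$ which is inside the divisible part. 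For the finitely many primes dividing the "extra" denominator $m$ but not $\det A$ (i.e. $p\notin\cP$), one checks ${\bf x}$ is automatically $p$-integral because $M\in\GL_n(\bZ_p)$ there — here I use that $p\mid m$ with $p\nmid\det A$ cannot happen when... actually $m$ and $\det A$ may share no link, so instead I multiply $x$ by a fixed rational integer $c$ coprime to $\det A$ making $c\,{\bf x}\in\bZ^n$, giving $c\,x\in\mu(G_A)$, and then recover $x$ itself since $c\in\caR^\times$... no, $c$ need not be a unit in $\caR$. The correct fix is: $\bZ[\la]\subseteq\mu(G_A)$ already handles all of $\cO_K$ up to index $s$ prime to $\det A$, and then the localization argument at primes in $\cP'$ upgrades $\bZ[\la]$ to all of $\cO_K$ locally, which combined via \eqref{eq:fuks} gives $\cO_K\subseteq\mu(G_A)$. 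Assembling these — $\bZ[\la]\subseteq\mu(G_A)$, the $\la$-scaling reduction, and the prime-by-prime check through Lemma \ref{l:simple} — completes the proof, with the $\fp$-adic bookkeeping of the divisible part being the delicate point.
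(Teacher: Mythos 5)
Your first two steps are sound: $\bZ[\la]\subseteq\mu(G_A)$ via $g(A)\sum_i\sg_i({\bf u})\in\bZ^n$, and the reduction of $\cO_{K,\la}=\cO_K[\la^{-1}]$ to $\cO_K$ using $\mu(A^{-N}{\bf y})=\la^{-N}\mu({\bf y})$ together with the $A^{-1}$-stability of $G_A$. The genuine gap is in the remaining step $\cO_K\subseteq\mu(G_A)$, and it stems from a misconception: you treat the vector ${\bf x}=\sum_i\sg_i(x{\bf u})$, $x\in\cO_K$, as though it had denominators supported on $m=(\det M)^2$, and none of your proposed repairs closes that (imagined) hole. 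In fact no inversion of $M$ occurs in passing from $x$ to ${\bf x}$: the $j$-th entry of ${\bf x}$ is $\sum_i\sg_i(xu_j)=\tr_{K/\bQ}(xu_j)$, a rational number that is also an algebraic integer (since $x,u_j\in\cO_K$), hence lies in $\bZ$. So ${\bf x}\in\bZ^n\subseteq G_A$ outright, by exactly the same trace observation you implicitly used to place $\sum_i\sg_i({\bf u})=M(1,\ldots,1)^t$ in $\bZ^n$, and the whole local analysis for $x\in\cO_K$ is unnecessary. As submitted, however, your argument never establishes even ${\bf x}\in\caR^n$: the multiplication-by-$c$ idea is (rightly) abandoned since $c\notin\caR^\times$, and the closing ``fix'' --- that $\bZ[\la]\subseteq\mu(G_A)$ ``handles $\cO_K$ up to index prime to $\det A$'' and that localization at $p\in\cP'$ ``upgrades'' this --- is not an argument: $[\cO_K:\bZ[\la]]$ need not be prime to $\det A$, and to invoke \eqref{eq:fuks} you must verify both $\caR^n$-membership and membership in $\overline{G}_{A,p}$ for every $p\in\cP$, the first of which is precisely what was left hanging. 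Your sketched $\fp$-adic splitting also omits why the two partial sums are $\bQ_p$-rational; the paper secures this via the Hensel factorization $h_A=h_1h_2$ over $\bZ_p$.

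For comparison, the paper does not reduce to $\cO_K$: it takes $x_1\in\cO_{K,\la}$ directly, notes ${\bf x}\in\caR^n$ because the denominators of $x_1$ involve only primes dividing $\la$ (hence lying above divisors of $\det A$), and then for each $p\in\cP$ splits ${\bf x}$ according to which conjugates of $\la$ are divisible by a chosen prime $\fq$ above $p$, using Hensel's lemma for Galois-stability, the integrality $\sg_j(x_1)\in\cO_{\fq}$ for $j>t_p$, and the identification of the divisible part $D_p(A)$ with the span of the corresponding eigenvectors. Once you insert the trace observation, your route (reduce to $\cO_K$ by a power of $A$, then note the symmetric vector is already integral) closes and is in fact shorter than the paper's; but as written, the key inclusion $\cO_K\subseteq\mu(G_A)$ is not proved.
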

\begin{proof}
Let $x_1\in\cO_{K,{\la}}$ and let ${\bf x}=\sum_i\sg_i(x_1{\bf u}_1)$. We need to show that ${\bf x}\in G_A$. By construction, ${\bf x}\in\bQ^n$. Moreover, ${\bf x}\in\caR^n$, since for any 
$\sg\in\Gal(\overline{\bQ}/\bQ)$ and any $\fp\in\cS_{\la}$, 
$\sg(\fp)$ is a prime ideal of $\bQ(\sg(\la))$ dividing $\sg(\la)$. Thus, $\val_{\fq}\sg_i(x_1{\bf u}_1)\geq 0$ for any prime ideal $\fq$ of the splitting field $L$ of $h_A$ not dividing $\det A$.  
In other words, in the ``denominators" of $\sg_i(x_1{\bf u}_1)$'s there are only prime ideals dividing $\la_i$'s and hence in the denominators of ${\bf x}\in\bQ^n$ we only have primes 
$p\in\bN$ dividing $\det A$.  Thus, by \eqref{eq:fuks}, we only need to show that ${\bf x}\in\overline{G}_{A,p}$ for any $p\in\cP$ under the embedding induced by $\bQ\hrar{\bQ}_{p}$. 

\sbr

We fix an arbitrary $p\in\cP$. Even though $h_A$ is irreducible over $\bQ$, it might not be irreducible over 
$\bQ_p$. By the definition of $t_p$, we have $h_A(x)\equiv f(x)x^{t_p}\,(\text{mod }p)$, where $f\in\bF_p[x]$, $f(0)\ne 0$. 
Therefore, by Hensel's lemma,  $h_A=h_1h_2$, where $h_1,h_2\in\bZ_p[t]$, $h_1\equiv f\,(\text{mod }p)$, $h_2\equiv x^{t_p}\,(\text{mod }p)$, and $p$ does not divide $h_1(0)$. 
 Let $L$ be a finite Galois extension of $\bQ$ containing all eigenvalues of $A$, {\it e.g.}, $L$ is the splitting field of $h_A$. Let $\fq$ be a prime ideal of $L$ lying above $p$. Without loss of generality, we can assume that $\fq$ divides $\la_1=\la,\ldots,\la_{t_p}$ in $\cO_L$, so that $\la_1,\ldots,\la_{t_p}$ are all roots of $h_2$ in $\overline{\bQ}_p$. Then, for any prime ideal $\fp$ of $\cO_K$ dividing $\la$, 
 $\sg_i(\fp)$ is not divisible by $\fq$ for any $t_p<i\leq n$. Indeed, clearly $\fq$ does not divide any $\sg_i(\fp)$ if 
$\fp$ lies above a prime $p'\in\bN$ not equal to $p$. If $\fp$ lies above $p$ and 
$\sg_j(\fp)$ is divisible by $\fq$ for some $j>t_p$, then $\fq$ divides $\sg_j(\la)$ and we have a contradiction with $\deg h_2=t_p$. This implies that $x_j=\sg_j(x_1)\in\cO_{\fq}$ for any $j>t_p$, since $x_1\in\cO_{K,{\la}}$ by assumption. 
Let ${\bf x}={\bf y}_1+{\bf y}_2$, where ${\bf y}_1=\sum_{i=1}^{t_p}\sg_i(x_1{\bf u}_1)$,
${\bf y}_2=\sum_{i=t_p+1}^{n}\sg_i(x_1{\bf u}_1)$. Let $G_p=\Gal(\overline{\bQ}_p/\bQ_p)$. Since any 
$\sg\in G_p$ permutes roots of $h_1$ (respectively, $h_2$), we have that $\sg({\bf y}_i)={\bf y}_i$, $i=1,2$.
Hence, ${\bf y}_2\in(\cO_{\fq})^n\cap\bQ_p^n$, since ${\bf u}_1\in\cO_K\subseteq\cO_L$. Thus, 
${\bf y}_2\in\bZ_p^n\subseteq \overline{G}_{A,p}$. Then, ${\bf y}_1\in\Span_{L_{\fq}}({\bf u}_1,\ldots, {\bf u}_{t_p})\cap\bQ_p^n$, where $\Span_{L_{\fq}}({\bf u}_1,\ldots, {\bf u}_{t_p})\cap\bQ_p^n=D_p(A)\subseteq \overline{G}_{A,p}$ by \cite[Lemma 4.1]{s2}. This shows that ${\bf x}\in\overline{G}_{A,p}$ for any $p\in\cP$.
\end{proof}

We now describe the image of $\mu$ inside $K$. Let $M=\left(\begin{matrix} \sg_1({\bf u}) & \ldots & \sg_n({\bf u})\end{matrix}\right)\in\M_n(\overline{\bQ})$ be as in \eqref{eq:lm}, where $\sg_1,\ldots,\sg_n$ are all embeddings of $K$ into $\overline{\bQ}$ and $\sg_1=\id$.
\begin{lem}\label{l:image}
Assume $A\in\M_n(\bZ)$ is non-singular with an irreducible characteristic polynomial $h_A\in\bZ[x]$. 
Let ${\bf w}=\left(\begin{matrix} w_1 & \ldots & w_n \end{matrix}\right)^t\in(\overline{\bQ})^n$ be the $1$st column of $(M^t)^{-1}$, that is ${\bf w}$ is an eigenvector of $A^t$ corresponding to $\la$. 
Denote 
$$
Y_{A^t}({\bf w},\la)=\{m_1\la^{k_1}w_1+\cdots +m_n\la^{k_n}w_n\,\vert\,m_1,\ldots,m_n,k_1,\ldots,k_n\in\bZ \}.
$$
Then $Y_{A^t}({\bf w},\la)$ is a $\bZ[\la^{\pm 1}]$-submodule of $K$,  
$\mu(G_A)=Y_{A^t}({\bf w},\la)$, and $G_A\cong Y_{A^t}({\bf w},\la)$.
\end{lem}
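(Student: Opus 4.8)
The plan is to compute $\mu(G_A)$ directly by tracing through the change-of-basis given by $M$. Recall that $G_A = \{A^k \mathbf{x} \mid \mathbf{x}\in\bZ^n,\ k\in\bZ\}$, and that $\mu(\mathbf{y}) = x_1$ where $\mathbf{y} = \sum_i \sg_i(x_1)\sg_i(\mathbf{u}) = M\bigl(\sg_1(x_1),\dots,\sg_n(x_1)\bigr)^t$; equivalently, $\bigl(\sg_1(\mu(\mathbf{y})),\dots,\sg_n(\mu(\mathbf{y}))\bigr)^t = M^{-1}\mathbf{y}$, so $\mu(\mathbf{y})$ is the first coordinate of $M^{-1}\mathbf{y}$. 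First I would observe that the first row of $M^{-1}$ is exactly $\mathbf{w}^t = (w_1,\dots,w_n)$ by definition of $\mathbf{w}$ as the first column of $(M^t)^{-1}$, so $\mu(\mathbf{y}) = \mathbf{w}^t\mathbf{y} = \sum_j w_j y_j$ for $\mathbf{y} = (y_1,\dots,y_n)^t$. Then, since $A = M\La M^{-1}$ with $\La = \diag(\la,\sg_2(\la),\dots,\sg_n(\la))$, we get $A^k = M\La^k M^{-1}$, and a direct computation gives $\mathbf{w}^t A^k = \la^k \mathbf{w}^t$ (because $\mathbf{w}^t$ is a left eigenvector of $A$ with eigenvalue $\la$, equivalently $\mathbf{w}$ is an eigenvector of $A^t$ for $\la$, which is the stated description of $\mathbf{w}$).

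Next I would evaluate $\mu$ on a general element $A^k\mathbf{x}$ of $G_A$ with $\mathbf{x} = (m_1,\dots,m_n)^t \in\bZ^n$: $\mu(A^k\mathbf{x}) = \mathbf{w}^t A^k \mathbf{x} = \la^k \mathbf{w}^t\mathbf{x} = \la^k\sum_j m_j w_j = \sum_j m_j \la^k w_j$. This already shows $\mu(G_A) \subseteq Y_{A^t}(\mathbf{w},\la)$, but with all the $k_j$ equal to a common $k$. For the reverse inclusion, I need to produce elements with arbitrary independent exponents $k_1,\dots,k_n$. The key point is that $\mu(G_A)$ is closed under multiplication by $\la$ and $\la^{-1}$: indeed $\mu(A^{\pm1}\mathbf{y}) = \la^{\pm1}\mu(\mathbf{y})$ for $\mathbf{y}\in G_A$, and $A^{\pm1}\mathbf{y}\in G_A$, so $\mu(G_A)$ is a $\bZ[\la^{\pm1}]$-submodule of $K$. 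Since each standard basis vector $\mathbf{e}_j\in\bZ^n\subseteq G_A$ has $\mu(\mathbf{e}_j) = w_j$, the module $\mu(G_A)$ contains every $w_j$ and hence contains $\la^{k_j} w_j$ for all $k_j\in\bZ$, and therefore contains every $\bZ$-linear combination $\sum_j m_j\la^{k_j}w_j$. This gives $Y_{A^t}(\mathbf{w},\la)\subseteq \mu(G_A)$, completing the equality; the fact that $Y_{A^t}(\mathbf{w},\la)$ is a $\bZ[\la^{\pm1}]$-module is immediate from its definition, and $G_A\cong Y_{A^t}(\mathbf{w},\la)$ follows since $\mu$ is an injective homomorphism by \eqref{eq:mu}.

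I do not expect a serious obstacle here; the statement is essentially a bookkeeping identity once one identifies the first row of $M^{-1}$ with $\mathbf{w}^t$ and uses that $\mathbf{w}$ is a left eigenvector. The one point requiring a little care is the direction $Y_{A^t}(\mathbf{w},\la)\subseteq\mu(G_A)$: a priori $\mu(G_A)$ only obviously contains combinations $\la^k\sum_j m_j w_j$ with a \emph{single} power of $\la$, and one must exploit the $\bZ[\la^{\pm1}]$-module structure (equivalently, that $A$ and $A^{-1}$ act on $G_A$) to decouple the exponents. I would state this as a short lemma-internal remark: since $w_j = \mu(\mathbf{e}_j)\in\mu(G_A)$ and $\la^{\pm1}\mu(G_A) = \mu(A^{\pm1}G_A)\subseteq\mu(G_A)$, each monomial $m_j\la^{k_j}w_j$ lies in $\mu(G_A)$, and $\mu(G_A)$ is an additive group. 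Finally I would double-check the harmless subtlety that $Y_{A^t}(\mathbf{w},\la)$ genuinely lies inside $K$ (not just $\overline{\bQ}$): this holds because $w_j\in K$ — indeed $M\in\M_n(\cO_F)$ has rows permuted by $\Gal(\overline{\bQ}/\bQ)$ in the same way the $\sg_i$ are permuted, so the first row $\mathbf{w}^t$ of $M^{-1}$ is fixed by $\Gal(\overline{\bQ}/K)$, hence $\mathbf{w}\in K^n$ — which also matches the assertion that $\mathbf{w}$ is an eigenvector of $A^t$ for the eigenvalue $\la\in K$.
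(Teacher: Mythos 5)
Your proof is correct and takes essentially the same route as the paper: you read $\mu$ off as the first row ${\bf w}^t$ of $M^{-1}$ applied to ${\bf x}$, use $A=M\La M^{-1}$ to get $\mu(A^k{\bf x})=\la^k\,{\bf w}^t{\bf x}$, and handle the decoupling of the exponents $k_1,\ldots,k_n$ by a module argument (the paper does this via $\bZ[{\bf w}]$ being a $\bZ[\la]$-module, so every element of $Y_{A^t}({\bf w},\la)$ is $\la^k u$ with $u\in\bZ[{\bf w}]$; you do it via $\mu(G_A)$ being a $\bZ[\la^{\pm1}]$-module containing each $w_j=\mu({\bf e}_j)$ --- the same observation in dual form). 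The only cosmetic slip is that Galois permutes the \emph{columns} of $M$ (hence the rows of $M^{-1}$), not the rows of $M$; your conclusion that ${\bf w}^t$ is fixed by $\Gal(\overline{\bQ}/K)$, so ${\bf w}\in K^n$, still stands (the paper instead rescales a $K$-rational eigenvector of $A^t$, and in fact $w_j=\mu({\bf e}_j)\in K$ already follows from $\mu$ mapping into $K$).
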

\begin{proof} By definition, ${\bf w}\cdot{\bf u}=1$, where we assume ${\bf u}\in K^n$. Since 
$A^t{\bf w}=\la{\bf w}$ and $A^t$ has integer entries, there is an eigenvector ${\bf w}'\in K^n$ of $A^t$ corresponding to $\la$. Thus, ${\bf w}=\al{\bf w}'$ for some $\al\in\overline{\bQ}$. Therefore, 
$$
{\bf w}\cdot{\bf u}=\al{\bf w}'\cdot{\bf u}=1.
$$
Since ${\bf w}'\cdot{\bf u}\in K$, 
this implies that $\al\in K$, ${\bf w}\in K^n$, and $Y_{A^t}({\bf w},\la)\subseteq K$. Clearly, 
$Y_{A^t}({\bf w},\la)$ is a $\bZ[\la^{\pm 1}]$-submodule of $K$. Moreover, 
$$
{\bZ}[{\bf w}]=\{m_1w_1+\cdots +m_nw_n\,\vert\,m_1,\ldots,m_n\in\bZ \}
$$ 
is a $\bZ[\la]$-module, since $A^t$ has integer entries. Then, any $y\in Y_{A^t}({\bf w},\la)$ has the form
$y=u\la^k$ for some $u\in {\bZ}[{\bf w}]$ and $k\in\bZ$.

\sbr

Let ${\bf x}\in G_A$, 
$\mu({\bf x})=x_1$. Then, by the definition of $\mu$, 
${\bf x}=\sum_i\sg_i(x_1{\bf u})$. It can be easily verified that $(M^t)^{-1}=\left(\begin{matrix} \sg_1({\bf w}) & \ldots & \sg_n({\bf w})\end{matrix}\right)$.  By the definition of $G_A$ and \eqref{eq:lm}, ${\bf x}\in G_A$ if and only if there exists $k\in\bZ$ and ${\bf m}\in\bZ^n$ such that 
$$
{\bf x}=A^k{\bf m}=M\La^k M^{-1}{\bf m}=M\left(\begin{matrix} x_1 & \sg_2(x_1) & \ldots & \sg_n(x_1)\end{matrix}\right)^t 
$$
 if and only if $x_1\in Y_{A^t}({\bf w},\la)$. 
\end{proof}

Denote 
\bbe\label{eq:ya}
Y_{A}({\bf u},\la)=\{m_1\la^{k_1}u_1+\cdots +m_n\la^{k_n}u_n\,\vert\,m_1,\ldots,m_n,k_1,\ldots,k_n\in\bZ \},
\ee
an analogue of $Y_{A^t}({\bf w},\la)$ for $A$ and ${\bf u}$. Then $Y_{A}({\bf u},\la)$ is a $\bZ[\la^{\pm 1}]$-submodule of $\cO_{K,{\la}}$, since $u_1,\ldots,u_n\in\cO_K$ by assumption. Let $m=(\det M)^2\in\bZ$ be the discriminant of the lattice
${\bZ}[{\bf u}]$.
%Also, 
%$\det M\in\overline{\bQ}$ and $\det M$ is integral. Also, for any $\sg\in\Gal(\overline{\bQ}/\bQ)$, $\sg$ permutes the embeddings $\{\sg_1,\ldots,\sg_n\}$ of $K$ into 
%$\overline{\bQ}$. Hence, $\sg(\det M)=\pm\det M$ and $\sg(\det M)^2=(\det M)^2$. Thus, $(\det M)^2\in\bQ$ and since it is integral, $m=(\det M)^2\in\bZ$. 
Then we have the diagram
\bbe\label{eq:diagram}
\xymatrix{
Y_{A}({\bf u},\la)\ar[d]^{\cong} & \subseteq\cO_{K,{\la}}\subseteq & Y_{A^t}({\bf w},\la)\ar[d]^{\cong} & \rarab{\cdot m} 
Y_{A^t}(m{\bf w},\la) \subseteq\cO_{K,{\la}} \\
G_{A^t} & & G_A &
},
\ee
where the ``down" isomorphisms are given by $\mu$ applied to $G_{A^t}$, $G_A$.
%, and the multiplication by $(\det M)^2\in \bZ$ also gives an ($\bZ[\la^{\pm 1}]$-module) isomorphism.

\subsection{Character group of $G_A$ via ad\`eles} We will use the notation introduced in \cite{t}. 
Let $\cS_{\infty}$ denote the set of all infinite places of $K$. Denote
\begin{align*} 
&\text{complex }\fp\in\cS_{\infty} &  &\La_{\fp}(\xi)=-2\Re(\xi) & &\xi\in K_{\fp}=\bC \\
&\text{real }\fp\in\cS_{\infty} &  &\La_{\fp}(\xi)=-\xi & &\xi\in K_{\fp}=\bR \\
&\text{finite }\fp &  &\La_{\fp}(\xi)=\{\tr_{K_{\fp}/\bQ_p}\xi\}_p & &\xi\in K_{\fp}.
\end{align*}
Recall that $\cS_{\la}$ denotes the set of all finite places (prime ideals) of $K$ dividing $\la$ in $\cO_K$. Note that $\cS_{\infty}\cup\cS_{\la}$ is a finite set. Denote
$$
\bA_{K,\la}=\prod_{\fp\in \cS_{\infty}\cup\cS_{\la}} K_{\fp}.
$$
It is an object of the same nature as the ad\`ele ring $\bA_K$ of $K$, consisting of all elements 
$(\ldots,\eta_{\fp},\ldots)$, where $\fp$ runs through all the places of $K$, $\eta_{\fp}\in K_{\fp}$ for any $\fp$ and 
$\eta_{\fp}\in \cO_{\fp}$ for all but finitely many $\fp$. It is known that the character group of $K$ endowed with the discrete topology is isomorphic to the quotient of $\bA_K$ by $K$, where $K$ is embedded into $\bA_K$ diagonally via $\xi\mapsto (\ldots,\xi_{\fp},\ldots)$, each $\xi_{\fp}=\xi$ \cite{t}. 
%It is also known that the character group of $\cO_{K,{\la}}$ is isomorphic to the quotient of $\bA_{K,\la}$ by $\cO_{K,\la}$  \cite{cew}. 
From Section \ref{ss:nf}, if the characteristic polynomial of $A$ is irreducible, then $G_A$ can be 
considered as a subset of $K$ via $\mu$ in \eqref{eq:mu}. 
%Moreover, from \eqref{eq:diagram}, 
%$G_A\cong Y_{A^t}(m{\bf w},\la)$, a subset of $\cO_{K,\la}$. Thus, we have a (restriction) map
%$\bA_{K,\la}\to \widehat{G_A}$. Unfortunately, difficulties arise with this approach, when $\det M$ is divisible by primes that do not divide $\det A$. We are still able to show that there is a surjection 
%$\bA_{K,\la}\to \widehat{G_A}$, % and find its kernel, 
%but we do it in a round-about way to deal with the ``bad" primes. 
Each $\eta=(\ldots,\eta_{\fp},\ldots)\in \bA_{K,\la}$ defines a character $\chi$ of $K$ via
\bbe\label{eq:chi11}
\chi(\eta)(\xi)=\prod_{\fp\in \cS_{\infty}\cup\cS_{\la}} e^{2\pi i\La_{\fp}(\eta_{\fp}\xi)},\quad\xi\in K,
\ee
hence $\eta$ defines a character of $G_A$ via restriction and isomorphism $\mu$ in \eqref{eq:mu}:
\bbe\label{eq:chi22}
\chi(\eta)({\bf x})=\chi(\eta)(x_1),\quad x_1=\mu({\bf x})\in K,\quad {\bf x}\in G_A.
\ee 
We have a diagonal embedding of $K$ into $\bA_{K,\la}$ via $\xi\mapsto(\xi,\ldots,\xi)$. We will denote by $\xi$ a general element of $K$, by ${\bf x}$ a general element of $G_A$, and by $x_1$ a general element of $\mu(G_A)$. In what follows, we will show that $\widehat{G_A}$ is isomorphic to a quotient of $\bA_{K,\la}$ by $Y_{A}({\bf u},\la)$ defined by \eqref{eq:ya}. 
\begin{thm}\label{th:onto}
Let $A\in\M_n(\bZ)$ be non-singular with an irreducible characteristic polynomial $h_A\in\bZ[x]$.
Let $\la\in\overline{\bQ}$ be an eigenvalue of $A$ and let $K=\bQ(\la)$. 
The map $\phi:\bA_{K,\la}\to \widehat{G_A}$ given by $(\ldots,\eta_{\fp},\ldots)\mapsto \chi$,
\bbe\label{eq:charmain}
\chi({\bf x})=\prod_{\fp\in \cS_{\infty}\cup\cS_{\la}} e^{2\pi i\La_{\fp}(\eta_{\fp}\mu({\bf x}))},\quad{\bf x}\in G_A,
\ee
%$\eqref{eq:chi11}$ and $\eqref{eq:chi22}$ 
is onto. Moreover, $\ker\phi=Y_{A}({\bf u},\la)$, so that %there exists a discrete subgroup $\Ga$ of $\bA_{K,\la}$ such that
$$
\widehat{G_A}\cong \bA_{K,\la}/Y_{A}({\bf u},\la).
$$
\end{thm}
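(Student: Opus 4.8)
The plan is to exploit the fact that, via the injection $\mu$ in \eqref{eq:mu}, the group $G_A$ is identified with $Y_{A^t}({\bf w},\la)\subseteq K$ (Lemma \ref{l:image}), and the character group of $K$ with the discrete topology is the adelic quotient $\bA_K/K$ of Tate \cite{t}. First I would show $\phi$ is well-defined: each $\eta=(\ldots,\eta_\fp,\ldots)\in\bA_{K,\la}$ extends (by zero on the complementary places) to an element of $\bA_K$, hence defines a character of $K$ by \eqref{eq:chi11}; restricting along $\mu$ to $G_A$ gives \eqref{eq:charmain}, and this is a continuous character of the discrete group $G_A$, so $\chi\in\widehat{G_A}$. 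That $\phi$ is a group homomorphism is immediate from the product formula in \eqref{eq:charmain}.

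Next I would prove surjectivity. The exact sequence $0\to \bZ^n\to G_A\to G_A/\bZ^n\to 0$, together with Lemma \ref{l:grchargen}, reduces the problem to two pieces: the ``archimedean/continuous'' part $\bR^n$ and the finite part $\prod_{p\mid\det A}\bZ_p^{t_p}$. Under $\mu$, the standard lattice $\bZ^n$ maps into ${\bZ}[{\bf u}]$ (up to the discriminant factor $m$ in \eqref{eq:diagram}), and $\bR^n$ corresponds precisely to $\prod_{\fp\in\cS_\infty}K_\fp=K\otimes_\bQ\bR$; thus an archimedean character $\La(-{\bf\theta})$ of $G_A$ is realized by the archimedean components $(\eta_\fp)_{\fp\in\cS_\infty}$ via the pairing $\La_\fp$. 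For the finite part, a character $\chi'$ of $G_A/\bZ^n$ is, by Lemma \ref{l:grchargen}, given locally at each $p\mid\det A$ by some ${\bf v}_p\in\bZ_p^n$ through ${\bf x}\mapsto e^{2\pi i\{{\bf x}\cdot{\bf v}_p\}_p}$; I would translate this, place by place, into components $\eta_\fp\in K_\fp$ for $\fp\in\cS_\la$ using $\La_\fp(\xi)=\{\tr_{K_\fp/\bQ_p}\xi\}_p$ and the fact that $\mu$ intertwines the dot product on $\bQ_p^n$ with the trace pairing on $K_\fp$ after diagonalizing $A$ (this is where $M$ and $(M^t)^{-1}$, i.e. ${\bf u}$ and ${\bf w}$, enter). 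Since every prime dividing $\det A$ either has $h_A\equiv x^n\bmod p$ (so $\overline G_{A,p}=\bQ_p^n$ and a prime above it divides $\la$) or lies in $\cP'$, all the relevant finite places are accounted for by $\cS_\la$, so surjectivity follows.

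Then I would compute the kernel. If $\phi(\eta)$ is the trivial character of $G_A$, then the character of $K$ attached to $\eta$ (extended by zero) is trivial on $\mu(G_A)=Y_{A^t}({\bf w},\la)$. By Lemma \ref{l:mu1} this contains $\cO_{K,\la}$, and in fact $Y_{A^t}({\bf w},\la)\supseteq\cO_{K,\la}\supseteq\cO_K$; so $\phi(\eta)$ is trivial on a subgroup of $K$ of finite index in an order, hence — using that the only characters of $\bA_K$ trivial on all of $K$ come from $K$ itself (self-duality, \cite{t}) and a density/annihilator argument — one gets $\eta\equiv\xi\bmod(\text{the components outside }\cS_\infty\cup\cS_\la)$ for a unique $\xi\in K$, and moreover $\xi$ must annihilate $\mu(G_A)$ under the full adelic pairing restricted to $\cS_\infty\cup\cS_\la$. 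Tracking through the isomorphism $G_A\cong Y_{A^t}({\bf w},\la)$ and the $\cdot m$ duality in diagram \eqref{eq:diagram}, the annihilator of $Y_{A^t}({\bf w},\la)$ for this pairing is exactly $Y_A({\bf u},\la)$ — essentially because ${\bf w}\cdot{\bf u}=1$ makes $Y_A({\bf u},\la)$ and $Y_{A^t}({\bf w},\la)$ dual lattices place-by-place. Hence $\ker\phi=Y_A({\bf u},\la)$, and $\phi$ induces the claimed isomorphism $\widehat{G_A}\cong\bA_{K,\la}/Y_A({\bf u},\la)$ of topological groups (openness of $\phi$, equivalently continuity of the inverse, follows since $\bA_{K,\la}$ is locally compact and the quotient is Hausdorff).

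The main obstacle I anticipate is the kernel computation: matching the annihilator of $Y_{A^t}({\bf w},\la)$ under the adelic trace pairing with $Y_A({\bf u},\la)$ requires a careful local duality statement at each finite place $\fp\in\cS_\la$ — that the $\bZ[\la^{\pm1}]_\fp$-module generated by the coordinates of ${\bf w}$ is the $\La_\fp$-dual of the one generated by the coordinates of ${\bf u}$ — and at the archimedean places the analogous statement over $K\otimes\bR$. The relation ${\bf w}\cdot{\bf u}=1$ (dual eigenvectors) plus the self-duality of $\bZ^n$ under the standard dot product is the lever, but pushing it through the change of variables given by $M$, and bookkeeping the discriminant factor $m$, is the delicate part; I would isolate it as a lemma about dual lattices attached to ${\bf u}$ and ${\bf w}$.
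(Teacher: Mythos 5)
Your overall strategy is the same as the paper's (identify $G_A$ with $Y_{A^t}({\bf w},\la)$ via $\mu$, use Lemma \ref{l:grchargen} and Tate's adelic duality, and exploit the dual eigenvector relation ${\bf w}\cdot{\bf u}=1$), but two steps you treat as routine are exactly where the real work lies, and as written they have gaps. First, surjectivity: when you translate the finite part of a character through the trace pairing, the identity you need is \eqref{eq:xvp}, namely $\{{\bf x}\cdot{\bf v}_p\}_p=\sum_{\fp\vert p}\La_{\fp}(\eta_{\fp}x_1)$ with $\eta_{\fp}={\bf v}_p\cdot{\bf u}$, and this sum runs over \emph{all} primes of $K$ above $p$, not only those in $\cS_{\la}$. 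Your dichotomy (``either $h_A\equiv x^n \bmod p$ or $p\in\cP'$'') does not dispose of the extra places: precisely when $p\in\cP'$ (so $0<t_p<n$) there are primes $\fp\vert p$ with $\fp\nmid\la$, and their contributions are not automatically trivial, because elements $x_1\in\mu(G_A)=Y_{A^t}({\bf w},\la)$ need not lie in $\cO_{\fp}$ at such places (their denominators involve $\det M$). What makes the reduction to $\bA_{K,\la}$ work in the paper (Lemma \ref{l:onto}) is that $\val_{\fp}(x_1)$ is bounded below uniformly in $x_1$ at these places, so after a Chinese Remainder Theorem shift of $\eta$ by an element $a\in\bZ[{\bf u}]$ (which lies in the kernel) the unwanted components pair integrally with every $x_1$ and drop out, as in \eqref{eq:leftright}. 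Without that argument your map from $\bA_{K,\la}$ is only shown to hit characters coming from the larger product over all places above $\cP$.

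Second, the kernel: your claim that $Y_{A}({\bf u},\la)$ and $Y_{A^t}({\bf w},\la)$ are ``dual lattices place-by-place'' is not literally true for the relevant pairing, which only involves $\cS_{\infty}\cup\cS_{\la}$; the duality $\tr_{K/\bQ}(u_iw_j)=\delta_{ij}$ is a global statement, and passing between the $\cS_{\la}$-sum and the global one reintroduces the finite places $\fp\vert p$, $\fp\notin\cS_{\la}$, where again integrality of $\eta x_1$ must be argued (this is the role of the $T_p$ terms in Lemmas \ref{l:incl} and \ref{l:inside}, the unit argument for $\la$ at $\fp\nmid\la$, and the final step ${\bf s}\in(\caR\cap_{p\in\cP}\bZ_{(p)})^n=\bZ^n$). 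Similarly, ``self-duality of $\bA_K$ plus a density/annihilator argument'' is a placeholder for the concrete content of Lemma \ref{l:constant}, where one shifts by an element of $\cO_{K,\la}$ via CRT, uses $MM^t\in\M_n(\bZ)$ to force the archimedean components to be a single element $b\in K$, and then solves a nonsingular linear system with matrix $M^t$ to kill the components at $\cS_{\la}$. You correctly identified these as the delicate points, but the proposal defers them rather than proving them, and both the surjectivity reduction and the annihilator computation fail without the uniform-valuation/CRT arguments; so the proof is incomplete as it stands.
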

%\begin{proof} 
We divide the proof of Theorem \ref{th:onto} into parts proved in Lemmas \ref{l:onto} -- \ref{l:inside} below. Namely, $\phi$ is onto (Lemma \ref{l:onto}), $Y_{A}({\bf u},\la)\subseteq\ker\phi$ (Lemma \ref{l:incl}), if $\eta\in\bA_{K,\la}$ is trivial on $\cO_{K,\la}$, then $\eta\in K$ (Lemma \ref{l:constant}), and 
$\ker\phi \subseteq Y_{A}({\bf u},\la)$ (Lemma \ref{l:inside}).
\begin{lem}\label{l:onto}
$\phi$ is onto.
\end{lem}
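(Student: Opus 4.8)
The statement is that $\phi:\bA_{K,\la}\to\widehat{G_A}$ of \eqref{eq:charmain} is onto. I want to reduce this to the known description of $\widehat{G_A}$ from Lemma \ref{l:grchargen} together with the adelic picture, so the plan is to match up the two parametrizations place by place. Recall from Lemma \ref{l:grchargen} that (after reducing to the case $G_A$ dense in $\bR^n$, using \eqref{eq:hats}) every $\chi\in\widehat{G_A}$ can be written $\chi=\La(-\bm\theta)\chi'$ with $\bm\theta\in\bR^n$ and $\chi'\in\widehat{G_A/\bZ^n}\cong\prod_{p\mid\det A}\bZ_p^{t_p}$, and $\chi'$ is given by a collection of $p$-adic vectors ${\bf v}_p\in\bZ_p^n$ via $\chi'({\bf x})=\prod_p e^{2\pi i\{{\bf x}\cdot{\bf v}_p\}_p}$. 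So it suffices to realize both the archimedean factor $\La(-\bm\theta)$ and each finite factor $e^{2\pi i\{{\bf x}\cdot{\bf v}_p\}_p}$ by a suitable choice of $\eta=(\ldots,\eta_{\fp},\ldots)\in\bA_{K,\la}$.

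\emph{Archimedean places.} Via the isomorphism $\mu:G_A\hookrightarrow K$ of \eqref{eq:mu}, the pairing ${\bf x}\cdot{\bf y}$ for ${\bf y}\in\bR^n$ should translate, under ${\bf x}=\sum_i\sg_i(x_1{\bf u})$, into $\sum_{\fp\in\cS_\infty}\La_{\fp}(\eta_{\fp}x_1)$ for an appropriate $\eta_{\fp}\in K_{\fp}$; concretely, writing ${\bf y}$ in the dual eigenbasis $\{\sg_i({\bf w})\}$ recovers an element of $K\otimes_{\bQ}\bR=\prod_{\fp\in\cS_\infty}K_{\fp}$, and this identification is exactly what the definitions of $\La_{\fp}$ for real and complex $\fp$ are set up to make work (the $-\xi$ and $-2\Re\xi$ are the real and complex traces). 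So any archimedean character $\La(-\bm\theta)$ of $\bR^n$, restricted to $G_A$, is hit by choosing $(\eta_{\fp})_{\fp\in\cS_\infty}$ to be the image of $\bm\theta$ under $\bR^n\cong\prod_{\fp\in\cS_\infty}K_{\fp}$. This is essentially a linear-algebra identity over $\bR$ and I expect it to be routine.

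\emph{Finite places.} For a prime $p\mid\det A$, the local character ${\bf x}\mapsto e^{2\pi i\{{\bf x}\cdot{\bf v}_p\}_p}$ needs to be matched with $\prod_{\fp\mid p,\ \fp\in\cS_\la}e^{2\pi i\La_{\fp}(\eta_{\fp}\mu({\bf x}))}$. Here one uses $\bQ_p\otimes_{\bQ}K=\prod_{\fp\mid p}K_{\fp}$, that $\La_{\fp}(\xi)=\{\tr_{K_{\fp}/\bQ_p}\xi\}_p$, and that the trace pairing $K_{\fp}\times K_{\fp}\to\bQ_p$ is perfect, so $\{{\bf x}\cdot{\bf v}_p\}_p=\sum_{\fp\mid p}\La_{\fp}(\eta_{\fp}\mu({\bf x}))$ for a suitable $(\eta_{\fp})_{\fp\mid p}\in\prod_{\fp\mid p}K_{\fp}$ determined by ${\bf v}_p$. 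The one place where care is needed: $\bA_{K,\la}$ only contains the finite places in $\cS_\la$, i.e.\ the primes $\fp\mid\la$, whereas $\chi'$ a priori involves all $p\mid\det A$, including primes $p\in\cP\setminus\cP'$ and primes whose $\fp$'s don't all divide $\la$. I would handle this exactly as in the proof of Lemma \ref{l:mu1}: for $p\in\cP$, the ``divisible'' part $D_p(A)$ of $\overline{G}_{A,p}$ is spanned by the eigenvectors $\sg_i({\bf u})$ with $\fp\mid\la_i$, and on the complementary ($\bZ_p$-lattice) part the character $\chi'_p$ is trivial since it factors through $\overline{G}_{A,p}/\bZ_p^n\cong(\bQ_p/\bZ_p)^{t_p}$; thus only the components ${\bf v}_p$ paired against $\Span\{\sg_i({\bf u}):\fp\mid\la_i\}$ contribute, and these correspond precisely to the places in $\cS_\la$ above $p$.

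\emph{Main obstacle.} The genuine content is the bookkeeping in the previous paragraph: showing that after translating through $\mu$, the character $\chi'$ — which is manifestly supported on $\prod_{p\mid\det A}\bZ_p^{t_p}$ — depends only on data living on $\cS_\la$, i.e.\ that the ``extra'' finite places (primes $p\in\cP\setminus\cP'$, and for $p\in\cP'$ the prime ideals above $p$ not dividing $\la$) impose no obstruction and contribute nothing. Once that identification is in hand, surjectivity of $\phi$ follows by assembling the archimedean $\eta_{\fp}$'s and the finite $\eta_{\fp}$'s into a single idele-like vector of $\bA_{K,\la}$ and invoking Lemma \ref{l:grchargen}. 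I would structure the write-up as: (i) reduce to $G_A$ dense via \eqref{eq:hats}; (ii) recall the $(\bm\theta,\chi')$ decomposition; (iii) realize the archimedean part; (iv) realize each finite part, using the $D_p(A)$ argument to confine attention to $\cS_\la$; (v) combine. The perfectness of the local trace pairings and the self-duality of $\bQ_p/\bZ_p$ (already cited in Lemma \ref{l:grchargen}) are the only external inputs.
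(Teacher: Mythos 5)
Your steps (i)--(iii), and the trace identity you use at the finite places, coincide with what the paper does: one first realizes every character, via Lemma \ref{l:grchargen} and the identity $\{{\bf x}\cdot{\bf v}_p\}_p=\sum_{\fp\mid p}\La_{\fp}(\eta_{\fp}\,\mu({\bf x}))$ with $\eta_{\fp}={\bf v}_p\cdot{\bf u}$, as coming from the \emph{larger} product $\bA_{K,\cS}$ over all places above $\cP$. The gap is in your step (iv), i.e.\ exactly the passage from $\cS$ down to $\cS_{\la}$, which is the actual content of the lemma. Your justification -- that $\chi'_p$ factors through $\overline{G}_{A,p}/\bZ_p^n\cong(\bQ_p/\bZ_p)^{t_p}$, so ``only the places in $\cS_\la$ contribute'' -- does not work as stated: triviality on $\bZ_p^n$ controls only the \emph{sum} over all $\fp\mid p$, not the individual local terms. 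For $\fp\mid p$ with $\fp\notin\cS_{\la}$ one has $\eta_{\fp}\in\cO_{\fp}$, but $x_1=\mu({\bf x})\in Y_{A^t}({\bf w},\la)$ is in general \emph{not} $\fp$-integral there (the entries of ${\bf w}$ carry $(\det M)^{-1}$; one only has a uniform bound $p^{k_p}x_1\in\cO_{\fp}$), so $\La_{\fp}(\eta_{\fp}x_1)$ need not vanish at those places. Hence simply discarding the components outside $\cS_{\infty}\cup\cS_{\la}$ changes the character, and no element of $\bA_{K,\la}$ has been produced.

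What is missing is the use of the kernel. The paper first shows, via \cite[Lemma 4.1.5]{t} (the additive product formula), that the diagonal image of $\caR[{\bf u}]$ -- in particular $\bZ[{\bf u}]$ -- acts trivially; it then reduces to a representative with $\eta_{\fp}=\eta_p\in\bZ_p[{\bf u}]$ for all $\fp\mid p$, and chooses by the Chinese Remainder Theorem an $a\in\bZ[{\bf u}]$ with $\eta_p-a\in p^{k_p}\bZ_p[{\bf u}]$ for every $p\in\cP$. Then $(\eta_p-a)x_1\in\cO_{\fp}$ for all $\fp\mid p$ with $\fp\notin\cS_{\la}$, those local terms vanish identically in $x_1$, and the surviving components at $\cS_{\infty}\cup\cS_{\la}$ realize the same character, proving surjectivity of $\phi$. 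Your proposal invokes neither the product formula (so you have no license to modify $\eta$ without changing the character) nor the uniform denominator bound coming from $\det M$, and it conflates the divisible directions in $\overline{G}_{A,p}$ with places of $K$; note also that for $p\in\cP\setminus\cP'$ every $\fp\mid p$ already lies in $\cS_{\la}$, so the only nontrivial case is $p\in\cP'$, precisely where this correction argument is needed.
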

\begin{proof}[Proof of Lemma $\ref{l:onto}$]
Let $\cS$ denote the set of {\it all} finite places of $K$ dividing all $p\in\cP$, {\it i.e.}, {\it all} prime ideals of $\cO_K$ lying above all $p\in\cP$ as opposed to $\cS_{\la}$, the set of prime ideals of $\cO_K$ dividing $\la$. Let
$\bA_{K,\cS}=\prod_{\fp\in\cS_{\infty}\cup\cS}K_{\fp}$. Similar to \eqref{eq:charmain}, we also have a natural homomorphism
$\psi:\bA_{K,\cS}\to \widehat{Y_{A^t}({\bf w},\la)}\cong\widehat{G_A}$ given by $(\ldots,\eta_{\fp},\ldots)\mapsto \chi$,
\bbe\label{eq:psii}
\chi(x_1)=\prod_{\fp\in \cS_{\infty}\cup\cS} e^{2\pi i\La_{\fp}(\eta_{\fp}x_1)},\quad{x}_1\in Y_{A^t}({\bf w},\la)
\ee
(see Lemma \ref{l:image}). 
We now show that $\psi$ is onto. Indeed, 
by Lemma \ref{l:grchargen}, for any $\chi\in\widehat{G_A}$  there exist ${\bf\theta}\in\bR^n$ and ${\bf v}_p\in\bZ_p^n$, $p\in\cP$, such that 
\bbe\label{eq:xxxx}
\chi({\bf x})=e^{-2\pi i{\bf x}\cdot{\bf \theta}} \prod_{p\in\cP}e^{2\pi i\{{\bf x}\cdot{\bf v}_p\}_p},\quad {\bf x}\in G_A.
\ee
Let $n=r_1+2r_2$, where $r_1$ is the number of real roots of $h_A$ and $r_2$ is the number of pairs of conjugate complex roots of $h_A$. Without loss of generality, we can assume that $\cS_{\infty}=\{\sg_1,\ldots,\sg_{r_1},\sg_{r_1+1},\ldots,\sg_{r_1+r_2}\}$. For ${\bf\theta}\in\bR^n$, denote
$\eta_{\fp}=\sg_j({\bf u})\cdot{\bf\theta}\in K_{\fp}$, where $\fp\in\cS_{\infty}$ corresponds to $\sg_j$, $j\in\{1,\ldots,r_1+r_2\}$.  Then, 
\bbe\label{eq:infty1}
e^{-2\pi i {\bf x}\cdot {\bf\theta}}=\prod_{\fp\,\in\,\cS_{\infty}}e^{2\pi i\La_{\fp}(\eta_{\fp}x_1)},\quad
x_1=\mu({\bf x}).
%\quad\text{for any }{\bf x}\in G_A,
\ee
Recall that ${\bf x}=\sum_{i=1}^{n}\sg_i(x_1{\bf u})$. Then 
\bbe\label{eq:xvp}
\{{\bf x}\cdot{\bf v}_p\}_p=\sum_{\fp\vert p}\{\tr_{K_{\fp}/\bQ_p}({\bf v}_p\cdot {\bf u}x_1)\}_p=
\sum_{\fp\vert p}\La_{\fp}(\eta_{\fp}x_1),\quad \eta_{\fp}={\bf v}_p\cdot {\bf u},
\ee
which together with \eqref{eq:infty1}, shows that $\psi$ given by \eqref{eq:psii} is onto. Denote
$$
\caR[{\bf u}]=\Span_{\caR}(u_1,\ldots,u_n),\,\,
\bZ[{\bf u}]=\Span_{\bZ}(u_1,\ldots,u_n),\,\, \bZ_p[{\bf u}]=\bZ[{\bf u}]\otimes_{\bZ}\bZ_p,
$$
where $\caR[{\bf u}]\subset K$, $\bZ[{\bf u}]\subset \cO_K$, and $\bZ_p[{\bf u}]\subset \cO_{\fp}$.
We claim that $\caR[{\bf u}]\subseteq \ker\psi$, where $\caR[{\bf u}]$ is embedded diagonally into $\bA_{K,\cS}$. Indeed, if
$\eta={\bf r}\cdot{\bf u}\in\caR[{\bf u}]$, ${\bf r}\in\caR^n$, then
$$
\sum_{\fp\in \cS_{\infty}\cup\cS}\La_{\fp}(\eta x_1)=-{\bf r}\cdot{\bf x}+\sum_{p\in\cP}\{{\bf r}\cdot{\bf x}\}_p=0
$$
by \cite[Lemma 4.1.5]{t}, since ${\bf r}\cdot{\bf x}\in\caR$ ({\it i.e.}, 
${\bf r}\cdot{\bf x}\in\bQ$ and has only primes from $\cP$ in the denominators) for any ${\bf x}\in G_A$. 
Hence,  
$$
(\ldots,\eta,\ldots)\mapsto\chi(x_1)=\prod_{\fp\in \cS_{\infty}\cup\cS} e^{2\pi i\La_{\fp}(\eta x_1)}=1
$$
for any $x_1\in Y_{A^t}({\bf w},\la)$.
One can show that any element from $\bA_{K,\cS}$ is equivalent to an element $\eta=(\ldots,\eta_{\fp},\ldots)$ from 
$\Om_1=\prod_{\fp\in\cS_{\infty}}K_{\fp}\prod_{p\in\cP}\prod_{\fp\vert p}\bZ_p[{\bf u}]$ modulo $\caR[{\bf u}]$, where for each $p\in\cP$ there exists $\eta_p\in\bZ_p[{\bf u}]$ such that
$\eta_{\fp}=\eta_p$ for any $\fp\vert p$. It also follows from \eqref{eq:xvp}.

\sbr

We now consider the restriction of $\psi$ from $\bA_{K,\cS}$ to $\bA_{K,{\la}}$ and show that the restriction is also onto $\widehat{Y_{A^t}({\bf w},\la)}\cong\widehat{G_A}$. This is true, because even though $x_1$ might not be in $\cO_{\fq}$ for any prime ideal $\fq$ not in $\cS_{\la}$, but $\val_{\fq}x_1$ is bounded from below by a constant that does not depend on $x_1$. Indeed, by the previous paragraph, without loss of generality, we can assume that $\chi\in \widehat{Y_{A^t}({\bf w},\la)}$ is defined by $\eta\in\Om_1$. Let $L$ be a finite Galois extension of $\bQ$ containing all eigenvalues of $A$, {\it e.g.}, $L$ is the splitting field of $h_A$. 
Recall that $x_1=\mu({\bf x})=a\la^{-k}(\det M)^{-1}$, where $a\in\cO_L$, $k\in\bN\cup\{0\}$, $\det M\ne 0\in\cO_L$ by Lemma \ref{l:image}. For any prime ideal $\fq$ of $\cO_L$ not dividing $\la$ lying above a prime $q\in\bN$, there exists $k_{\fq}\in\bN\cup\{0\}$ such that ${q}^{k_{\fq}}(\det M)^{-1}\in\cO_{\fq}$. Since there are finitely many prime ideals $\fq$ of 
$\cO_L$ lying above $q$, by taking the maximum among all $k_{\fq}$, we can assume that there exists $k_q\in\bN\cup\{0\}$ such that $q^{k_{q}}(\det M)^{-1}\in\cO_{\fq}$ for any $\fq$ above $q$. Then, 
$p^{k_p}x_1\in\cO_{\fp}$ for any $x_1$ and any prime ideal $\fp$ of $K$ above $p\in\cP$ not dividing $\la$, {\it i.e.}, 
$\fp\not\in \cS_{\la}$. We now write each $\eta_p\in\bZ_p[{\bf u}]$ as $\eta_p=a_p+p^{k_p}\mu_p$ for $a_p\in\bZ[{\bf u}]$
and $\mu_p\in\bZ_p[{\bf u}]$. By the Chinese Remainder Theorem, there exists $a\in\bZ[{\bf u}]$ such that 
$\eta_p-a\in p^{k_p}\bZ_p[{\bf u}]$ for any $p\in\cP$. Then $\eta-a$ defines a character of $\widehat{Y_{A^t}({\bf w},\la)}$ as follows:
\bbe\label{eq:leftright}
(\ldots,\eta_p-a,\ldots)\mapsto\chi(x_1)=\prod_{\fp\in \cS_{\infty}\cup\cS} e^{2\pi i\La_{\fp}((\eta_p-a) x_1)}=
\prod_{\fp\in \cS_{\infty}\cup\cS_{\la}} e^{2\pi i\La_{\fp}((\eta_p-a) x_1)}.
\ee
Indeed, $\La_{\fp}((\eta_p-a) x_1)=0$ for any $\fp$ not in $\cS_{\la}$ above a prime $p\in\cP$,
since $(\eta_p-a)x_1\in\cO_{\fp}$.  
This shows that every character of $Y_{A^t}({\bf w},\la)$, equivalently, of $G_A$, comes from an element from 
$\bA_{K,{\la}}$, hence $\phi$ is onto.
\end{proof}

\begin{lem}\label{l:incl}
$Y_{A}({\bf u},\la)\subseteq\ker\phi$.
\end{lem}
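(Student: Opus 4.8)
The plan is to fix $\eta\in Y_{A}({\bf u},\la)$, regarded diagonally inside $\bA_{K,\la}$, and to show that the character $\chi=\phi(\eta)$ is trivial on $G_A$; equivalently, that $\sum_{\fp\in\cS_{\infty}\cup\cS_{\la}}\La_{\fp}\big(\eta\,\mu({\bf x})\big)\in\bZ$ for every ${\bf x}\in G_A$. Throughout write $\tau=\eta\,\mu({\bf x})\in K$.

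The first step is to pin down the shape of $\tau$. Since $\bZ[{\bf u}]$ is stable under multiplication by $\la$ (because $A\in\M_n(\bZ)$ and $A{\bf u}=\la{\bf u}$), the definition \eqref{eq:ya} lets us write $\eta=\la^{l}v$ with $v\in\bZ[{\bf u}]$, $l\in\bZ$; by Lemma \ref{l:image} we may write $\mu({\bf x})=\la^{k}y$ with $y\in\bZ[{\bf w}]$, $k\in\bZ$. Writing $v={\bf a}\cdot{\bf u}$, $y={\bf b}\cdot{\bf w}$ with ${\bf a},{\bf b}\in\bZ^{n}$, and using that ${\bf u}{\bf w}^{t}$ is the spectral projection of $A$ onto its $\la$-eigenline (here $h_A$ irreducible forces $A$ to have $n$ distinct eigenvalues), Lagrange interpolation gives ${\bf u}{\bf w}^{t}=q(A)/h_{A}'(\la)$ with $q(x):=h_{A}(x)/(x-\la)\in\bZ[\la][x]$; hence $q(A)\in\M_n(\bZ[\la])$ and $vy={\bf a}^{t}q(A){\bf b}/h_{A}'(\la)$ with ${\bf a}^{t}q(A){\bf b}\in\bZ[\la]$. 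Therefore $\tau=\la^{k+l}g(\la)/h_{A}'(\la)$ for some $g\in\bZ[x]$. Since $h_A(\la)=0$ shows $\la^{-1}\in(\det A)^{-1}\bZ[\la]$, and since $h_{A}'(\la)$ generates the different of the order $\bZ[\la]$ over $\bZ$ (so $\tr_{K/\bQ}\big(h_{A}'(\la)^{-1}\bZ[\la]\big)\subseteq\bZ$), we conclude in particular that $\tr_{K/\bQ}(\tau)\in\caR$. (Alternatively one can get $\tr_{K/\bQ}(\tau)\in\caR$ from the isomorphism $G_{A^{t}}\cong Y_{A}({\bf u},\la)$ of \eqref{eq:diagram} and the trace‑duality $\tr_{K/\bQ}(u_iw_j)=\delta_{ij}$, which identifies $\tr_{K/\bQ}(\tau)$ with a dot product ${\bf z}\cdot{\bf x}$, ${\bf z}\in G_{A^{t}}$.)

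Now expand $\chi({\bf x})=\prod_{\fp\in\cS_{\infty}\cup\cS_{\la}}e^{2\pi i\La_{\fp}(\tau)}$. Over the archimedean places one has $\sum_{\fp\in\cS_{\infty}}\La_{\fp}(\tau)=-\tr_{K/\bQ}(\tau)$. Next I would invoke the reciprocity law $\sum_{\fp}\La_{\fp}(\tau)\in\bZ$, the sum running over \emph{all} places of $K$ (the $K$‑analogue of \cite[Lemma 4.1.5]{t}; it follows from that lemma applied to $\tr_{K/\bQ}(\tau)\in\caR$ together with $\sum_{\fp\mid p}\La_{\fp}(\tau)\equiv\{\tr_{K/\bQ}(\tau)\}_{p}\pmod\bZ$). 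It follows that $\sum_{\fp\in\cS_{\infty}\cup\cS_{\la}}\La_{\fp}(\tau)\equiv-\sum_{\fp\ \mathrm{finite},\ \fp\notin\cS_{\la}}\La_{\fp}(\tau)\pmod\bZ$, so the proof reduces to showing that the sum of the local terms over the finite places not dividing $\la$ is an integer.

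The remaining point — which I expect to be the crux — is handled prime by prime. For a fixed prime $p$, the places $\fp\mid p$ with $\fp\notin\cS_{\la}$ are exactly those at which $\la$ is a unit, and grouping them yields $\prod_{\fp\mid p,\ \fp\notin\cS_{\la}}K_{\fp}\cong\bQ_{p}[x]/(h_{1})$, where $h_{A}=h_{1}h_{2}$ is the Hensel factorization over $\bZ_{p}$ with $h_{2}\equiv x^{t_{p}}\ (\mathrm{mod}\ p)$ and $p\nmid h_{1}(0)$. In $R:=\bZ_{p}[x]/(h_{1})$ the image $\la_{1}$ of $\la$ is a unit with $\la_{1}^{-1}\in R$ (since $h_{1}(0)\in\bZ_{p}^{\times}$), and $h_{2}(\la_{1})\in R^{\times}$ (its norm is $\pm\Res(h_{1},h_{2})$, a $p$‑unit because $h_1$'s roots are units while $h_2$'s are not); as $h_{A}'\equiv h_{1}'h_{2}\ (\mathrm{mod}\ h_{1})$, the image of $\tau=\la^{k+l}g(\la)/h_{A}'(\la)$ in $R\otimes\bQ_{p}$ lies in $h_{1}'(\la_{1})^{-1}R$, whose trace to $\bQ_{p}$ lands in $\bZ_{p}$ since $h_1'(\la_1)$ generates the different of $R$ over $\bZ_{p}$. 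Hence $\sum_{\fp\mid p,\ \fp\notin\cS_{\la}}\La_{\fp}(\tau)=\big\{\tr_{(\bQ_p[x]/(h_1))/\bQ_{p}}(\tau_{1})\big\}_{p}=0$ for every $p$, and summing over $p$ completes the argument. The delicacy is that $\mu(G_A)=Y_{A^{t}}({\bf w},\la)$ may genuinely have poles at primes dividing $\det A$ but not in $\cS_{\la}$, so no place‑by‑place integrality of $\tau$ is available; the cancellation only becomes visible after passing to the whole $h_{1}$‑factor, and it is there that the interplay between the ``different‑like'' denominator $h_{A}'(\la)$ of $\tau$ and the trace is used.
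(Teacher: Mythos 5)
Your proof is correct, but it takes a genuinely different route from the paper. The paper first reduces to $\eta\in\bZ[{\bf u}]$ (using that $\ker\phi$ is a $\bZ[\la^{\pm 1}]$-module) and normalizes so that $\bZ[{\bf w}]\subset\cO_{\fp}$ for $\fp\in\cS_{\la}$; it then reuses the fact, established in the proof of Lemma \ref{l:onto}, that $\caR[{\bf u}]\subseteq\ker\psi$ over the larger ring $\bA_{K,\cS}$, isolates $T_p=\sum_{\fp\vert p,\,\fp\notin\cS_{\la}}\La_{\fp}$, shows $T_p(\eta x_1)=0$ first for $x_1\in\bZ[{\bf w}]$ (via $\{{\bf s}\cdot{\bf x}\}_p=0$ for ${\bf x}\in\bZ^n$ together with the $\cS_{\la}$-integrality), and finally extends to all of $Y_{A^t}({\bf w},\la)$ by noting that $\la$ is a unit at the places outside $\cS_{\la}$, so multiplication by $\la$ is an automorphism of $\bZ_p[{\bf w}]$. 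You instead compute $\eta\,\mu({\bf x})$ in closed form as $\la^{k+l}g(\la)/h_A'(\la)$ via the rank-one projector ${\bf u}{\bf w}^t=q(A)/h_A'(\la)$, invoke the full additive reciprocity over all places of $K$, and verify $p$-integrality of the partial trace over the ``unit block'' $\bQ_p[x]/(h_1)$ using the Hensel factorization, the resultant argument for $h_2(\la_1)\in R^{\times}$, and Euler's inverse-different formula for the monogenic order $\bZ_p[x]/(h_1)$; this treats all finite places outside $\cS_{\la}$ (including those above primes not in $\cP$) uniformly and makes the arithmetic mechanism explicit, namely that $\eta x_1$ lies in $\la^{\bZ}$ times the inverse different of $\bZ[\la]$, whose trace pairing with $\bZ$ is integral away from $\la$. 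What the paper's argument buys is economy: it leans on machinery already set up and avoids any different/resultant computations; what yours buys is a self-contained, element-level explanation and slightly more information about where $\eta x_1$ lives. One small caveat: your final displayed equalities $\sum_{\fp\vert p,\,\fp\notin\cS_{\la}}\La_{\fp}(\tau)=\{\tr(\tau_1)\}_p=0$ and the archimedean bookkeeping are only valid modulo $\bZ$ (sums of fractional parts equal the fractional part of the sum only up to an integer), but since triviality of the character is precisely a statement modulo $\bZ$, this does not affect the argument; also, the preliminary observation $\tr_{K/\bQ}(\tau)\in\caR$ is not actually needed once the prime-by-prime analysis covers all rational primes.
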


\begin{proof}
There exists $k\in\bN$ such that $\la^kx_1\in\cO_{\fp}$ for any $x_1\in\bZ[{\bf w}]$ and any $\fp\in\cS_{\la}$. Since multiplication by a power of $\la$ is an isomorphism of $Y_{A^t}({\bf w},\la)$, by precomposing every character of 
$Y_{A^t}({\bf w},\la)$ with the isomorphism, without loss of generality, we can assume that $x_1$ itself lies in $\cO_{\fp}$, {\it i.e.}, 
$\bZ[{\bf w}]\subset \cO_{\fp}$ for any $\fp\in\cS_{\la}$. Also, since $\ker\phi$ is a $\bZ[\la^{\pm 1}]$-module, it is enough to show that $\bZ[{\bf u}]\subseteq\ker\phi$. Let $\eta={\bf s}\cdot{\bf u}\in\bZ[{\bf u}]$, ${\bf s}\in\bZ^n$. 
By the proof of the previous lemma, 
$\caR[{\bf u}]\subseteq \ker\psi$. Thus, for $\eta\in\bZ[{\bf u}]$ we have that 
\bbe\label{eq:help}
0=\sum_{\fp\in\cS_{\infty}\cup\cS}\La_{\fp}(\eta x_1)=\sum_{\fp\in\cS_{\infty}\cup\cS_{\la}}\La_{\fp}(\eta x_1)+
\sum_{p\in\cP}\sum_{\fp\vert p,\, \fp\not\in\cS_{\la}}\La_{\fp}(\eta x_1),\quad\forall x_1\in Y_{A^t}({\bf w},\la).
\ee
Denote $T_p=\sum_{\fp\vert p,\, \fp\not\in\cS_{\la}}\La_{\fp}$. From \eqref{eq:help}, it is enough to show that 
$T_p(\eta x_1)=0$ for any $x_1\in Y_{A^t}({\bf w},\la)$ and $p\in\cP$.
We have that 
\bbe\label{eq:summm}
\{{\bf s}\cdot{\bf x}\}_p=\sum_{\fp\vert p}\La_{\fp}(\eta x_1)=
\sum_{\fp\vert p,\,\fp\in\cS_{\la}}\La_{\fp}(\eta x_1)+T_p(\eta x_1),\quad\forall x_1\in Y_{A^t}({\bf w},\la).
\ee
Since $x_1\in\bZ[{\bf w}]$ if and only if ${\bf x}\in\bZ^n$, we have that $\{{\bf s}\cdot{\bf x}\}_p=0$ for any $x_1\in\bZ[{\bf w}]$. In addition, $\La_{\fp}(\eta x_1)=0$ for any $x_1\in\bZ[{\bf w}]$ and any $\fp\in\cS_{\la}$, as follows from our assumption. Therefore, from \eqref{eq:summm}, $T_p(\eta x_1)=0$ for any $x_1\in\bZ[{\bf w}]$. Since 
$\la$ is a unit in the ring of integers of $K_{\fp}$ for any $\fp$ that does not divide $\la$, multiplication by $\la$ is a $\bZ_p$-module automorphism of $\bZ_p[{\bf w}]$, hence $T_p(\eta x_1)=0$ for any $x_1\in Y_{A^t}({\bf w},\la)$.
 \end{proof}
 
 \begin{lem}\label{l:constant}
 If $\eta\in\bA_{K,\la}$ is trivial on $\cO_{K,\la}$, then $\eta\in K$. 
%$\ker\phi \subseteq Y_{A}({\bf u},\la)$.
\end{lem}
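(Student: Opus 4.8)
The claim is equivalent to saying that the annihilator of the diagonally embedded $\cO_{K,\la}$ in $\bA_{K,\la}$, with respect to the self-dual pairing $\langle\eta,\xi\rangle=\prod_{\fp\in\cS_\infty\cup\cS_\la}e^{2\pi i\La_\fp(\eta_\fp\xi_\fp)}$, is contained in $K$. The plan is to treat $\cO_{K,\la}$ as a lattice (discrete cocompact subgroup) in $\bA_{K,\la}$, conclude that its annihilator is again a lattice, and then show that this annihilator contains an explicit fractional $\cO_{K,\la}$-ideal which lies in $K$; a commensurability argument then forces the annihilator itself into $K$.

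First I would verify that $\cO_{K,\la}$, diagonally embedded, is discrete and cocompact in $\bA_{K,\la}=\prod_{\fp\in S}K_\fp$, where $S=\cS_\infty\cup\cS_\la$. Let $\bO=\{(\eta_\fp)\in\bA_K:\eta_\fp\in\cO_\fp\ \text{for all}\ \fp\notin S\}$, an open subgroup of the full ad\`ele ring $\bA_K$ with $\bO\cap K=\cO_K[\la^{-1}]=\cO_{K,\la}$. Since $K$ is discrete and cocompact in $\bA_K$ (\cite{t}), $\cO_{K,\la}$ is discrete in $\bO$, and the image of $\bO$ in the compact group $\bA_K/K$ is an open, hence closed, hence compact subgroup; so $\bO/\cO_{K,\la}$ is compact. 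The projection of $\bO$ onto $\bA_{K,\la}$ along the compact factor $\prod_{\fp\notin S}\cO_\fp$ is injective on $\cO_{K,\la}$ and, having compact kernel, carries discreteness and cocompactness over to $\bA_{K,\la}$.

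Since each local pairing $(x,y)\mapsto e^{2\pi i\La_\fp(xy)}$ is a perfect self-duality of $K_\fp$ (for finite $\fp$ this is the local set-up of \cite{t}; at archimedean places it is the usual self-duality of $\bR$ and $\bC$), $\bA_{K,\la}$ is self-dual, and by Pontryagin duality the annihilator $\fA$ of $\cO_{K,\la}$ — exactly the set of $\eta$ that are trivial on $\cO_{K,\la}$ in the sense of the lemma — is again discrete and cocompact. Now I would exhibit a cocompact lattice inside $\fA\cap K$: the fractional ideal $\mathfrak{d}^{-1}\cO_{K,\la}$, where $\mathfrak{d}=\mathfrak{d}_{K/\bQ}$ is the different. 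Indeed, if $a\in\mathfrak{d}^{-1}\cO_{K,\la}$ and $\xi\in\cO_{K,\la}$ then $a\xi\in\mathfrak{d}^{-1}\cO_{K,\la}$, so $\tr_{K_\fp/\bQ_p}(a\xi)\in\bZ_p$ and hence $\La_\fp(a\xi)=0$ for every finite $\fp\notin\cS_\la$; together with the product formula $\sum_{\fp}\La_\fp(a\xi)\in\bZ$ for $a\xi\in K$ (\cite[Lemma 4.1.5]{t}) this gives $\sum_{\fp\in S}\La_\fp(a\xi)\in\bZ$, i.e.\ $\langle a,\xi\rangle=1$, so $a\in\fA$. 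Being commensurable with $\cO_{K,\la}$, the ideal $\mathfrak{d}^{-1}\cO_{K,\la}$ is itself a lattice in $\bA_{K,\la}$.

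Finally, $\fA\supseteq\mathfrak{d}^{-1}\cO_{K,\la}$ with $\fA$ discrete and $\mathfrak{d}^{-1}\cO_{K,\la}$ cocompact, so $\fA/\mathfrak{d}^{-1}\cO_{K,\la}$ is a discrete subgroup of the compact group $\bA_{K,\la}/\mathfrak{d}^{-1}\cO_{K,\la}$, hence finite, of order $N$ say. Then $N\fA\subseteq\mathfrak{d}^{-1}\cO_{K,\la}\subseteq K$, so every $\eta\in\fA$ satisfies $N\eta=a$ for some $a\in K$, whence $\eta=a/N\in K$, which is the assertion. The part that requires genuine care is the first step — that $\cO_{K,\la}$ is a lattice in the truncated ad\`ele ring $\bA_{K,\la}$; after that one must bring in the different $\mathfrak{d}_{K/\bQ}$ (because $\cO_{K,\la}$ itself is generally not self-annihilating under $\langle\cdot,\cdot\rangle$), and the remaining finite-index/commensurability argument is purely formal.
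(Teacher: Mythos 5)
Your proof is correct, but it follows a genuinely different route from the paper's. The paper argues by hand: it uses the Chinese Remainder Theorem to subtract an element $a\in\cO_{K,\la}$ making the components at $\cS_\la$ integral, then tests the resulting character against the basis $u_1,\ldots,u_n$ of $\cO_K$ at the archimedean places, using $MM^t\in\M_n(\bZ)$ to conclude that the archimedean part is a single constant $b\in K$, and finally a linear system with the nonsingular matrix $M^t$ forces each finite component to equal $b$ as well, so $\eta=a+b\in K$. You instead package the statement structurally: $\cO_{K,\la}$ is a discrete cocompact subgroup of the truncated ad\`ele ring $\bA_{K,\la}$, the pairing $\prod_\fp e^{2\pi i\La_\fp(\eta_\fp\xi_\fp)}$ is a perfect self-duality, so the annihilator $\fA$ (which is exactly the set in question, with ``trivial'' correctly read as the sum of the $\La_\fp$ lying in $\bZ$) is discrete; local duality of the inverse different together with Tate's product formula shows $\fd_{K/\bQ}^{-1}\cO_{K,\la}\subseteq\fA$, and then a finite-index argument in the compact quotient $\bA_{K,\la}/\fd^{-1}\cO_{K,\la}$ gives $N\fA\subseteq K$, hence $\fA\subseteq K$. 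Each step checks out: the lattice property of $S$-integers in $\prod_{v\in S}K_v$ is standard and your $\bO$-argument is sound; discreteness of $\fA$ is dual to cocompactness of $\cO_{K,\la}$; and $\tr_{K_\fp/\bQ_p}$ maps the local inverse different into $\bZ_p$, so the product formula kills the character on $\cO_{K,\la}$. The only point you assert without proof is the commensurability $[\fd^{-1}\cO_{K,\la}:\cO_{K,\la}]<\infty$ (needed so that $\fd^{-1}\cO_{K,\la}$ is closed and cocompact); this is easy since $N_{K/\bQ}(\fd)\,\fd^{-1}\subseteq\cO_K$, so the index is a quotient of the finite ring $\cO_{K,\la}/N_{K/\bQ}(\fd)\cO_{K,\la}$. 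What the two approaches buy: the paper's computation is elementary, stays within the data already set up (the eigenvector basis and the matrix $M$), and exhibits the element of $K$ explicitly; yours is shorter conceptually, relies on heavier but standard input (lattices in truncated ad\`eles, self-duality, the different), and yields extra information, namely that the annihilator is a fractional $\cO_{K,\la}$-ideal sandwiched between $\fd^{-1}\cO_{K,\la}$ and $K$, whereas the paper invokes the product formula only in the surrounding Lemmas (\ref{l:incl}, \ref{l:inside}) rather than here.
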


\begin{proof}
Let $\eta=(\ldots,\eta_{\fp},\ldots)\in\ker\phi$, {\it i.e.}, 
\bbe\label{eq:kernel}
\sum_{\fp\in\cS_{\infty}\cup\cS_{\la}}\La_{\fp}(\eta_{\fp} x_1)=0,\quad\forall x_1\in Y_{A^t}({\bf w},\la).
\ee
Note that \eqref{eq:kernel} holds for any $x_1\in\cO_{K,\la}$, since $\cO_{K,\la}\subseteq Y_{A^t}({\bf w},\la)$ by Lemma \ref{l:mu1} and Lemma \ref{l:image}. By the Chinese Remainder Theorem, there exists 
$a\in \cO_{K,\la}$ such that $\eta_{\fp}-a\in\cO_{\fp}$ for all $\fp\in\cS_{\la}$. Let $\xi=\eta-a$ with $\xi_{\fp}\in\cO_{\fp}$
for all $\fp\in\cS_{\la}$. Then
\bbe\label{eq:kk}
\sum_{\fp\in\cS_{\infty}\cup\cS_{\la}}\La_{\fp}(\xi_{\fp} x_1)=0,\quad\forall x_1\in \cO_{K,\la},
\ee
since $a\in\cO_{K,\la}$ defines a trivial character on $\cO_{K,\la}$. Moreover, $\La_{\fp}(\xi_{\fp}x_1)=0$ for any $x_1\in\cO_K$ and $\fp\in\cS_{\la}$, hence 
\bbe\label{eq:infty}
\sum_{\fp\in\cS_{\infty}}\La_{\fp}(\xi_{\fp} x_1)=0,\quad\forall x_1\in\cO_K.
\ee
%We have that  $\cO_{K,\la}\subseteq Y_{A^t}({\bf w},\la)$ by Lemma \ref{l:mu1} and Lemma \ref{l:image}. 
%Since $Y_{A}({\bf u},\la)\subseteq\ker\phi$ by Lemma \ref{l:incl}, without loss of generality, we can assume that $\eta_{\fp}\in\cO_{\fp}$ for all $\fp\in\cS_{\la}$. 
As in \cite{t}, we denote by $\overset{\infty}{\xi}$ the projection of $\xi$ onto $\prod_{\fp\in\cS_{\infty}}K_{\fp}$. Since $u_1,\ldots,u_n$ is a basis of $K$ as a $\bQ$-vector space, $\overset{\infty}{u_1},\ldots,\overset{\infty}{u_n}$ is a basis of 
$\prod_{\fp\in\cS_{\infty}}K_{\fp}$ as an $n$-dimensional $\bR$-vector space. Let $\overset{\infty}{\xi}=\sum_{i=1}^n\al_i\overset{\infty}{u_i}$ for some $\al_1,\ldots,\al_n\in\bR$. Applying \eqref{eq:infty} to each $x_1=u_i\in\cO_K$, we get that
$MM^t{\bf\al}\in\bZ^n$ for ${\bf\al}=\left(\begin{matrix}\al_1 &\ldots & \al_n \end{matrix}\right)^t$ with $M$ given by \eqref{eq:lm}. One can check that $MM^t\in\M_n(\bZ)$ and hence $\al_1,\ldots,\al_n\in\bQ$. Thus, 
$\overset{\infty}{\xi}=(b,\ldots,b)$ for $b=\sum_{i=1}^n\al_iu_i$, $b\in K$. We now show that $\xi=b$, equivalently, 
$\xi_{\fp}=0$ for any $\fp\in\cS_{\la}$, so that $\eta=\xi+a=b+a\in K$. Indeed,  
there exists $l\in\bN$ such that $lb\in \cO_K$. Denote $\kappa=l(\xi-b)$, so that $\overset{\infty}{\kappa}=(0,\ldots,0)$,  
$\kappa_{\fp}\in\cO_{\fp}$ for any $\fp\in\cS_{\la}$, and 
$$
\sum_{\fp\in\cS_{\la}}\La_{\fp}(\ka_{\fp} x_1)=0,\quad\forall x_1\in \cO_{K,\la}.
$$
Then, $\sum_{\fp\vert p}\La_{\fp}(\ka_{\fp} x_1)=0$, equivalently, $\sum_{\fp\vert p}\tr_{K_{\fp}/\bQ_p}(\ka_{\fp} x_1)\in\bZ_p$ for any $x_1\in \cO_{K,\la}$ and any $p\in\cP$. Since the image of $\cO_{K,\la}$ under the embedding $K\hrar K_{\fp}$ generates $K_{\fp}$ over $\bZ_p$, we have that 
$\sum_{\fp\vert p}\tr_{K_{\fp}/\bQ_p}(\ka_{\fp} u_i)=0$ for any $p\in\cP$ and $i\in\{1,\ldots,n\}$. It gives a system of linear equations for $\kappa_{\fp}$ with matrix $M^t$, which is non-singular. Therefore, each $\kappa_{\fp}=0$.
\end{proof}

\begin{lem}\label{l:inside}
$\ker\phi \subseteq Y_{A}({\bf u},\la)$.
\end{lem}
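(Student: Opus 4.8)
The plan is to reduce, via Lemma~\ref{l:constant}, to showing that a \emph{diagonal} adèle $b\in K$ lying in $\ker\phi$ must belong to $Y_A({\bf u},\la)$, and then to verify this place by place. Since $\cO_{K,\la}\subseteq Y_{A^t}({\bf w},\la)=\mu(G_A)$ by Lemmas~\ref{l:mu1} and~\ref{l:image}, every $\eta\in\ker\phi$ is trivial on $\cO_{K,\la}$, so Lemma~\ref{l:constant} gives $\eta\in K$; write $\eta=b$. Arguing as in the proof of Lemma~\ref{l:onto} (applying \cite[Lemma 4.1.5]{t} to $\tr_{K/\bQ}(bx_1)\in\bQ$, using $\sum_{\fp\mid p}\tr_{K_{\fp}/\bQ_p}=\tr_{K/\bQ}$ and $\sum_{\fp\in\cS_{\infty}}\La_{\fp}=-\tr_{K/\bQ}$), one has $\sum_{\text{all }\fp}\La_{\fp}(bx_1)\in\bZ$ for all $x_1\in K$. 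Hence $b\in\ker\phi$ is equivalent to
$$
\sum_{\fp\text{ finite},\ \fp\notin\cS_{\la}}\La_{\fp}(bx_1)\in\bZ\qquad\text{for all }x_1\in Y_{A^t}({\bf w},\la).
$$
Splitting this sum by the rational prime $q$ lying under $\fp$, and noting that the contributions of distinct $q$ live in complementary subgroups of $\bQ/\bZ$, the condition is equivalent to requiring, for every prime $q$, that $\sum_{\fp\mid q,\ \fp\notin\cS_{\la}}\tr_{K_{\fp}/\bQ_q}(bx_1)\in\bZ_q$ for all $x_1\in Y_{A^t}({\bf w},\la)$; since $Y_{A^t}({\bf w},\la)=\bigcup_{k\ge0}\la^{-k}\bZ[{\bf w}]$ and the left side is continuous with values in $\bQ_q/\bZ_q$, this extends to all $x_1$ in $Y_{A^t}({\bf w},\la)\otimes_{\bZ}\bZ_q=\bigcup_k\la^{-k}(\bZ[{\bf w}]\otimes\bZ_q)$ inside $V_q:=K\otimes_{\bQ}\bQ_q=\prod_{\fp\mid q}K_{\fp}$.

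The local step uses the identity $\tr_{K/\bQ}(u_iw_j)=\de_{ij}$, which follows from $M^t(M^t)^{-1}=I$ together with the fact (Lemma~\ref{l:image}) that $(M^t)^{-1}$ has columns $\sg_i({\bf w})$; thus $\bZ[{\bf u}]\otimes\bZ_q$ and $\bZ[{\bf w}]\otimes\bZ_q$ are complementary (trace-dual) $\bZ_q$-lattices in $V_q$. If $q\nmid\det A$, no place above $q$ lies in $\cS_{\la}$ and $\la$ acts invertibly on $\bZ[{\bf u}]\otimes\bZ_q$ (its determinant is $\pm\det A$), so $Y_A({\bf u},\la)\otimes\bZ_q=\bZ[{\bf u}]\otimes\bZ_q$ and the condition reads precisely $b\in\bZ[{\bf u}]\otimes\bZ_q=Y_A({\bf u},\la)\otimes\bZ_q$. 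If $q\mid\det A$, factor $h_A=h_1h_2$ over $\bZ_q$ by Hensel's lemma as in the proof of Lemma~\ref{l:mu1}, with $h_1(0)$ a $q$-adic unit and $h_2\equiv x^{t_q}\pmod q$; since $h_1,h_2$ are coprime, $\bZ_q[\la]$ carries orthogonal idempotents $e_1,e_2$, with $e_1$ cutting out the factors $V_T:=\prod_{\fp\mid q,\ \fp\notin\cS_{\la}}K_{\fp}$ of $V_q$ on which $\la$ is a unit. As $\bZ[{\bf u}]\otimes\bZ_q$ and $\bZ[{\bf w}]\otimes\bZ_q$ are $\bZ_q[\la]$-modules, they split orthogonally along $V_q=V_T\oplus V_{S_{\la}}$; hence $Y_A({\bf u},\la)\otimes\bZ_q=e_1(\bZ[{\bf u}]\otimes\bZ_q)\oplus V_{S_{\la}}$ (as $\la$ is topologically nilpotent on $V_{S_{\la}}$), the pairing $\sum_{\fp\mid q,\ \fp\notin\cS_{\la}}\tr_{K_{\fp}/\bQ_q}$ sees only the $V_T$-coordinates, and $e_1(\bZ[{\bf w}]\otimes\bZ_q)$ is the trace-dual of $e_1(\bZ[{\bf u}]\otimes\bZ_q)$ inside $V_T$; so the condition becomes $e_1b\in e_1(\bZ[{\bf u}]\otimes\bZ_q)$, i.e.\ again $b\in Y_A({\bf u},\la)\otimes\bZ_q$.

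Finally, $b\in K\cap\bigcap_q(Y_A({\bf u},\la)\otimes\bZ_q)$. For each of the finitely many $q\mid\det A$ choose $k_q$ with $\la^{k_q}b\in\bZ[{\bf u}]\otimes\bZ_q$; taking $k=\max_q k_q$ and using that $\bZ[{\bf u}]$ is stable under multiplication by $\la$, one gets $\la^kb\in\bZ[{\bf u}]\otimes\bZ_q$ for every prime $q$, hence $\la^kb\in\bZ[{\bf u}]$ and $b\in\la^{-k}\bZ[{\bf u}]\subseteq Y_A({\bf u},\la)$. Combined with Lemma~\ref{l:incl}, this yields $\ker\phi=Y_A({\bf u},\la)$ and finishes the proof of Theorem~\ref{th:onto}. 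I expect the local analysis at the primes $q\mid\det A$ to be the only genuinely delicate point: one must exploit the Hensel factorization of $h_A$ to see that the $\bZ_q[\la]$-lattices $\bZ[{\bf u}]\otimes\bZ_q$ and $\bZ[{\bf w}]\otimes\bZ_q$ decompose compatibly with the partition of the places above $q$ into those dividing $\la$ and those not, so that trace-duality restricted to the ``unit part'' $V_T$ exactly reproduces the membership condition for $Y_A({\bf u},\la)$.
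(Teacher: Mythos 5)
Your proof is correct, and its skeleton is the one the paper uses: reduce via Lemmas \ref{l:mu1}, \ref{l:image} and \ref{l:constant} to a diagonal element $b\in K$, invoke Tate's additive formula to move the kernel condition to the finite places outside $\cS_{\la}$, split by the rational prime underneath, and finish by showing $\la^k b\in\bZ[{\bf u}]$. Where you genuinely diverge is the local step. The paper stays coordinate-based: it writes $\eta={\bf t}\cdot{\bf u}$ and $\la^k\eta={\bf s}\cdot{\bf u}$ with ${\bf t},{\bf s}\in\bQ^n$, uses the identity $\tr_{K/\bQ}(\eta x_1)={\bf t}\cdot{\bf x}$ (your duality $\tr_{K/\bQ}(u_iw_j)=\de_{ij}$ in disguise) to get ${\bf t}\in\caR^n$ from the primes $q\notin\cP$, and at the primes $p\in\cP$ it first multiplies by $\la^k$ so that the $\cS_{\la}$-contributions become integral and testing only against $x_1\in\bZ[{\bf w}]$ (i.e.\ ${\bf x}\in\bZ^n$) gives ${\bf s}\in(\bZ_{(p)})^n$; combining with ${\bf s}=(A^k)^t{\bf t}$ and ${\bf t}\in\caR^n$ then lands ${\bf s}$ in $\bZ^n$. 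You instead establish a purely local membership criterion $b\in Y_A({\bf u},\la)\otimes\bZ_q$ at every prime: trace-duality of the lattices $\bZ[{\bf u}]\otimes\bZ_q$ and $\bZ[{\bf w}]\otimes\bZ_q$ at good primes, and at $q\mid\det A$ the Hensel factorization $h_A=h_1h_2$ with its idempotents splitting $K\otimes_{\bQ}\bQ_q$ into the unit part $V_T$ and the part over $\cS_{\la}$, with duality restricted to $V_T$; you multiply by $\la^k$ only at the very end to globalize. Both routes work: the paper's early multiplication by $\la^k$ avoids the idempotent machinery altogether, while your version buys an explicit local-global description of the diagonal part of $\ker\phi$ and makes the role of the dual bases $\{u_i\}$, $\{w_j\}$ transparent. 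Two minor points: the degenerate case $t_q=n$ (where $h_1=1$, $e_1=0$ and the condition at $q$ is vacuous) is consistent with your argument but deserves a word; and your closing claim that this finishes Theorem \ref{th:onto} also requires surjectivity, i.e.\ Lemma \ref{l:onto}.
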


\begin{proof}
Let $\eta\in\ker\phi$. By Lemma \ref{l:mu1} and
Lemma \ref{l:image},  $\cO_{K,\la}\subseteq\mu(G_A)$, $\mu(G_A)=Y_{A^t}({\bf w},\la)$, so that $\eta$ is trivial on $\cO_{K,\la}$. Hence, 
by Lemma \ref{l:constant}, $\eta\in K$ embedded diagonally into $\bA_{K,\la}$.
By \cite[Lemma 4.1.5]{t}, we have that 
\bbe\label{eq:product1}
\sum_{\fp\in\cS_{\infty}\cup\cS_{\la}}\La_{\fp}(\eta x_1)+\sum_{p\in\cP}\sum_{\fp\vert p,\,\fp\not\in\cS_{\la}}\La_{\fp}(\eta x_1)+\sum_{q\not\in\cP,\,\fq\vert q}\La_{\fq}(\eta x_1)=0,\quad\forall x_1\in Y_{A^t}({\bf w},\la),
\ee
where $\fp,\fq$ are prime ideals of $\cO_K$, and $p,q\in\bN$ are prime numbers. Since $\eta\in\ker\phi$, we have that 
$\sum_{\fp\in\cS_{\infty}\cup\cS_{\la}}\La_{\fp}(\eta x_1)=0$ for any $x_1\in Y_{A^t}({\bf w},\la)$. Then from \eqref{eq:product1}, for any $p\in\cP$ and any $q\not\in\cP$ we have that
$$
T_p(\eta x_1)\equiv\sum_{\fp\vert p,\,\fp\not\in\cS_{\la}}\La_{\fp}(\eta x_1)=0,\quad 
T_q(\eta x_1)\equiv\sum_{\fq\vert q}\La_{\fq}(\eta x_1)=0,\quad\forall{x_1\in Y_{A^t}({\bf w},\la)}.
$$
Since $h_A$ is irreducible by assumption, $K=\Span_{\bQ}(u_1,\ldots,u_n)$. Let $\eta={\bf t}\cdot{\bf u}$, ${\bf t}\in\bQ^n$.
Then $T_q(\eta x_1)=\{\tr_{K/\bQ}(\eta x_1)\}_q=\{{\bf t}\cdot{\bf x}\}_q=0$ for any ${\bf x}\in G_A$. Since $\bZ^n\subseteq G_A$, this implies that ${\bf t}\in\caR^n$. Note that there exists $k\in\bN\cup\{0\}$ such that
\bbe\label{eq:integral}
\la^k\eta x_1\in\cO_{\fp},\quad\forall x_1\in\bZ[{\bf w}],\,\forall\fp\in\cS_{\la}.
\ee
Also, note that $T_p(\la^k \eta x_1)=0$ for any $x_1\in Y_{A^t}({\bf w},\la)$, since $Y_{A^t}({\bf w},\la)$ is a $\bZ[\la^{\pm 1}]$-module. Let $\la^k\eta={\bf s}\cdot{\bf u}$ for some ${\bf s}\in\bQ^n$. 
Note that $x_1\in\bZ[{\bf w}]$ if and only if ${\bf x}\in\bZ^n$. We have that 
$$%\bbe\label{eq:summm1}
\{{\bf s}\cdot{\bf x}\}_p=\sum_{\fp\vert p}\La_{\fp}(\la^k\eta x_1)=
\sum_{\fp\vert p,\,\fp\in\cS_{\la}}\La_{\fp}(\la^k\eta x_1)+T_p(\la^k\eta x_1),\quad\forall x_1\in Y_{A^t}({\bf w},\la),
$$%\ee
and therefore, $\{{\bf s}\cdot{\bf x}\}_p=0$ for ${\bf x}\in\bZ^n$. 
This implies that ${\bf s}\in(\bZ_{(p)})^n$ for any $p\in\cP$, where $\bZ_{(p)}$ consists of all rational numbers $a/b\in\bQ$ such that $(b,p)=1$. We now have that 
$\la^k\eta={\bf s}\cdot {\bf u}$ and $\eta={\bf t}\cdot {\bf u}$, hence ${\bf s}=(A^k)^t{\bf t}$, since $u_1,\ldots,u_n$ are linearly independent over $\bQ$. Thus, ${\bf s}\in(\caR\cap_{p\in\cP}\bZ_{(p)})^n$ and therefore ${\bf s}\in\bZ^n$ and
$\la^k\eta\in\bZ[{\bf u}]$. Hence, $\eta\in\la^{-k}\bZ[{\bf u}]\subset Y_{A}({\bf u},\la)$. This shows that 
$\ker\phi\subseteq Y_{A}({\bf u},\la)$.
\end{proof}

We now find the fundamental domain $\cF$ of the action of $\Ga=\ker\phi$ on $\bA_{K,\la}$. We will use the result in Section \ref{ss:fixed} below to count the number of periodic points of a continuous endomorphism of toroidal solenoid $\bS_{A}$. As in the proof of Lemma \ref{l:onto}, every element
 of $\bA_{K,\la}$ is equivalent modulo $\Ga$ to an element of 
$$
\Om=\prod_{\fp\in\cS_{\infty}}K_{\fp}\times\prod_{p\in\cP}\prod_{\fp\in\cS_{\la},\,\fp\vert p}\bZ_p
[{\bf u}],
$$
so that $\cF\subseteq\Om$. Moreover, by Lemma \ref{l:incl}, $\bZ[{\bf u}]\subseteq\ker\phi$ and hence
\bbe\label{eq:funddom}
\cF=[0,1)^n\times\prod_{p\in\cP}\prod_{\fp\in\cS_{\la},\,\fp\vert p}\bZ_p
[{\bf u}],
\ee
where $\prod_{\fp\in\cS_{\infty}}K_{\fp}$ is considered as an $n$-dimensional $\bR$-vector space with respect to the basis
$\overset{\infty}{u_1},\ldots,\overset{\infty}{u_n}$.  
Since $\cF$ has an interior, $\Ga$ is discrete. 

\section{Toroidal solenoids}\label{s:toroidal}
In this section, 
we apply our results concerning groups $G_A$ and their endomorphisms to the case of toroidal solenoids. 
Let $\bT^n$ denote a torus considered as a quotient of  
$\bR^n$ by its subgroup $\bZ^n$. A matrix $A\in\operatorname{M}_n(\bZ)$ induces a map $A:\bT^n\rar \bT^n$,
$A\left([{\bf x}]\right)=[A{\bf x}]$, $[{\bf x}]\in\bT^n$, ${\bf x}\in\bR^n$. Consider the inverse system $(M_j,f_j)_{j\in\bN}$, where $f_j:M_{j+1}\rar M_{j}$,  
$M_j=\bT^n$ and $f_j=A$ for all $j\in\bN$.
The inverse limit ${\bS}_A$ of the system is called a  ({\em toroidal}) {\em solenoid}. As a set, $\bS_A$ is a subset of 
$\prod_{j=1}^{\infty} M_j$ consisting of points $(z_j)\in\prod_{j=1}^{\infty} M_j$ such that $z_j\in M_j$ and $f_j(z_{j+1})=z_{j}$ for $\forall j\in\bN$, {\it i.e.},
\bbe\label{eq:solen}
\bS_A=\left\{(z_j)\in\prod_{j=1}^{\infty}\bT^n\,\,\Big\vert\,\,z_j\in\bT^n,\,\, A(z_{j+1})=z_j,\,\,j\in\bN \right\}.
\ee
Endowed with the natural group structure and the induced topology from the Tychonoff (product) topology on $\prod_{j=1}^{\infty} \bT^n$, $\bS_A$ is an  
$n$-dimensional topological abelian group. It is compact, metrizable, and connected, but not locally connected and not path connected. The map $\sg:\bS_A\to\bS_A$ induced by multiplication by $A$, $(z_j)\mapsto (A(z_j))$ is an automorphism of $\bS_A$ as a topological group, and the pair $(\bS_A,\sg)$ is a dynamical system. 
The endomorphism ring $\End(\bS_A,\sg)$ of the dynamical system $(\bS_A,\sg)$ consists of 
endomorphisms of $\bS_A$ as a topological group that commute with $\sg$. It is known that 
$\widehat{G_{A^t}}\cong\bS_A$. Indeed, $G_{A^t}$ is a direct limit of groups $(A^t)^{-j}\bZ^n$, $j\in\bN\cup\{0\}$. Here, each $(A^t)^{-j}\bZ^n$ is isomorphic to $\bZ^n$ and the maps $(A^t)^{-j}\bZ^n\to (A^t)^{-(j+1)}\bZ^n$ are inclusions. Applying the functor $\Hom_{\bZ}(-,\bT^1)$ to the system, we obtain the inverse limit of groups
$$\Hom_{\bZ}((A^t)^{-j}\bZ^n,\bT^1)\cong \Hom_{\bZ}(\bZ^n,\bT^1)\cong\bT^n$$
with the maps $f_j$ as above that defines $\bS_A$. This gives an isomorphism of topological groups $\widehat{G_{A^t}}\cong\bS_A$, where $G_{A^t}$ is endowed with the discrete topology and $\widehat{G_{A^t}}$ is endowed with the compact-open topology. Moreover, since $G_{A^t}$ is a locally compact abelian group, it follows from the Pontryagin duality theorem that the map between the rings $\End(G_{A^t})$ and $\End(\widehat{G_{A^t}})$ given by 
$\phi\mapsto (\chi\mapsto \chi\circ\phi)$, $\phi\in \End(G_{A^t})$, $\chi\in\widehat{G_{A^t}}$, 
 is a ring isomorphism between the opposite ring $\End(G_{A^t})^{op}$ of $\End(G_{A^t})$ and 
 $\End(\widehat{G_{A^t}})$. Thus, we have a ring isomorphism $\End(\bS_A)\cong\End(G_{A^t})^{op}$, under which 
 $\sg$ corresponds to multiplication by $A^t$ on $G_{A^t}$. Therefore, $\End(\bS_A,\sg)$ is isomorphic to the subring of $\End(G_{A^t})^{op}$ consisting of $T\in\M_n(\bQ)\cap \End(G_{A^t})$ that commute with $A^t$. By Corollary \ref{cor:abel}, under the assumptions of Proposition \ref{pr:end}, $\End(G_{A^t})$ is commutative and, in particular, every endomorphism of $G_{A^t}$ commutes with $A^t$. This implies that every endomorphism of $\bS_A$ commutes with 
 $\sg$ and hence is an endomorphism of the dynamical system $(\bS_A,\sg)$. 
 Thus, we have the following

\begin{prop}\label{pr:endtor} 
Assume $A\in\M_n(\bZ)$ is non-singular with an irreducible characteristic polynomial, $\cP'\ne\emptyset$, and  
 there exists a prime $p\in\bN$ with 
$(n,t_p)=1$. Then $\End(\bS_A)$ is a commutative ring and, in particular, $\End(\bS_A,\sg)=\End(\bS_A)$.
\end{prop}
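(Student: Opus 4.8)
The plan is to move the whole question to the character group of $G_{A^t}$, where commutativity is available from Corollary \ref{cor:abel}, and then transport the conclusion back through Pontryagin duality. First I would invoke the identification $\bS_A\cong\widehat{G_{A^t}}$ of topological groups recalled above, together with its functorial consequence: the assignment $\phi\mapsto(\chi\mapsto\chi\circ\phi)$ is a ring isomorphism $\End(G_{A^t})^{op}\cong\End(\widehat{G_{A^t}})=\End(\bS_A)$, under which the solenoid automorphism $\sg$ corresponds to multiplication by $A^t$ on $G_{A^t}$. Here one uses that $G_{A^t}$, being discrete and hence locally compact, satisfies Pontryagin duality, so continuous endomorphisms of $\bS_A$ correspond bijectively to group endomorphisms of $G_{A^t}$.

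Next I would verify that the hypotheses of Proposition \ref{pr:end} are invariant under transpose, so that Corollary \ref{cor:abel} applies to $A^t$. This is immediate: $h_{A^t}=h_A$, so $A^t$ again has irreducible characteristic polynomial; $\det A^t=\det A$, so $\cP'(A^t)=\cP'(A)\ne\emptyset$; and for every prime $p$ the integer $t_p$ depends only on the reduction of $h_A$ modulo $p$, whence $t_p(A^t)=t_p(A)$ and the assumption that $(n,t_p)=1$ for some $p$ persists. Corollary \ref{cor:abel} then gives that $\End(G_{A^t})$ is a commutative ring. Consequently $\End(G_{A^t})^{op}=\End(G_{A^t})$, and by the isomorphism of the previous paragraph $\End(\bS_A)$ is commutative.

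For the last assertion, $\End(\bS_A,\sg)$ is by definition the subring of $\End(\bS_A)$ consisting of topological-group endomorphisms commuting with $\sg$, which under the above anti-isomorphism corresponds to the set of $T\in\End(G_{A^t})$ commuting with $A^t$. Since multiplication by $A^t$ belongs to the now commutative ring $\End(G_{A^t})$, every element commutes with it, so this subring is all of $\End(G_{A^t})$; hence $\End(\bS_A,\sg)=\End(\bS_A)$. There is no genuinely hard step once Corollary \ref{cor:abel} is in hand; the only point requiring care is the (routine) observation that the hypotheses of Proposition \ref{pr:end} involve only data attached to $h_A$ and are therefore unchanged on passing to the transpose.
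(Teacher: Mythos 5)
Your proposal is correct and follows essentially the same route as the paper: identify $\End(\bS_A)$ with $\End(G_{A^t})^{op}$ via Pontryagin duality (with $\sg$ corresponding to multiplication by $A^t$), apply Corollary \ref{cor:abel} to conclude commutativity, and deduce $\End(\bS_A,\sg)=\End(\bS_A)$ since every element of a commutative ring commutes with the image of $A^t$. Your explicit check that the hypotheses of Proposition \ref{pr:end} are unchanged under transposition ($h_{A^t}=h_A$, $\cP'(A^t)=\cP'(A)$, $t_p(A^t)=t_p(A)$) is a point the paper leaves implicit, but it is the same argument.
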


\subsection{$\cS$-integer dynamical systems}
In \cite{cew}, the authors introduce the so-called $\cS$-integer dynamical system $(X,\al)$, a dual object to 
the group of $\cS$-integers $\cO_{K,\cS}$ in a number field $K$ and an element $\xi\in \cO_{K,\cS}$. 
%where the underlying topological space $X$ is a compact abelian group, the dual group to the group of $\cS$-integers in a number field. 
Groups of the form $G_A$ arise topologically as character groups of toroidal solenoids. Thanks to the description of endomorphisms of $G_A$ in Proposition \ref{pr:end} and the description of $G_A$ as a subset of a number field in Lemma \ref{l:image}, one can see a connection between toroidal solenoids and $\cS$-integer dynamical systems. 
%and, possibly, to define a more general $\cS$-integer dynamical system that includes both solenoids and $\cS$-integer dynamical systems. 

%\subsection{$S$-integer dynamical systems}
\begin{defin}\label{def:sin}
Let $K$ be a number field and let $\cS$ be a set of prime ideals of the ring of integers $\cO_K$ of $K$. Let $\xi\in K$, $\xi\ne 0$, $\xi\in\cO_{K,\cS}$, where $\cO_{K,\cS}$ is the ring of $\cS$-integers of $K$ defined as
$$
\cO_{K,\cS}=\{x\in K\,\vert\, \val_{\fp}(x)\geq 0\text{ for any prime ideal }\fp\text{ of }\cO_K\text{ not in }\cS\}.
$$
Let $X\cong\widehat{\cO_{K,\cS}}$ be the (Pontryagin) dual to the discrete (countable) group $\cO_{K,\cS}$ and 
let $\al:X\to X$ be a continuous group endomorphism, the dual to the monomorphism $\hat\al:\cO_{K,\cS}\to \cO_{K,\cS}$ given by 
 $\hat\al(x)=\xi x$. A pair $(X,\al)=(X^{(K,\cS)},\al^{(K,\cS,\xi)})$ is called
 an $\cS$-{\em integer dynamical system} \cite{cew}.
\end{defin}

%\subsection{Toroidal solenoids as $\cS$-integer dynamical systems} 
Assume $A\in\M_n(\bZ)$ is non-singular with an irreducible characteristic polynomial %$h_A\in\bZ[x]$ 
and let $\la\in\overline{\bQ}$ be an eigenvalue of $A$, $K=\bQ(\la)$. It is known that $\bS_A\cong \widehat{G_{A^t}}$ as topological groups, where $G_{A^t}$ is endowed with the discrete topology. By \eqref{eq:diagram}, we know that 
$G_{A^t}\cong Y_{A}({\bf u},\la)$, where ${\bf u}=\left(\begin{matrix} u_1 & \ldots & u_n\end{matrix} \right)\in(\cO_K)^n$ is an eigenvector of $A$ corresponding to $\la$, and 
$Y_{A}({\bf u},\la)$ is a $\bZ[\la^{\pm 1}]$-submodule of $\cO_{K,\la}$ given by
\bbe\label{eq:ya1}
Y_{A}({\bf u},\la)=\{m_1\la^{k_1}u_1+\cdots +m_n\la^{k_n}u_n\,\vert\,m_1,\ldots,m_n,k_1,\ldots,k_n\in\bZ \}.
\ee
%
%We now describe a group isomorphism of 
%$G_{A^t}$ onto a $\bZ[\la^{\pm 1}]$-submodule of $\cO_{K,{\la}}$:
%$$
%G_{A^t}\hrar \cO_{K,{\la}}.
%$$
This implies that $\bS_A\cong \widehat{Y_{A}({\bf u},\la)}$,  a description of $\bS_A$ in the spirit of Definition \ref{def:sin}.
Assume in addition that $\cP'\ne\emptyset$ and that there exists a prime $p\in\bN$ with 
$(n,t_p)=1$. Then %it follows from the proof of Proposition  \ref{pr:end} that 
under 
the isomorphism 
$G_{A^t}\cong Y_{A}({\bf u},\la)$ given by \eqref{eq:mu} applied to $A^t$, an endomorphism $\al$ of $\bS_A$ is dual to an endomorphism $\hat\al(y)=xy$, $y\in Y_{A}({\bf u},\la)$, where $x=\imath(T)$, $T\in\End(G_{A^t})$, via \eqref{eq:tx} and the proof of Proposition  \ref{pr:end}. Recall that $\cO_{K,\la}=\cO_K[\la^{-1}]$. It is a ring of $\cS$-integers with $\cS$ consisting of prime ideals of $\cO_K$ dividing $\la$. Thus, in general, $\bS_A$ is not an $\cS$-integer dynamical system, since it corresponds to a {\it subring} $Y_{A}({\bf u},\la)$ of a ring of $\cS$-integers $\cO_{K,\la}$. However, we show below that  similar results can be proved regarding toroidal solenoids. 
This suggests that there might be a more general object that encompasses both $\cS$-integer dynamical systems and toroidal solenoids corresponding to matrices with irreducible characteristic polynomials satisfying the condition $(n,t_p)=1$.

%We thus proved the following:
\begin{thm}\label{th:sintsol}
Assume $A\in\M_n(\bZ)$ is non-singular with an irreducible characteristic polynomial %$h_A\in\bZ[x]$ 
and $\cP'\ne\emptyset$. 
Assume, in addition, that there exists a prime $p\in\bN$ with 
$(n,t_p)=1$. Let $\la\in\overline{\bQ}$ be an eigenvalue of $A$. Then $\bS_{A}$ is isomorphic to $\widehat{Y}$, where $Y$ is a $\bZ[\la^{\pm 1}]$-submodule of $\overline{\bQ}$ generated by $u_1,\ldots, u_n$, where
${\bf u}=\left(\begin{matrix} u_1 & \ldots & u_n \end{matrix}\right)\in(\overline{\bQ})^n$ is an eigenvector of $A$ corresponding to $\la$, and $Y$ is endowed with the discreet topology. If $K=\bQ(\la)$ and ${\bf u}\in(\cO_K)^n$, then
$Y\subseteq\cO_{K,{\la}}$, and
$$
\Span_{\bZ}(u_1,\ldots, u_n)\subseteq Y\subseteq\Span_{\caR}(u_1,\ldots, u_n),
$$
where $\caR=\bZ\left[ \frac{1}{\det A}\right]$. Moreover, under the isomorphism, each endomorphism 
$\al$ of $\bS_A$ is dual to an endomorphism $\hat\al(y)=xy$, $y\in Y$, $x\in\cO_{K,{\la}}$, of $Y$. 
\end{thm}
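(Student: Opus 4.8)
The statement is a synthesis of material already assembled in Sections~\ref{ss:nf}--\ref{s:toroidal}, so the plan is mostly to put the pieces together and carry out one explicit verification for the last sentence. Set $Y=Y_A({\bf u},\la)$ as in \eqref{eq:ya1}, where ${\bf u}$ is the chosen eigenvector of $A$ for $\la$. The first step is to identify $Y$ as the $\bZ[\la^{\pm 1}]$-submodule of $\overline{\bQ}$ generated by $u_1,\ldots,u_n$: since $A$ has integer entries, $\la u_i=(A{\bf u})_i\in\bZ[{\bf u}]=\Span_{\bZ}(u_1,\ldots,u_n)$, so $\bZ[{\bf u}]$ is stable under multiplication by $\la$, and therefore $Y=\bigcup_{k\ge 0}\la^{-k}\bZ[{\bf u}]$, which is exactly the $\bZ[\la^{\pm 1}]$-module generated by $u_1,\dots,u_n$. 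Because $h_A$ is irreducible the $\la$-eigenspace is one-dimensional, so a different eigenvector rescales $Y$ by an element of $\overline{\bQ}^{\times}$ and yields an isomorphic module, hence an isomorphic dual. Combining $\bS_A\cong\widehat{G_{A^t}}$ (Section~\ref{s:toroidal}) with $G_{A^t}\cong Y_A({\bf u},\la)$ from diagram \eqref{eq:diagram} (Lemma~\ref{l:image} applied to $A^t$) gives $\bS_A\cong\widehat{Y}$ with $Y$ discrete.

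Next I would record the inclusions under the hypothesis ${\bf u}\in(\cO_K)^n$. The containment $\Span_{\bZ}(u_1,\ldots,u_n)=\bZ[{\bf u}]\subseteq Y$ is immediate (all exponents zero), and $Y\subseteq\cO_{K,\la}$ follows because each generator $\la^k u_i$ lies in $\cO_K$ for $k\ge 0$ and in $\cO_K[\la^{-1}]=\cO_{K,\la}$ for $k<0$, using \eqref{eq:Sint}. For the upper bound $Y\subseteq\Span_{\caR}(u_1,\ldots,u_n)=\caR[{\bf u}]$: from $A{\bf u}=\la{\bf u}$ one gets $\la^{-1}{\bf u}=A^{-1}{\bf u}$, and $A^{-1}=(\det A)^{-1}\mathrm{adj}(A)\in\M_n(\caR)$, so $\la^{-1}u_i\in\caR[{\bf u}]$; inductively $\la^{-k}u_i\in\caR[{\bf u}]$, while $\la^k u_i=(A^k{\bf u})_i\in\bZ[{\bf u}]$ for $k\ge 0$. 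Hence every generator of $Y$ lies in $\caR[{\bf u}]$, giving $\Span_{\bZ}(u_1,\ldots,u_n)\subseteq Y\subseteq\Span_{\caR}(u_1,\ldots,u_n)$ and, in particular, $Y\subseteq\cO_{K,\la}$.

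For the endomorphism statement I would invoke $\cP'\ne\emptyset$ and the existence of a prime $p$ with $(n,t_p)=1$ to apply Proposition~\ref{pr:end} to $A^t$, which has the same characteristic polynomial as $A$ (hence the same eigenvalue $\la$, the same field $K$, and the same $t_p$); this yields a ring embedding $\imath=\imath(A^t,\la):\End(G_{A^t})\hrar\cO_{K,\la}$. By the Pontryagin-duality discussion in Section~\ref{s:toroidal}, $\End(\bS_A)\cong\End(G_{A^t})^{op}$, so an endomorphism $\al$ of $\bS_A$ is dual to some $T\in\End(G_{A^t})$. It then remains to check that under the isomorphism $\mu=\mu_{A^t}\colon G_{A^t}\eqv Y$ (the projection along the eigenvector, as in \eqref{eq:mu}) the map $T$ becomes multiplication by $x:=\imath(T)\in\cO_{K,\la}$. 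Writing $T=MXM^{-1}$ with $M=\left(\begin{matrix}\sg_1({\bf u}) & \ldots & \sg_n({\bf u})\end{matrix}\right)$ and $X=\diag\left(\begin{matrix}\sg_1(x) & \ldots & \sg_n(x)\end{matrix}\right)$ as in \eqref{eq:tx} applied to $A^t$, a vector ${\bf z}\in G_{A^t}$ with $\mu({\bf z})=z_1$ satisfies $M^{-1}{\bf z}=\left(\begin{matrix}z_1 & \sg_2(z_1) & \ldots & \sg_n(z_1)\end{matrix}\right)^t$, whence $XM^{-1}{\bf z}=\left(\begin{matrix}\sg_1(xz_1) & \ldots & \sg_n(xz_1)\end{matrix}\right)^t$ and $T{\bf z}=\sum_i\sg_i\!\big((xz_1){\bf u}\big)$, i.e. $\mu(T{\bf z})=x\,\mu({\bf z})$. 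Thus $\hat\al(y)=xy$ for $y\in Y$ with $x\in\cO_{K,\la}$, as claimed.

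The proof has no genuinely hard step; the only points demanding care are the $A$ versus $A^t$ bookkeeping (the module $Y$ is built from an eigenvector of $A$, while the scalar $x$ acting on it is extracted from Proposition~\ref{pr:end} applied to $A^t$) and the short matrix computation showing that $\mu$ intertwines $T$ with multiplication by $\imath(T)$; everything else is a direct appeal to Lemma~\ref{l:image}, diagram~\eqref{eq:diagram}, Proposition~\ref{pr:end}, and the duality isomorphism $\End(\bS_A)\cong\End(G_{A^t})^{op}$.
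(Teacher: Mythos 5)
Your proposal is correct and takes essentially the same route as the paper, whose proof of Theorem \ref{th:sintsol} is exactly the two paragraphs preceding it: $\bS_A\cong\widehat{G_{A^t}}$, then $G_{A^t}\cong Y_A({\bf u},\la)$ via Lemma \ref{l:image} and diagram \eqref{eq:diagram}, and the endomorphism statement from Proposition \ref{pr:end} applied to $A^t$ together with $\End(\bS_A)\cong\End(G_{A^t})^{op}$; your explicit verification of the inclusions and of the intertwining $\mu(T{\bf z})=x\,\mu({\bf z})$ just spells out what the paper leaves implicit. The only point to watch is that in that last computation the vector inside $M$ must be read as the eigenvector of $A^t$ (the one defining $\mu_{A^t}$ and $\imath(A^t,\la)$), not the eigenvector of $A$ generating $Y$; the rescaling remark you already made covers the resulting ambiguity, so the argument stands.
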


\begin{cor}
Assume $A\in\M_n(\bZ)$ is non-singular with an irreducible characteristic polynomial %$h_A\in\bZ[x]$ 
and $\cP'\ne\emptyset$. 
Assume, in addition, that there exists a prime $p\in\bN$ with 
$(n,t_p)=1$.  If there exist an eigenvalue $\la\in\overline{\bQ}$ with a corresponding eigenvector 
${\bf u}=\left(\begin{matrix} u_1 & \ldots & u_n \end{matrix}\right)\in(\cO_K)^n$ of $A$, $K=\bQ(\la)$, such that 
$\Span_{\bZ}(u_1,\ldots, u_n)=\bZ[\la]=\cO_K$, then 
$(\bS_A,\al)$, $\al\in\End(\bS_A)$, is an $\cS_{\la}$-integer dynamical system with $\cS_{\la}$ consisting of prime ideals of $\cO_K$ dividing $\la$.
% if and only if 
%$\cO_K=\Span_{\bZ}(u_1,\ldots, u_n)$ and $\cO_K=\bZ[\la]$ with $S=\cS(\la)$, where $\left(\begin{matrix} u_1 & \ldots & u_n \end{matrix}\right)\in\cO_K^n$ is an eigenvector of $A$ corresponding to an eigenvalue $\la$ of $A$. 
\end{cor}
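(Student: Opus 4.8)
The plan is to derive the corollary directly from Theorem~\ref{th:sintsol} together with Definition~\ref{def:sin}. By Theorem~\ref{th:sintsol}, under the stated hypotheses $\bS_A\cong\widehat{Y}$ where $Y$ is the $\bZ[\la^{\pm 1}]$-submodule of $\cO_{K,\la}$ generated by $u_1,\ldots,u_n$, and every endomorphism $\al$ of $\bS_A$ is dual to multiplication by some $x\in\cO_{K,\la}$ on $Y$. So it suffices to identify $Y$ with $\cO_{K,\la}=\cO_K[\la^{-1}]$ under the extra hypothesis $\Span_{\bZ}(u_1,\ldots,u_n)=\bZ[\la]=\cO_K$, and then observe that the pair $(\bS_A,\al)$ matches Definition~\ref{def:sin} verbatim with $(K,\cS_\la,x)$.

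First I would compute $Y$. By definition $Y=\{\sum_i m_i\la^{k_i}u_i\mid m_i,k_i\in\bZ\}$, the $\bZ[\la^{\pm1}]$-module generated by $u_1,\ldots,u_n$. Since $\Span_{\bZ}(u_1,\ldots,u_n)=\cO_K$ by hypothesis, extending scalars to $\bZ[\la^{\pm1}]$ gives $Y=\bZ[\la^{\pm1}]\cdot\cO_K$. But $\bZ[\la^{\pm1}]\cdot\cO_K=\cO_K[\la^{-1}]$: the inclusion $\subseteq$ is clear since $\la\in\cO_K$, and for $\supseteq$ any element of $\cO_K[\la^{-1}]$ has the form $a\la^{-k}$ with $a\in\cO_K$, $k\geq0$, which lies in $\bZ[\la^{\pm1}]\cdot\cO_K$. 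Hence $Y=\cO_K[\la^{-1}]=\cO_{K,\la}$, the full ring of $\cS_\la$-integers, where $\cS_\la$ is the set of prime ideals of $\cO_K$ dividing $\la$ (using the identity $\cO_{K,\la}=\cO_K[\la^{-1}]$ from \eqref{eq:Sint}).

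Next I would match the dynamical data. By Theorem~\ref{th:sintsol}, $\bS_A\cong\widehat{Y}=\widehat{\cO_{K,\la}}$ as topological groups, where $\cO_{K,\la}$ carries the discrete topology; this is exactly the space $X\cong\widehat{\cO_{K,\cS}}$ of Definition~\ref{def:sin} with $\cS=\cS_\la$. Given $\al\in\End(\bS_A)$, Theorem~\ref{th:sintsol} furnishes $x\in\cO_{K,\la}$ with $\al$ dual to $\hat\al(y)=xy$ on $Y=\cO_{K,\la}$; since $x\ne0$ when $\al\ne0$ (and the zero endomorphism is excluded or handled trivially), $\hat\al$ is the monomorphism $y\mapsto\xi y$ with $\xi=x\in\cO_{K,\la}$ appearing in Definition~\ref{def:sin}. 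Therefore $(\bS_A,\al)=(X^{(K,\cS_\la)},\al^{(K,\cS_\la,x)})$ is an $\cS_\la$-integer dynamical system, as claimed.

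I do not anticipate a genuine obstacle here: the corollary is essentially a specialization of Theorem~\ref{th:sintsol}, and the only point requiring a line of argument is the identity $\bZ[\la^{\pm1}]\cdot\cO_K=\cO_K[\la^{-1}]$, which is elementary. The one thing to be slightly careful about is the degenerate case $\al=0$ (or $x$ a non-unit causing $\hat\al$ to fail to be injective on all of $\cO_{K,\la}$): $\hat\al(y)=xy$ is injective precisely because $K$ is a field and $x\ne0$, so for any nonzero $\al$ the hypotheses of Definition~\ref{def:sin} are met, and one may either exclude $\al=0$ or note it corresponds to the trivial (constant) system.
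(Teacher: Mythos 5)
Your proposal is correct and follows exactly the route the paper intends: the corollary is a direct specialization of Theorem~\ref{th:sintsol}, with the only substantive step being the identification $Y=\bZ[\la^{\pm1}]\cdot\Span_{\bZ}(u_1,\ldots,u_n)=\cO_K[\la^{-1}]=\cO_{K,\la}$ under the hypothesis $\Span_{\bZ}(u_1,\ldots,u_n)=\bZ[\la]=\cO_K$, after which the data match Definition~\ref{def:sin} with $\cS=\cS_\la$. Your extra care about $\al=0$ and injectivity of $\hat\al$ is a fine (if not strictly required) remark.
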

\subsection{Ergodicity and periodic points of endomorphisms of $\bS_A$}\label{ss:fixed}
We now apply the characterization of $\bS_A$ via Theorem \ref{th:onto} to count numbers of periodic 
points of a continuous endomorphism $T\in\End(\bS_{A})$ of $\bS_A$. 
%Let $T\in\End(\bS_{A})$ and let $\xi=\imath(T)\in\cO_{K,\cS_{\la}}$ (see Proposition $\ref{pr:end}$). 
%Thus, we can apply Proposition \ref{pr:char} to the case of solenoids. The result is the following characterization of
%$\bS_A$. 
\begin{prop}\label{pr:charsol1}
Assume $A\in\M_n(\bZ)$ is non-singular with an irreducible characteristic polynomial. %$h_A\in\bZ[x]$. 
Let $\la\in\overline{\bQ}$ be an eigenvalue of $A$, $K=\bQ(\la)$. Let 
${\bf w}=\left(\begin{matrix} w_1 & \ldots & w_n \end{matrix}\right)\in(\cO_K)^n$ be an eigenvector of $A^t$ corresponding to $\la$.  
We have an isomorphism
%\widehat{G_A}\cong \bA_{K,\la}/Y_{A}({\bf u},\la).
%Let ${\bf u}\in\cO_K^n$ be an eigenvector of $A$ corresponding to $\la$. Then $D_p(A)\cong \bZ_p[{\bf u}]$.  
$$
{\bS_A}\cong \bA_{K,\la}/Y_{A^t}({\bf w},\la),
%\cong \left(\prod_{\fp\, \in\,\cS_{\la}\cup\,\cS_{\infty}} K_{\fp}\right)/Y,
$$
where 
%$\cS_{\la}$ consists of all prime ideals of $\cO_K$ dividing $\la$, %$\cS_{\infty}$ consists of all infinite places of $K$, 
%Moreover, the isomorphism depends on the choice of an eigenvector 
% and
% is induced by the following 
%%a surjective 
%homomorphism
%\begin{eqnarray*} 
%&& \varphi:\prod_{\fp\, \in\,\cS_{\la}\cup\,\cS_{\infty}} I_{\fp}(A,\la)\rar\widehat{G_A}, \\
%&& \varphi((\eta_{\fp})_{\fp})({\bf x})=\prod_{\fp\,\in\,\cS_{\infty}}e^{-2\pi i\tr_{K_{\fp}/\bR}(\eta_{\fp}x_1)}\times
%\prod_{\fp\,\in\,\cS_{\la}}e^{2\pi i\{\tr_{K_{\fp}/\bQ_p}(\eta_{\fp}x_1)\}_p},%\quad{\bf x}\in G_A.
%\end{eqnarray*} 
%where ${\bf x}\in G_A$ and $x_1=\mu({\bf x})$ is the projection of ${\bf x}$ along ${\bf u}$ as in $\eqref{eq:mu}$.
$$
Y_{A^t}({\bf w},\la)=\{m_1\la^{k_1}w_1+\cdots +m_n\la^{k_n}w_n\,\vert\,m_1,\ldots,m_n,k_1,\ldots,k_n\in\bZ \},
$$
and $Y_{A^t}({\bf w},\la)$ %, $Y$ are 
is embedded diagonally into $\bA_{K,\la}$. 
\end{prop}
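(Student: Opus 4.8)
The plan is to obtain Proposition \ref{pr:charsol1} by applying Theorem \ref{th:onto} to the transpose matrix $A^t$ and composing with the standard identification $\bS_A \cong \widehat{G_{A^t}}$.

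First I would record the routine facts about $A^t$. Since $\det A^t = \det A \neq 0$, the matrix $A^t$ is non-singular; since $h_{A^t} = h_A$, its characteristic polynomial is the same irreducible polynomial, so $\la$ is an eigenvalue of $A^t$ and $K = \bQ(\la)$ is unchanged. Hence $\cP(A^t) = \cP(A)$, the set $\cS_{\la}$ of prime ideals of $\cO_K$ dividing $\la$ depends only on the pair $(K,\la)$, and therefore the ring $\bA_{K,\la} = \prod_{\fp \in \cS_\infty \cup \cS_{\la}} K_{\fp}$ is literally the same object whether it is built from $A$ or from $A^t$. I would then fix an eigenvector of $A^t$ for $\la$, scaled (as in Section \ref{ss:nf}) so that it lies in $(\cO_K)^n$, and call it ${\bf w}$; this is the vector playing the role of ``${\bf u}$'' in Theorem \ref{th:onto} for the matrix $A^t$.

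Second, I would invoke Theorem \ref{th:onto} with $A$ replaced by $A^t$ and ${\bf u}$ replaced by ${\bf w}$. This produces a surjection $\phi : \bA_{K,\la} \to \widehat{G_{A^t}}$ with $\ker\phi = Y_{A^t}({\bf w},\la)$, the latter regarded as a subgroup of $K$ embedded diagonally in $\bA_{K,\la}$, and hence an isomorphism of topological groups $\widehat{G_{A^t}} \cong \bA_{K,\la}/Y_{A^t}({\bf w},\la)$. Composing with the isomorphism $\bS_A \cong \widehat{G_{A^t}}$ recorded at the start of Section \ref{s:toroidal} (the direct-limit presentation of $G_{A^t}$ together with Pontryagin duality) yields $\bS_A \cong \bA_{K,\la}/Y_{A^t}({\bf w},\la)$ as topological groups, which is the assertion.

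There is essentially no obstacle here beyond this bookkeeping. The one point that deserves an explicit sentence is the identification of ``$\bA_{K,\la}$ for $A$'' with ``$\bA_{K,\la}$ for $A^t$'': it is immediate, since that object is assembled purely from $K$ and from $\cS_\infty \cup \cS_{\la}$, neither of which is affected by transposition. Compatibility of the quotient topology on $\bA_{K,\la}/Y_{A^t}({\bf w},\la)$ with the compact-open topology on $\widehat{G_{A^t}}$ is already part of the content of Theorem \ref{th:onto}, so nothing further is required on that score.
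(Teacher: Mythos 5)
Your proof is correct and follows exactly the paper's argument: the paper also deduces the proposition by applying Theorem \ref{th:onto} to $A^t$ and combining it with the identification $\bS_A\cong\widehat{G_{A^t}}$ from the beginning of Section \ref{s:toroidal}. Your extra bookkeeping (that $\bA_{K,\la}$ and $K=\bQ(\la)$ are unchanged under transposition) is a harmless elaboration of the same route.
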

\begin{proof}
Since ${\bS_A}\cong \widehat{G_{A^t}}$ (see the $1$st paragraph  of Section \ref{s:toroidal}), the proposition follows from 
Theorem \ref{th:onto} applied to $A^t$. %Also, note that the isomorphism can be described explicitly.
\end{proof}

Let $T\in\End(\bS_{A})$ be a continuous endomorphism  of $\bS_A$. 
By definition, 
the set $F_k(T)$ of points of period $k\geq 1$ of $T$ is 
%$$
%\abs{F_k(T)}=\prod_{\fp\,\in\,\cS_\la\cup\, \cS_{\infty}}\abs{\xi^k-1}_{\fp},
%$$
%$$
%\abs{F_k(T)}=\prod_{\{v\,\vert \det A\}\cup P_{\infty}}\abs{\xi^k-1}_v,
%$$
%where $v$ runs through all the finite places of $K$ $($i.e., prime ideals of the ring of integers of $K$$)$ lying above primes $p\in\bN$ dividing $\det A$ and all the infinite 
%places of $K$,
$$ 
F_k(T)=\{x\in \bS_{A}\,\vert \, T^k(x)=x\}.
$$

\begin{lem}\label{l:erg} Assume $A\in\M_n(\bZ)$ is non-singular with an irreducible characteristic polynomial %$h_A\in\bZ[x]$ 
and $\cP'\ne\emptyset$. 
Assume, in addition, that there exists a prime $p\in\bN$ with 
$(n,t_p)=1$. Then 
$T\in\End(\bS_A)$ is ergodic if and only if  each eigenvalue of $T$ is not a root of unity.
\end{lem}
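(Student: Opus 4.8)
The plan is to reduce the assertion to the classical criterion for ergodicity of a continuous endomorphism of a compact abelian group, and then to an elementary observation about roots of unity in the number field $K=\bQ(\la)$. First I would set aside the degenerate case $T=0$: since $\bS_A$ is a nontrivial compact group, the zero endomorphism does not preserve the Haar measure, so there is nothing to prove there. Assuming $T\neq 0$, I would observe that $h_{A^t}=h_A$ is irreducible, $\cP'(A^t)=\cP'(A)\neq\emptyset$, and $t_p(A^t)=t_p(A)$ for every prime $p$, so the hypotheses of Proposition \ref{pr:end} hold for $A^t$ as well. Combining the ring isomorphism $\End(\bS_A)\cong\End(G_{A^t})^{\mathrm{op}}$ from Section \ref{s:toroidal} with Proposition \ref{pr:end} applied to $A^t$ and the identification $\bS_A\cong\widehat{Y}$ with $Y=Y_A({\bf u},\la)\subseteq\cO_{K,\la}$ discrete (see \eqref{eq:diagram} and Theorem \ref{th:sintsol}), the endomorphism $T$ is dual to the endomorphism $\hat T\colon Y\to Y$, $\hat T(y)=xy$, for a unique nonzero $x\in\cO_{K,\la}$, and the eigenvalues of $T$ are precisely the Galois conjugates $\sg_1(x),\dots,\sg_n(x)$ of $x$, see \eqref{eq:tx}.

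Second, I would note that multiplication by $x\neq 0$ is injective on the subgroup $Y$ of the field $K$, so $\hat T$ is injective; dually, $T\colon\bS_A\to\bS_A$ is then a surjective continuous endomorphism, hence preserves the normalized Haar measure of $\bS_A$. Now I would invoke the dual form of Halmos's ergodicity criterion (a surjective continuous endomorphism of a compact metrizable abelian group is ergodic if and only if its dual endomorphism has no nontrivial periodic point; this is the mechanism used in \cite{cew}): $T$ is ergodic if and only if there is no $y\in Y\setminus\{0\}$ and $k\geq 1$ with $x^{k}y=y$. Since a nonzero $y$ can be cancelled in $K$, this is equivalent to $x^{k}\neq 1$ for all $k\geq 1$, i.e.\ to $x$ not being a root of unity.

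Finally I would close the loop on eigenvalues: a Galois conjugate of a root of unity is again a root of unity, and conversely if $\sg_i(x)^{m}=1$ for some $m$ then $x^{m}=1$; hence $x$ is a root of unity if and only if every eigenvalue $\sg_i(x)$ of $T$ is a root of unity. Feeding this back into the previous step yields that $T$ is ergodic exactly when no eigenvalue of $T$ is a root of unity, as claimed.

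The only genuinely non-formal ingredient is invoking the dual ergodicity criterion in the right generality — it is most often stated for automorphisms, while here $T$ is merely surjective — which is precisely why I would take care to verify injectivity of $\hat T$, equivalently surjectivity of $T$. Everything else is bookkeeping with the identifications already established in Sections \ref{s:irredcharpoly}--\ref{s:toroidal}, together with the trivial fact about roots of unity.
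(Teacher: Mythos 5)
Your proof is correct and takes essentially the same route as the paper, whose entire argument is the citation of \cite[Theorem 4.2]{cew} together with Proposition \ref{pr:end} and Theorem \ref{th:sintsol}: identify $T$ with the dual of multiplication by $x\in\cO_{K,\la}$ on $Y_{A}({\bf u},\la)$, note the eigenvalues of $T$ are the conjugates $\sg_i(x)$, and conclude ergodicity holds iff $x$ is not a root of unity. The only difference is that you unpack the cited criterion via Halmos's characterization (no nontrivial finite orbit of the dual map), which has the mild virtue of working directly on the submodule $Y_{A}({\bf u},\la)\subseteq\cO_{K,\la}$ rather than on the full ring of $\cS$-integers for which \cite{cew} states its theorem.
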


\begin{proof}
It follows from \cite[Theorem 4.2]{cew}, Proposition \ref{pr:end}, and Theorem \ref{th:sintsol}.
\end{proof}

Let $T:\bS_A\rar\bS_A$ be a continuous homomorphism. 
Since $\bS_A\cong\widehat{G_{A^t}}$ as topological groups, $T$ induces a homomorphism (denoted by the same letter by abuse of notation) $T:\widehat{G_{A^t}}\rar \widehat{G_{A^t}}$.  Since
$G_{A^t}$ is locally compact with respect to the discrete topology, we have that  $\widehat{\widehat{G_{A^t}}}\cong G_{A^t}$ and the induced dual $T:G_{A^t}\rar G_{A^t}$ is a homomorphism. By Proposition \ref{pr:end}, 
$\xi=\imath(T)\in\cO_{K,{\la}}$. 
%$T({\bf w})=\xi{\bf w}$ for some $\xi\in\cO_{K,{\la}}$.

\begin{prop}\label{pr:period}
Assume $A\in\M_n(\bZ)$ is non-singular with an irreducible characteristic polynomial %$h_A\in\bZ[x]$ 
and 
$\cP'\ne\emptyset$. 
%Let $K$ be the splitting field of $h_A$. 
Assume, in addition, that there exists a prime $p\in\bN$ with 
$(n,t_p)=1$. % and conditions in Lemma $\ref{l:locdeg}$ hold. 
Let $T\in\End(\bS_{A})$ be an ergodic continuous endomorphism of $\bS_A$ $($see Lemma $\ref{l:erg}$ above$)$. Then the number $\abs{F_k(T)}$ of points of period $k\geq 1$ of $T$ is finite and
$$
\abs{F_k(T)}=\prod_{\fp\,\in\,\cS_\la\cup\, \cS_{\infty}}\abs{\xi^k-1}_{\fp},\quad \xi=\imath(T)\in\cO_{K,{\la}}.
$$
%$$
%\abs{F_k(T)}=\prod_{\{v\,\vert \det A\}\cup P_{\infty}}\abs{\xi^k-1}_v,
%$$
%where $v$ runs through all the finite places of $K$ $($i.e., prime ideals of the ring of integers of $K$$)$ lying above primes $p\in\bN$ dividing $\det A$ and all the infinite 
%places of $K$,
%$$ 
%F_k(T)=\{x\in \bS_{A}\,\vert \, T^k(x)=x\},
%$$
%where $\xi=\imath(T)\in\cO_{K,{\la}}$. % $($see Proposition $\ref{pr:end})$.
\end{prop}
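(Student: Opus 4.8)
The plan is to reduce the counting of periodic points for $T \in \End(\bS_A)$ to the known formula for $\cS$-integer dynamical systems from \cite{cew}, using the identification of $\bS_A$ with $\bA_{K,\la}/Y_{A^t}({\bf w},\la)$ established in Proposition \ref{pr:charsol1} and Theorem \ref{th:onto}. First I would observe that $F_k(T) = F_1(T^k)$, so it suffices to count fixed points of $T^k$; since $T^k = T(\imath(T)^k)$ corresponds to multiplication by $\xi^k$ on $Y_{A^t}({\bf w},\la)$ (using the ring embedding $\imath$ of Proposition \ref{pr:end} and the fact that $\imath$ is a ring homomorphism, so $\imath(T^k) = \xi^k$), it is enough to prove the formula for $k=1$ with $\xi$ replaced by any nonzero non-root-of-unity element of $\cO_{K,\la}$. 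By ergodicity (Lemma \ref{l:erg}) and since $h_A$ is irreducible, no Galois conjugate of $\xi$ is a root of unity, so $\xi^k - 1 \ne 0$ and all the local absolute values $|\xi^k - 1|_{\fp}$ are well-defined and positive.

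Next I would work on the dual side. A point $x \in \bS_A \cong \widehat{Y_{A^t}({\bf w},\la)}$ is fixed by $T$ precisely when the corresponding character $\chi$ satisfies $\chi \circ \hat\al = \chi$, i.e. $\chi((\xi - 1)y) = 1$ for all $y \in Y_{A^t}({\bf w},\la)$, i.e. $\chi$ is trivial on the subgroup $(\xi - 1)Y_{A^t}({\bf w},\la)$. Hence $F_1(T)$ is the Pontryagin dual of the quotient $Y_{A^t}({\bf w},\la)/(\xi-1)Y_{A^t}({\bf w},\la)$, and so $|F_1(T)| = \bigl[Y_{A^t}({\bf w},\la) : (\xi-1)Y_{A^t}({\bf w},\la)\bigr]$ whenever this index is finite (and $F_1(T)$ is infinite otherwise). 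The task therefore becomes: compute this index and show it equals $\prod_{\fp \in \cS_\la \cup \cS_\infty} |\xi - 1|_{\fp}$. Here I would use the adelic picture: $Y_{A^t}({\bf w},\la)$ sits diagonally in $\bA_{K,\la} = \prod_{\fp \in \cS_\infty \cup \cS_\la} K_{\fp}$ as a lattice with compact quotient (the fundamental domain $\cF$ of \eqref{eq:funddom}, applied to $A^t$), and multiplication by $\xi-1$ has adelic module $\prod_{\fp \in \cS_\infty \cup \cS_\la} |\xi-1|_{\fp}$ by the product-formula/Haar-measure scaling computation; this module equals the index $[\Ga : (\xi-1)\Ga]$ of a lattice in its scaled copy, which is exactly $|F_1(T)|$. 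Finiteness follows because $\xi - 1$ is a nonzero element of $\cO_{K,\la}$, hence invertible in $K_{\fp}$ for every $\fp$, so multiplication by $\xi-1$ is an injective endomorphism of the lattice $\Ga$ with image of finite index.

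Alternatively — and this is probably the cleanest route — I would simply cite \cite[Theorem 5.2 or the periodic point formula]{cew} directly: by Theorem \ref{th:sintsol}, $(\bS_A, T)$ is modeled by the pair $(\widehat{Y}, \hat\al)$ with $\hat\al$ = multiplication by $\xi \in \cO_{K,\la}$ on the $\bZ[\la^{\pm1}]$-module $Y$ with $\Span_\bZ({\bf u}) \subseteq Y \subseteq \Span_\caR({\bf u})$; the argument of \cite{cew} counting periodic points of an $\cS$-integer system depends only on the local structure at the places in $\cS_\la \cup \cS_\infty$ and on the lattice $Y$ having compact quotient in $\bA_{K,\la}$, both of which hold here, so the formula $|F_k(T)| = \prod_{\fp \in \cS_\la \cup \cS_\infty} |\xi^k - 1|_{\fp}$ carries over verbatim. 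The main obstacle I anticipate is precisely this last point: in \cite{cew} the relevant module is the full ring $\cO_{K,\cS}$, whereas our $Y = Y_{A^t}({\bf w},\la)$ is only a finitely-generated $\bZ[\la^{\pm1}]$-submodule of $\cO_{K,\la}$ (not a fractional ideal in general), so I must check that the periodic-point count is insensitive to replacing $\cO_{K,\la}$ by $Y$ — this should follow because the adelic module of multiplication by $\xi^k - 1$ on any full lattice in $\bA_{K,\la}$ is the same $\prod |\xi^k-1|_\fp$, independent of which lattice one picks, but it requires verifying that $Y$ indeed spans $\bA_{K,\la}$ over $\bR \times \prod \bZ_p$ (which it does, since $\Span_\bZ({\bf u})$ already spans $K$ over $\bQ$) and that the quotient $\bA_{K,\la}/Y$ is compact.
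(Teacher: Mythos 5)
Your proposal is correct and takes essentially the same route as the paper: both rest on the adelic description of $\bS_A$ from Proposition \ref{pr:charsol1}/Theorem \ref{th:onto}, the identification of $T$ with multiplication by $\xi=\imath(T)$ via Proposition \ref{pr:end}, and the Haar-measure (module) computation of \cite[Lemma 5.1]{cew} --- the paper counts $\ker(T^k-\id)$ directly on the compact group as $\mu((T^k-\id)\cF)/\mu(\cF)$ using the fundamental domain \eqref{eq:funddom}, while you compute the equivalent index $[Y:(\xi^k-1)Y]$ on the discrete dual side, and your closing lattice-independence remark is exactly the reason the paper adapts the lemma rather than citing the $\cS$-integer theorem verbatim. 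The only slip is notational: the discrete dual of $\bS_A$ is $G_{A^t}\cong Y_{A}({\bf u},\la)$, whereas $Y_{A^t}({\bf w},\la)$ is the kernel lattice in $\bA_{K,\la}$; since the module of multiplication by $\xi^k-1$ is the same on any cocompact lattice stable under it, this does not affect the count.
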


\begin{proof} We adapt \cite[Lemma 5.1]{cew} and \cite[Lemma 5.2]{cew} to our case. 
%Recall that $\bS_A\cong\widehat{G_{A^t}}$ as topological groups. 
By Proposition \ref{pr:charsol1}, Theorem \ref{th:onto} and \eqref{eq:funddom},
the fundamental domain 
of the action of $\Ga$ on $\bA_{K,\la}$ applied to $\widehat{G_{A^t}}$, has the form 
$$
\cF=[0,1)^n\times\prod_{p\in\cP}\prod_{\fp\in\cS_{\la},\,\fp\vert p}\bZ_p[{\bf w}],
$$
where ${\bf w}\in(\cO_K)^n$ is an eigenvector of $A^t$ corresponding to an eigenvalue $\la$.  
Let $T\in\End(G_{A^t})$ be the induced dual of $T$ (see the paragraph above the statement of the proposition).
%Since $\bS_A\cong\widehat{G_{A^t}}$ as topological groups and $T:\bS_A\rar\bS_A$ is a continuous homomorphism, $T$ induces a homomorphism (denoted by the same letter by abuse of notation) $T:\widehat{G_{A^t}}\rar \widehat{G_{A^t}}$.  Since
%$G_{A^t}$ is locally compact with respect to the discrete topology, we have that  $\widehat{\widehat{G_{A^t}}}\cong G_{A^t}$ and the induced dual $T:G_{A^t}\rar G_{A^t}$ is a homomorphism. 
By Proposition \ref{pr:end},
$T({\bf w})=\xi{\bf w}$ for some $\xi\in\cO_{K,{\la}}$. 
Thus, the induced action of $T$ on 
$\bA_{K,\la}$ %$\widehat{G_{A^t}}$
is $T(\eta)=\xi\eta$, $\eta\in \bA_{K,\la}$. 
%\widehat{G_A}\cong\left(\prod_{\fp\, \in\,\cS_{\la}\cup\,\cS_{\infty}} I_{\fp}(A,\la)\right)/I_{\bZ}(A,\la)
%\widehat{G_A}\cong\left(\prod_{\fp\, \in\,\cS_{\la}\cup\,\cS_{\infty}} I_{\fp}(A,\la)\right)/I_{\bZ}(A,\la)
%Since $\bS_A\cong\widehat{G_{A^t}}$ as topological groups, $T:\bS_A\rar\bS_A$ is a continuous homomorphism, $T$ induces a homomorphism (denoted by the same letter by abuse of notation) $T:\widehat{G_{A^t}}\rar \widehat{G_{A^t}}$ and since
%$G_{A^t}$ is locally compact with respect to the discrete topology, $\widehat{\widehat{G_{A^t}}}\cong G_{A^t}$, the induced dual $\widehat{T}:G_{A^t}\rar G_{A^t}$ is a homomorphism. By Proposition \ref{pr:end}, if $A=M\La M^{-1}$, where $\La$ is diagonal, then
%$A^t=(M^t)^{-1}\La M^t$ and $\widehat{T}=(M^t)^{-1}X M^t$, where $X$ is diagonal and determined by 
%$\xi\in\cO_{K,\cS}$. By Proposition \ref{pr:mine}, $\widehat{G_{A^t}}\cong\bG_A/G_A$, and the homomorphism 
%from $\bG_A/G_A$ to $\bG_A/G_A$ induced by $T$ has the form ${\bf u}=({\bf u}_{\infty},({\bf u}_p))\in\bG_A$,
%${\bf u}_{\infty}\in\bR^n$, ${\bf u}_p\in \overline{G}_{A,p}$, and 
%$T({\bf u})=(\widehat{T}^t{\bf u}_{\infty},(\widehat{T}^t{\bf u}_p))$ with $\widehat{T}^t=MXM^{-1}$. By abuse of notation, we denote $T=\widehat{T}^t$ when considered as an endomorphism of $\bG_A$, {\it i.e.}, $T({\bf u})=T{\bf u}$ for 
%${\bf u}\in\bG_A/G_A$. The fundamental domain $F$ of $\bG_A$ modulo $G_A$ is 
%$F=[0,1)^n\times\prod_{p\,\vert\det A}\bZ_p^n$ ??? for $\bR^n$. 
Since $\mu(\cF)\ne 0$, by \cite[Lemma 5.1]{cew}, we have
\begin{eqnarray*}
&& \abs{F_k(T)}=\abs{\ker(T^k-\id)}=\mu((T^k-\id)\cF)/\mu(\cF)=\prod_{\fp\,\in\,\cS_\la\cup\, \cS_{\infty}}\abs{\xi^k-1}_{\fp}.
\end{eqnarray*}
\end{proof}

\begin{rem}
Our formula for $\abs{F_k(T)}$ in Proposition \ref{pr:period} is consistent with earlier results from \cite{hl} and \cite{miles}. Recall that $G_A$ is a subgroup of $\bQ^n$ of rank $n$. In \cite{hl}, the authors provide a formula for $\abs{F_k(T)}$, when $T$ is a continuous endomorphism of the dual group $\widehat{G}$ of a subgroup $G$ of $\bQ^2$ of rank $2$. In \cite{miles}, the author presents a more general formula for when $\widehat{G}$ is a finite-dimensional compact abelian group. Our approach differs from both works. In particular, we describe $\bS_A$ using an ad\`ele ring. In \cite{miles}, the formula involves several global fields $K_1, \dots, K_n$, sets of finite places $\cP_i$ in $K_i$, and $\xi_i \in K_i$ \cite[Theorem 1.1]{miles}. In our case of $\widehat{G}=\bS_A$, under the assumptions in Proposition \ref{pr:period}, the number of global fields $n$ is $1$, with $K_1 = \bQ(\lambda)$ and $\cP_1$ consisting of prime ideals of the ring of integers of $K_1$ that do not divide $\lambda$. The correspondence between \cite[Theorem 1.1]{miles} and Proposition \ref{pr:period}  is established through the product formula.
\end{rem}

\begin{example}
In this example, we demonstrate how Proposition \ref{pr:period} can be used to count periodic points of a toroidal solenoid endomorphism when $n>2$. Let $n=3$, let $A\in\M_3(\bZ)$ be a matrix with characteristic polynomial  $h_A=x^3 - x^2 + 2x - 6\in\bZ[x]$. %, and let $\la$ be a root of $h_A$.
One can check that $h_A$ is irreducible, $\det A=6$, $\cP=\cP'=\{2,3\}$, so that the hypotheses on $A$ in Proposition \ref{pr:period} hold. Let $\la$ be a root of $h_A$, $K=\bQ(\la)$. It is known that $K$ is not Galois over $\bQ$, $\cO_K$ is a principal ideal domain, and $\cO_K=\bZ[\la]$ \cite[\href{https://www.lmfdb.org/EllipticCurve/Q/11.a2}{Number field 3.1.808.1}]{db}. Also, the ideals of $\cO_K$ generated by $2$, $3$ have the following prime decompositions: $2\cO_K=\fp_1^2\fp_2$ and $3\cO_K=\fq_1\fq_2$, where $\fp_1,\fp_2,\fq_1,\fq_2$ are prime ideals of $\cO_K$. There is a choice of $\la$ such that $\la\cO_K=\fp_1\fq_2$, $\fp_1=(2-\la)\cO_K$, $\fp_2=(5\la^2+4\la+17)\cO_K$, $\fq_1=(\la^2-\la-1)\cO_K$, $\fq_2=(\la^2+\la+3)\cO_K$ \cite{sage}. By Corollary \ref{cor:integ}, $\End(G_{A^t})\cong\bZ[\la^{\pm 1}]$. We pick an arbitrary element $\xi=(2+\la-\la^2)\la^{-3}\in\bZ[\la^{\pm 1}]$. It defines an endomorphism $\widehat{T}=(2I+A^t-(A^t)^2)(A^t)^{-3}$ 
of $G_{A^t}$, and its dual $T$ is a continuous ergodic endomorphism of $\bS_A$. One can compute $(\xi-1)\cO_K=\fp_1^{-2}\fq_2^{-3}\fa^2$, where 
$\fa$ is a prime ideal of $\cO_K$ above $13$ with the norm $N(\fa)=13$. Since $\cS_{\la}=\{\fp_1,\fq_2\}$, 
by Proposition \ref{pr:period}, the number of fixed points of $T$ is given by
$$
\abs{F_1(T)}=\prod_{\fp\,\in\,\cS_\la\cup\, \cS_{\infty}}\abs{\xi-1}_{\fp}=N(\fa)^2=169.
$$
Similarly, $(\xi^2-1)\cO_K=\fp_1^{-4}\fq_2^{-6}\fa^2\fb$, where 
$\fb$ is a prime ideal of $\cO_K$ above $229$ with the norm $N(\fb)=-229$. By Proposition \ref{pr:period}, the number of points of period $2$ of $T$ is given by
$$
\abs{F_2(T)}=\prod_{\fp\,\in\,\cS_\la\cup\, \cS_{\infty}}\abs{\xi^2-1}_{\fp}=\abs{N(\fa)^2N(\fb)}=169\times 229.
$$
\end{example}

%Even though a toroidal solenoid in general is not an $\cS$-integer dynamical system, but by Proposition \ref{pr:period}
%Proposition \ref{pr:period} is an analogue of \cite[Lemma 5.2]{cew} for toroidal solenoids.
%, we can apply results of \cite{cew} to toroidal solenoids, even though they are not $\cS$-integer dynamical systems in general. 
%Therefore,  in the case when there are finitely many places, the proofs in \cite{cew} that use only \cite[Lemma 5.2]{cew}, are valid for toroidal solenoids as well. 

\begin{cor}
Assume $A\in\M_n(\bZ)$ is non-singular with an irreducible characteristic polynomial %$h_A\in\bZ[x]$ 
and 
$\cP'\ne\emptyset$. 
Assume, in addition, that there exists a prime $p\in\bN$ with 
$(n,t_p)=1$. Let $T\in\End(\bS_{A})$, $\xi=\imath(T)\in\cO_{K,{\la}}$ $($see Proposition $\ref{pr:end})$. Assume 
$\xi$ is not a root of unity. Then the growth rate of the number of periodic points exists and is given by
$$
p^{+}(\xi)=p^{-}(\xi)=h_{top}(\xi).
$$
Here,
$$
p^{+}(\xi)=\lim\sup_{k\to\infty}\frac{1}{k}\log\abs{F_k(\xi)},\quad\quad  p^{-}(\xi)=\lim\inf_{k\to\infty}\frac{1}{k}\log\abs{F_k(\xi)},
$$
and $h_{top}(\xi)$ is the topological entropy of $\xi$.
\end{cor}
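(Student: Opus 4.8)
The plan is to derive the corollary from the explicit count $\abs{F_k(\xi)}=\prod_{\fp\in\cS_{\la}\cup\cS_{\infty}}\abs{\xi^k-1}_{\fp}$ of Proposition \ref{pr:period}, which is available in the present situation: since $\xi$ is not a root of unity, no Galois conjugate $\sg_i(\xi)$ is one either, i.e.\ no eigenvalue of $T$ is a root of unity, so $T$ is ergodic by Lemma \ref{l:erg} and in particular $\xi^k\ne 1$ for all $k\geq 1$, so Proposition \ref{pr:period} applies. Once the formula is in hand, the statement is exactly the growth-rate assertion for the $\cS_{\la}$-integer dynamical system attached to $K=\bQ(\la)$ and $\xi$, with $\cS_{\la}$ the \emph{finite} set of prime ideals of $\cO_K$ dividing $\la$; so I would follow the finite-$\cS$ case of the argument in \cite{cew}.

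Concretely, I would take $\frac1k\log$ of the (finite) product,
$$
\frac1k\log\abs{F_k(\xi)}=\sum_{\fp\in\cS_{\la}\cup\cS_{\infty}}\frac1k\log\abs{\xi^k-1}_{\fp},
$$
and reduce everything to the pointwise statement that, for each place $\fp$ of $K$,
$$
\lim_{k\to\infty}\frac1k\log\abs{\xi^k-1}_{\fp}=\log^{+}\abs{\xi}_{\fp}:=\max\{0,\log\abs{\xi}_{\fp}\}.
$$
Granting this, the limit of $\frac1k\log\abs{F_k(\xi)}$ exists, so $p^{+}(\xi)=p^{-}(\xi)=\sum_{\fp\in\cS_{\la}\cup\cS_{\infty}}\log^{+}\abs{\xi}_{\fp}$. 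Since $\xi\in\cO_{K,{\la}}=\cO_K[\la^{-1}]$ we have $\abs{\xi}_{\fp}\leq 1$ for every prime ideal $\fp\notin\cS_{\la}$, so this sum is the full logarithmic Weil height $\sum_{\fp}\log^{+}\abs{\xi}_{\fp}$ of $\xi$, which by \cite{cew} equals the topological entropy $h_{top}(\xi)$; this gives $p^{+}(\xi)=p^{-}(\xi)=h_{top}(\xi)$.

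The pointwise limit is handled by a case split on $\abs{\xi}_{\fp}$. If $\abs{\xi}_{\fp}>1$ it is immediate: for non-archimedean $\fp$ the ultrametric inequality gives $\abs{\xi^k-1}_{\fp}=\abs{\xi}_{\fp}^{k}$ for all $k$, and for archimedean $\fp$ one has $\abs{\xi^k-1}_{\fp}=\abs{\xi}_{\fp}^{k}(1+o(1))$; if $\abs{\xi}_{\fp}<1$ then $\abs{\xi^k-1}_{\fp}=1$ for all $k$ (non-archimedean) or $\abs{\xi^k-1}_{\fp}\to 1$ (archimedean), and the limit is $0$ in either case. The delicate case is $\abs{\xi}_{\fp}=1$: the bound $\abs{\xi^k-1}_{\fp}\leq 2$ handles the upper inequality, and one needs a lower bound $\abs{\xi^k-1}_{\fp}\geq e^{o(k)}$. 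For non-archimedean $\fp$ this is elementary: with $d$ the order of the image of $\xi$ in $(\cO_{\fp}/\fp)^{\times}$, one has $\val_{\fp}(\xi^k-1)=0$ unless $d\mid k$ and $\val_{\fp}(\xi^k-1)\leq\val_{\fp}(\xi^{d}-1)+\val_p(k)$ otherwise, so $\val_{\fp}(\xi^k-1)=O(\log k)$ and $\abs{\xi^k-1}_{\fp}\geq c\,k^{-C}$. For archimedean $\fp$ the same shape of lower bound $\abs{\xi^k-1}_{\fp}\geq c\,k^{-C}$ holds, but now as a consequence of Baker's theorem on linear forms in (two) logarithms, using that $\sg(\xi)$ lies on the unit circle but $\xi$ is not a root of unity; in both cases $\frac1k\log\abs{\xi^k-1}_{\fp}\to 0=\log^{+}\abs{\xi}_{\fp}$.

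The step I expect to be the main obstacle is precisely this last one: the archimedean place with $\abs{\xi}_{\fp}=1$, which forces an appeal to a Diophantine-approximation input (Baker's theorem) rather than to pure manipulation of the product formula. This case genuinely arises — for instance when $\xi$ is a non-torsion element of norm one in an imaginary quadratic field $K=\bQ(\la)$ — so it cannot be sidestepped. Everything else is routine: the reduction to Proposition \ref{pr:period} via Lemma \ref{l:erg}, the ultrametric and archimedean bookkeeping above, and the identification of the height of $\xi$ with $h_{top}(\xi)$ from \cite{cew}.
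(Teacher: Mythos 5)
Your proposal is correct and follows essentially the same route as the paper, whose proof consists of citing Proposition \ref{pr:period} together with the proof of \cite[Theorem 6.1]{cew}; what you have written out — deducing ergodicity from Lemma \ref{l:erg}, taking $\frac1k\log$ of the product in Proposition \ref{pr:period}, and proving the place-by-place limit $\frac1k\log\abs{\xi^k-1}_{\fp}\to\log^{+}\abs{\xi}_{\fp}$ via elementary valuation bounds at finite places and Baker/Gel'fond lower bounds at archimedean places with $\abs{\xi}_{\fp}=1$, then identifying the resulting height with $h_{top}(\xi)$ — is precisely the content of that cited argument adapted to the finite set $\cS_{\la}\cup\cS_{\infty}$. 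No gaps to report.
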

\begin{proof}
Follows from the proof of \cite[Theorem 6.1]{cew} and Proposition \ref{pr:period}.
\end{proof}
%??? Theorem 7.1 ???

\section{Endomorphisms of $\bZ^n$-odometers} $\bZ^n$-odometer is a dynamical system 
consisting of a topological space $X$ and an action of the group $\bZ^n$ on $X$ (by homeomorphisms). 
%There is a way to construct a 
%$\bZ^n$-odometer out of a  subgroup $H$ of $\bQ^n$ that contains $\bZ^n$ \cite[p.\ 914]{gps}.  
%Namely, the associated odometer $Y_H$ is the Pontryagin dual of the quotient $H/\bZ^n$, {\it i.e.,} $Y_H=\widehat{H/\bZ^n}$. The action of $\bZ^n$ on $Y_H$ is given as follows.  
%Let $\rho$ denote the embedding
%$$
%\rho:H/\bZ^n\hrar \bQ^n/\bZ^n\hrar \bT^n,\quad \bT^n=\bR^n/\bZ^n.
%$$
%Identifying Pontryagin dual $\widehat{\bT^n}$ of $\bT^n$ with $\bZ^n$, we have the induced map
%$$
%\widehat\rho:\bZ^n \rar Y_H=\widehat{H/\bZ^n}.
%$$
%The action of $\bZ^n$ on $Y_H$ is given by $\widehat\rho$. 
%Let $A\in\M_n(\bZ)$ be non-singular. Applying the process to the group $H=G_A$, we get the associated $\bZ^n$-odometer $Y_{G_A}$. For simplicity, we denote $Y_{G_A}$ by $Y_{A}$. 
%Another way to obtain a $\bZ^n$-odometer is to 
Consider a decreasing sequence of finite-index subgroups of $\bZ^n$
$$
G=\bZ^n\supseteq G_1\supseteq G_2\supseteq\cdots
$$
and the natural maps $\pi_i:G/G_{i+1}\rar G/G_{i}$, $i\in\bN$. The associated $\bZ^n$-odometer is  the inverse limit 
\bbe\label{eq:xg}
X=\lim_{\longleftarrow} \left(G/G_{i}\right)
\ee
together with the natural action of $\bZ^n$. 
 For the sequence 
$$
G_i=\left.\left\{A^{i}{\bf x}\,\right\vert\, {\bf x}\in\bZ^n\right\},\quad i\in\bN,
$$
denote by $X_A$ the corresponding odometer. 
%Using duality, one can prove that $X_A$ and $Y_{{A^t}}$ are conjugate  $\bZ^n$-odometers \cite[Theorem 2.6]{gps}.
In \cite{cp}, the authors study the linear representation group of %$\bZ^n$-odometer 
$X_A$ denoted by $\vec{N}(X_A)$. By \cite[Lemma 2.6]{cp}, $\vec{N}(X_A)$ consists of $T\in\GL_n(\bZ)$ ({\it i.e.}, $T\in\M_n(\bZ)$ with $\det T=\pm 1$) such that for any $m\in\bN\cup\{0\}$ there exists $k_m\in\bN\cup\{0\}$ with
\bbe\label{eq:linrepgr}
A^{-m}TA^{k_m}\in\M_n(\bZ).% \operatorname{M}_n(\bZ),
\ee
By taking the transpose of the condition, one can see that it is
 equivalent to the condition that $T^t$ defines an endomorphism of $G_{A^t}$. 
\begin{lem}\label{lem:nvec}
$T\in\vec{N}(X_A)$ if and only if $T^t\in\End(G_{A^t})\cap\GL_n(\bZ)$.
\end{lem}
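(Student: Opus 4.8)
The plan is to translate the matrix condition describing $\vec{N}(X_A)$ from \cite[Lemma 2.6]{cp} into the containment $T^t(G_{A^t})\subseteq G_{A^t}$ by taking transposes. First I would record that, since $A^t\in\M_n(\bZ)$, one has $(A^t)^k\bZ^n\subseteq\bZ^n$ for every $k\geq 0$, so that
$$
G_{A^t}=\bigcup_{m\geq 0}(A^t)^{-m}\bZ^n
$$
is an increasing union of free abelian groups of rank $n$. For a matrix $S\in\M_n(\bQ)$, the inclusion $S(G_{A^t})\subseteq G_{A^t}$ holds if and only if for every $m\geq 0$ the finitely generated subgroup $S(A^t)^{-m}\bZ^n$ (generated by the columns of the matrix $S(A^t)^{-m}$) is contained in some term $(A^t)^{-k_m}\bZ^n$ of this union, i.e.\ if and only if for every $m\geq 0$ there is $k_m\geq 0$ with $(A^t)^{k_m}S(A^t)^{-m}\in\M_n(\bZ)$. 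Recall from Section \ref{ss:loc} that every endomorphism of $G_{A^t}$ is realized by a matrix, so this genuinely describes the condition $S\in\End(G_{A^t})$.

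Next I would apply this with $S=T^t$ and compare with the definition of $\vec{N}(X_A)$. By \cite[Lemma 2.6]{cp}, $T\in\vec{N}(X_A)$ means $T\in\GL_n(\bZ)$ and for every $m\geq 0$ there is $k_m\geq 0$ with $A^{-m}TA^{k_m}\in\M_n(\bZ)$. Since $\M_n(\bZ)$ is stable under transposition, $A^{-m}TA^{k_m}\in\M_n(\bZ)$ if and only if $(A^{-m}TA^{k_m})^t=(A^t)^{k_m}T^t(A^t)^{-m}\in\M_n(\bZ)$. Hence the second clause defining $\vec{N}(X_A)$ is, verbatim and with the same choice of $m$ and $k_m$, the criterion from the previous paragraph for $T^t(G_{A^t})\subseteq G_{A^t}$, that is, for $T^t\in\End(G_{A^t})$.

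Finally I would deal with membership in $\GL_n(\bZ)$: transposition preserves integrality and $\det T=\det T^t$, so $T\in\GL_n(\bZ)$ if and only if $T^t\in\GL_n(\bZ)$. Combining the two equivalences yields $T\in\vec{N}(X_A)$ if and only if $T^t\in\End(G_{A^t})\cap\GL_n(\bZ)$. The only point requiring genuine care — and the step I would write out most carefully — is the equivalence in the first paragraph between the uniform matrix condition ``$\forall m\ \exists k_m:\ (A^t)^{k_m}S(A^t)^{-m}\in\M_n(\bZ)$'' and the subgroup inclusion $S(G_{A^t})\subseteq G_{A^t}$; this is where finite generation of $\bZ^n$ and the increasing-union description of $G_{A^t}$ enter, and everything else is purely formal manipulation of transposes and determinants.
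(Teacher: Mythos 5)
Your argument is correct and is essentially the paper's own: the paper proves the lemma by transposing the condition $A^{-m}TA^{k_m}\in\M_n(\bZ)$ from \cite[Lemma 2.6]{cp} and observing that the resulting condition $(A^t)^{k_m}T^t(A^t)^{-m}\in\M_n(\bZ)$ says exactly that $T^t$ defines an endomorphism of $G_{A^t}$. You merely spell out in more detail the routine equivalence between this uniform matrix condition and the containment $T^t(G_{A^t})\subseteq G_{A^t}$ via the increasing-union description of $G_{A^t}$, which is fine.
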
 
 
 Therefore, our results 
can be applied to $T^t$. In particular, we can provide an alternate proof of \cite[Theorem 3.3]{cp} for the case when $n=2$ and also generalize some parts of it to an arbitrary $n$. 

\begin{lem}\label{lem:glnz}
Let $A\in\M_n(\bZ)$ be non-singular, let $F\subset \overline{\bQ}$ be any finite  extension of $\bQ$ that contains all the eigenvalues of $A$, and let $\cP'(A)\ne\emptyset$. 
Let $T\in\GL_n(\bZ)$. Then $T(G_A)\subseteq G_A$ if and only if 
\begin{eqnarray*}
&& T(\cX_{A,\fp})\subseteq \cX_{A,\fp} %\label{eq:class0}
\end{eqnarray*}
for any $p\in\cP'$ and a prime ideal $\fp$ of $\cO_F$ above $p$.
%and $T$ satisfies the condition $(A,p)$ for any $p\in\cP'$.
% the restriction $T_p$ of $T$ with respect to the bases ${\bf f}_{t_p+1},\ldots, {\bf f}_{n}$ and ${\bf g}_{t_p+1},\ldots, {\bf g}_{n}$ is a $\bZ_p$-module isomorphism.
%%$$
%T_p:\Span_{\bZ_p}({\bf f}_{t_p+1},\ldots, {\bf f}_{n})\rar
%\overline{G}_{B,p}/\Span_{\bZ_p}({\bf g}_{1},\ldots, {\bf g}_{t_p})
%\cong\Span_{\bZ_p}({\bf g}_{t_p+1},\ldots, {\bf g}_n)
%$$
\end{lem}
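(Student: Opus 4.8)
The plan is to deduce this directly from Theorem \ref{th:prelim0}. That theorem characterizes the matrices $T\in\M_n(\bQ)$ with $T(G_A)\subseteq G_A$ by three conditions: (i) $T\in\M_n(\caR)$; (ii) $T(\cX_{A,\fp})\subseteq\cX_{A,\fp}$ for every $p\in\cP'$ and every prime ideal $\fp$ of $\cO_F$ above $p$ (condition \eqref{eq:class0}); and (iii) the integrality condition \eqref{eq:apcond} on the columns of $T$ indexed by $\{t_p+1,\dots,n\}$. The point is that for $T\in\GL_n(\bZ)$ conditions (i) and (iii) are automatic, so only (ii) survives, and (ii) is exactly the assertion of the lemma.

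Before invoking Theorem \ref{th:prelim0} I would carry out the usual normalization. By \cite[Lemma 3.8]{s2} there is $S\in\GL_n(\bZ)$ such that $G_{SAS^{-1}}$ has a characteristic relative to the standard basis. Conjugation by $S$ maps $\GL_n(\bZ)$ to itself and carries $G_A$ bijectively onto $S(G_A)=G_{SAS^{-1}}$, so $T(G_A)\subseteq G_A$ if and only if $(STS^{-1})(G_{SAS^{-1}})\subseteq G_{SAS^{-1}}$ with $STS^{-1}\in\GL_n(\bZ)$; moreover $S$ sends generalized $\la$-eigenvectors of $A$ to generalized $\la$-eigenvectors of $SAS^{-1}$, hence $\cX_{SAS^{-1},\fp}=S(\cX_{A,\fp})$ and condition (ii) is preserved as well. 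Since $SAS^{-1}\in\M_n(\bZ)$ has the same characteristic polynomial as $A$, we still have $\cP'\ne\emptyset$ and $F$ still contains all the eigenvalues. So we may assume $A$ itself has a characteristic relative to $\{{\bf e}_1,\dots,{\bf e}_n\}$ and apply Theorem \ref{th:prelim0} directly.

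Now let $T\in\GL_n(\bZ)$. Since $T\in\M_n(\bZ)\subseteq\M_n(\caR)$, condition (i) holds. For (iii), every entry $\ga_{kj}$ of $T$ lies in $\bZ\subseteq\bZ_p$ and the characteristic constants $\al_{pik}$ lie in $\bZ_p$, so $\ga_{kj}-\sum_{i=1}^{t_p}\ga_{ij}\al_{pik}\in\bZ_p$ for all $j,k\in\{t_p+1,\dots,n\}$; thus \eqref{eq:apcond} holds automatically. Therefore Theorem \ref{th:prelim0} collapses to the single condition (ii), which is precisely the statement that $T(\cX_{A,\fp})\subseteq\cX_{A,\fp}$ for every $p\in\cP'$ and every prime ideal $\fp$ of $\cO_F$ above $p$.

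I do not expect a genuine obstacle here: the entire content is the bookkeeping that conjugation by an element of $\GL_n(\bZ)$ respects all the data in sight ($\GL_n(\bZ)$, $G_A$, $\cX_{A,\fp}$, $\cP'$, $F$), so that the blanket ``characteristic relative to the standard basis'' hypothesis of Theorem \ref{th:prelim0} may be assumed without loss of generality, together with the trivial remark that an integer matrix has $p$-adically integral entries. If one preferred to avoid Theorem \ref{th:prelim0} entirely, the same conclusion follows from Lemma \ref{l:simple} and \cite[Lemma 4.1]{s2}: for $T\in\GL_n(\bZ)$ the condition $T\in\M_n(\caR)$ is free, so $T(G_A)\subseteq G_A$ iff $T(\overline{G}_{A,p})\subseteq\overline{G}_{A,p}$ for all $p\in\cP'$, and since $T$ preserves $\bZ_p^n$ this is equivalent to $T$ preserving the maximal divisible subgroup $D_p(A)$, hence to $T(\cX_{A,\fp})\subseteq\cX_{A,\fp}$.
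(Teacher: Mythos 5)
Your argument is correct and is essentially the paper's proof: the lemma is deduced from Theorem \ref{th:prelim0} by observing that $T\in\GL_n(\bZ)$ automatically satisfies $T\in\M_n(\caR)$ and the integrality condition \eqref{eq:apcond}, leaving only the eigenspace condition. The additional conjugation bookkeeping you include is fine but already folded into the standing normalization preceding Theorem \ref{th:prelim0}.
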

\begin{proof} Follows from Theorem \ref{th:prelim0}, since \eqref{eq:apcond} holds for any matrix $T\in\M_n(\bZ)$.
\end{proof}

\begin{cor}
Let $A\in\M_n(\bZ)$ be non-singular, let $F\subset \overline{\bQ}$ be any finite  extension of $\bQ$ that contains all the eigenvalues of $A$, and let $\cP'(A)\ne\emptyset$. 
Let $T\in\GL_n(\bZ)$. Then $T\in\vec{N}(X_A)$ if and only if 
\begin{eqnarray*}
&& T(\cX_{A^t,\fp})\subseteq \cX_{A^t,\fp} %\label{eq:class0}
\end{eqnarray*}
for any $p\in\cP'$ and a prime ideal $\fp$ of $\cO_F$ above $p$.
\end{cor}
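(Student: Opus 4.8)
The plan is to obtain the corollary with no new input, simply by chaining Lemma~\ref{lem:nvec} with Lemma~\ref{lem:glnz}. First I would invoke Lemma~\ref{lem:nvec}, which says $T\in\vec{N}(X_A)$ if and only if $T^t\in\End(G_{A^t})\cap\GL_n(\bZ)$. Since $\det T=\det T^t$, one has $T\in\GL_n(\bZ)$ exactly when $T^t\in\GL_n(\bZ)$, so for $T\in\GL_n(\bZ)$ the content of this equivalence reduces to the single inclusion $T^t(G_{A^t})\subseteq G_{A^t}$ --- equivalently, to $T^t$ satisfying the defining condition \eqref{eq:linrepgr} with $A^t$ in place of $A$.

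Next I would feed this into Lemma~\ref{lem:glnz}, applied with the matrix $A$ there replaced by $A^t$ and the matrix $T$ there replaced by $T^t$. A short piece of bookkeeping is needed to check that the hypotheses survive transposition: $A^t$ is non-singular; the auxiliary field $F$ still contains all eigenvalues of $A^t$, since $h_{A^t}=h_A$; and $\cP'(A^t)=\cP'(A)\ne\emptyset$ for the same reason, as the reduction of $h_{A^t}$ modulo any prime $p$ equals that of $h_A$, so both the multiplicity $t_p$ of zero and the condition ``$h\not\equiv x^n$'' defining $\cP'$ are unchanged. Lemma~\ref{lem:glnz} then gives that $T^t(G_{A^t})\subseteq G_{A^t}$ if and only if $T^t(\cX_{A^t,\fp})\subseteq\cX_{A^t,\fp}$ for every $p\in\cP'$ and every prime ideal $\fp$ of $\cO_F$ above $p$, where, as defined in Section~\ref{ss:eigen}, $\cX_{A^t,\fp}$ is the $F$-span of the generalized eigenvectors of $A^t$ attached to the eigenvalues divisible by $\fp$. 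Combining this with the equivalence coming from Lemma~\ref{lem:nvec} proves the corollary.

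Because both ingredients are already established, I do not foresee a genuine obstacle: the corollary is a formal combination of the two preceding results. The one step deserving attention is the transpose bookkeeping above --- verifying that $F$, $\cP'$, $t_p$, and the subspaces $\cX_{A^t,\fp}$ transport correctly under $A\mapsto A^t$, and in particular that the hypothesis $\cP'\ne\emptyset$ of Lemma~\ref{lem:glnz} is inherited by $A^t$ --- after which Lemma~\ref{lem:glnz} applies verbatim to $A^t$ and $T^t$.
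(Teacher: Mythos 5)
Your proof is correct and coincides with the paper's own argument, which likewise just combines Lemma \ref{lem:nvec} with Lemma \ref{lem:glnz} applied to $A^t$ and $T^t$; the transpose bookkeeping you spell out ($h_{A^t}=h_A$, hence the same $F$, the same $\cP'$ and $t_p$) is exactly what the paper leaves implicit. Note only that the chain literally yields the condition $T^t(\cX_{A^t,\fp})\subseteq\cX_{A^t,\fp}$, so the $T$ in the corollary's display should be read as $T^t$ (as in Lemma \ref{lem:glnz}); this point concerns the statement's wording, not the validity of your argument, and applies to the paper's own one-line proof in the same way.
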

\begin{proof}
Follows from Lemma \ref{lem:nvec} and Lemma \ref{lem:glnz}.
\end{proof}

\begin{prop}\label{prop:mine}
Let $A\in\M_n(\bZ)$ be non-singular. 
\begin{enumerate}[$(1)$]
\item If $\cP(A)=\emptyset$ or $\cP'(A)=\emptyset$,  %equivalently, $A\in\GL_n(\bZ)$, 
then $\vec{N}(X_A)=\GL_n(\bZ)$. \\

\item Assume $\cP'(A)\ne\emptyset$. If the characteristic polynomial $h_A\in\bZ[x]$ is irreducible and there exists a prime $p\in\bN$ with 
$(n,t_p)=1$, then $\vec{N}(X_A)$ is the centralizer of $A$ in 
$\GL_n(\bZ)$. Moreover, $\vec{N}(X_A)$ is isomorphic to a subgroup of the group of units in the ring of integers of a number field $K=\bQ(\la)$ generated by an eigenvalue $\la\in\overline{\bQ}$ of $A$.
%the splitting field ??? Or $\bQ(\la)$ ??? of $h_A$. 
Therefore, as such, $\vec{N}(X_A)$ is a finitely generated abelian group and when $n=2$ and $h_A$ has no real roots, then
$\vec{N}(X_A)$ is finite.
%\item 
%If $A\not\in\GL_n(\bZ)$, $\tr A$ and $\det A$ have the same prime divisors, then 
%$$\vec{N}(X_A)=\M_2(\caR).$$ 
% 
%\item Assume $A\not\in\GL_2(\bZ)$, there exists a prime $p\in\bZ$ that 
%divides $\det A$ and does not divide $\tr A$, and the characteristic polynomial of $A$ is irreducible. Then 
%$$\End(G_A)\otimes_{\bZ}\bQ\cong\bQ(\la),$$
%where $\la$ is an eigenvalue of $A$.
\end{enumerate}
\end{prop}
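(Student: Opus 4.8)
The plan is to push everything through Lemma~\ref{lem:nvec} to the group $\End(G_{A^t})\cap\GL_n(\bZ)$ and then invoke the structure results of Section~\ref{s:irredcharpoly}. The first observation is that $h_{A^t}=h_A$, so $A^t$ and $A$ share the same characteristic polynomial, the same $\cP$, the same $\cP'$, and the same invariants $t_p$; hence every hypothesis imposed on $A$ in the statement is automatically inherited by $A^t$. For part~(1): if $\cP(A)=\emptyset$ then $A^t\in\GL_n(\bZ)$, and Lemma~\ref{l:easy}(1) applied to $A^t$ gives $\End(G_{A^t})=\M_n(\bZ)$, so $\End(G_{A^t})\cap\GL_n(\bZ)=\GL_n(\bZ)$; if instead $\cP'(A)=\emptyset$ but $\cP(A)\ne\emptyset$, then $A^t\notin\GL_n(\bZ)$ and Lemma~\ref{l:easy}(2) gives $\End(G_{A^t})=\M_n(\caR)\supseteq\GL_n(\bZ)$, so again the intersection with $\GL_n(\bZ)$ is all of $\GL_n(\bZ)$. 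In either case Lemma~\ref{lem:nvec} yields $\vec{N}(X_A)=\GL_n(\bZ)$. (Alternatively, when $A\in\GL_n(\bZ)$ the defining condition \eqref{eq:linrepgr} holds trivially with $k_m=0$.)

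For part~(2), by Lemma~\ref{lem:nvec} the transpose map $T\mapsto S:=T^t$ is a bijection from $\vec{N}(X_A)$ onto $\End(G_{A^t})\cap\GL_n(\bZ)$. I first identify $\vec{N}(X_A)$ with the centralizer $Z$ of $A$ in $\GL_n(\bZ)$. For $Z\subseteq\vec{N}(X_A)$, which requires no extra hypothesis: if $T\in\GL_n(\bZ)$ and $TA=AT$, then $S=T^t\in\M_n(\bZ)$ commutes with $A^t$ and with all its powers, so $S((A^t)^k\mathbf{x})=(A^t)^k(S\mathbf{x})\in G_{A^t}$ for every $\mathbf{x}\in\bZ^n$ and $k\in\bZ$; hence $S\in\End(G_{A^t})\cap\GL_n(\bZ)$, i.e. $T\in\vec{N}(X_A)$. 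For the reverse inclusion $\vec{N}(X_A)\subseteq Z$: since $A^t$ satisfies the hypotheses of Proposition~\ref{pr:end}, the argument in its proof shows that every nonzero endomorphism of $G_{A^t}$ commutes with $A^t$; as $S=T^t\in\GL_n(\bZ)$ is nonzero, $SA^t=A^tS$, and transposing gives $TA=AT$, so $T\in Z$. Thus $\vec{N}(X_A)=Z$. In particular $\vec{N}(X_A)$ is abelian, being contained in the centralizer of $A$ in $\M_n(\bQ)$, which equals $\bQ[A]\cong K$ because $h_A$ is irreducible.

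Next I embed $\vec{N}(X_A)$ into $\cO_K^\times$. If $S=T^t\in\End(G_{A^t})\cap\GL_n(\bZ)$, then $S^{-1}\in\GL_n(\bZ)$ also commutes with $A^t$ and hence lies in $\End(G_{A^t})$, so $S\in\Aut(G_{A^t})$. Therefore transposition composed with the ring embedding $\imath=\imath(A^t,\la)$ of Proposition~\ref{pr:end} restricts to an injective homomorphism $\vec{N}(X_A)\hookrightarrow\cU_{K,\cS_\la}\subseteq K^\times$ (a homomorphism, not merely an anti-homomorphism, since the source is abelian). Its image lies in $\cO_K^\times$: by construction $\imath(S)$ is an eigenvalue of the integer matrix $S$, hence an algebraic integer, while $\imath(S)^{-1}=\imath(S^{-1})$ is an eigenvalue of the integer matrix $S^{-1}$, hence also an algebraic integer, so $\imath(S)\in\cO_K^\times$. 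This gives the asserted embedding $\vec{N}(X_A)\hookrightarrow\cO_K^\times$. Since $\cO_K^\times$ is a finitely generated abelian group (Dirichlet's unit theorem), so is $\vec{N}(X_A)$; and when $n=2$ with $h_A$ having no real root, $K=\bQ(\la)$ is imaginary quadratic, $\cO_K^\times$ is finite, and hence so is $\vec{N}(X_A)$.

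The only points needing care are the bookkeeping between $A$ and $A^t$ (matching $\cP$, $\cP'$, $t_p$, irreducibility, and translating the centralizer condition through transposition) and the upgrade from $\cS_\la$-units to genuine units of $\cO_K$ — the latter is exactly where integrality of an element of $\GL_n(\bZ)$ together with integrality of its inverse is used. Neither is a serious obstacle once Lemma~\ref{lem:nvec} and Proposition~\ref{pr:end} are in hand.
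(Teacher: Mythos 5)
Your proof is correct and follows essentially the same route as the paper: part (1) via Lemma \ref{l:easy} (through Lemma \ref{lem:nvec}), and part (2) via Proposition \ref{pr:end} applied to $A^t$, with the observation that $T\in\GL_n(\bZ)$ forces $x=\imath(T^t)$ and $x^{-1}=\imath((T^t)^{-1})$ to be algebraic integers, hence $x\in\cO_K^{\times}$. You merely spell out details the paper leaves terse (the two inclusions for the centralizer and the integrality of $x^{\pm1}$), which is fine.
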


Here, the condition $\cP(A)=\emptyset$ is equivalent to $\det A=\pm 1$. Recall that  
$$
\cP'(A)=\left\{p\in\cP(A),\,\,h_A\not\equiv x^n\,(\text{mod }p)\right\}.
$$
%where $h_A\in\bZ[x]$ is the characteristic polynomial of $A$. 
Thus, the condition $\cP'(A)=\emptyset$ in the case when $n=2$ is equivalent to the fact that every prime dividing $\det A$ also divides $\tr A$, $\operatorname{rad}(\det A)$ divides $\operatorname{trace} A$ in the notation of \cite{cp}. 
%Here, for $n\in\bZ$, $n\ne\pm1 $, $\rad(n)\in\bN$ is the product of all distinct prime divisors $p\in\bN$ of $n$. 
In the case when $n=2$, the conditions that the characteristic polynomial $h_A\in\bZ[x]$ is irreducible is equivalent to the fact that $A$ has no rational eigenvalues and the condition that there exists a prime $p\in\bN$ with 
$(n,t_p)=1$ holds automatically when $\cP'(A)\ne\emptyset$. 

\begin{proof}[Proof of Proposition $\ref{prop:mine}$]
Statement (1) follows from Lemma \ref{l:easy}. Statement (2) follows from Proposition $\ref{pr:end}$, since
by above, for any $T\in \vec{N}(X_A)$, $T^t\in\End(A^t)$. Moreover, since $T\in\GL_n(\bZ)$ by the definition of 
$\vec{N}(X_A)$, in the notation of the proof of Proposition $\ref{pr:end}$, $x=\imath(A^t)(T^t)$ is a unit in the ring of integers $\cO_K$ of $K$, {\it i.e.}, $x\in\cO_K^{\times}$. 
%Conversely, clearly, any $x\in\cO_K^{\times}$ defines an element in $\vec{N}(X_A)$ as described in the proof of Proposition $\ref{pr:end}$. 
%Therefore, we have a group isomorphism $\vec{N}(X_A)\cong\cO_K^{\times}$. 
It is well-known that the group of units $\cO_K^{\times}$ of $\cO_K$ is finitely generated and when $n=2$ and $h_A$ has no real roots, then
$\cO_K^{\times}$ is finite.
\end{proof}

Recall that if $n=2$, $A\not\in\GL_2(\bZ)$, and $\cP'(A)\ne\emptyset$, there are three cases distinguished in \cite{s1}: \\

(a) $h_A\in\bZ[x]$ is irreducible (equivalently, $A$ has no rational eigenvalues), \\

(b) $h_A$ is reducible  (equivalently, $A$ has eigenvalues $\la_1,\la_2\in\bZ$), $\rad(\la_1)$ does not divide $\rad(\la_2)$, and $\rad(\la_2)$ does not divide $\rad(\la_1)$, \\

(c) $h_A$ is reducible and every prime 
$p\in\bN$ dividing one eigenvalue, divides the other, {\it e.g.}, $\rad(\la_1)$ divides $\rad(\la_2)$ (denoted by 
$\rad(\la_1)\,\vert \rad(\la_2)$). \\

\noindent Case (a) is covered by 
Proposition \ref{prop:mine} (2). The remaining two cases are covered in the next proposition ({\it c.f.}, \cite[Theorem 3.3 (b)]{cp}). 
%Note that in the cases (b) and (c) we have that $A^t$ is diagonalizable over $\bQ$ and there exists a  non-singular  $N\in\M_2(\bZ)$ such that $N^{-1}A^tN\in\M_2(\bZ)$ is diagonal.

\begin{prop}\label{prop:mine2} Let $A\in\M_2(\bZ)$ be non-singular, $\cP(A)\ne\emptyset$, and $\cP'(A)\ne\emptyset$.
Assume $h_A$ is reducible. Then  $A$ has distinct eigenvalues $\la_1,\la_2\in\bZ$.

$(1) $ Assume that $\rad(\la_1)$ does not divide $\rad(\la_2)$, and $\rad(\la_2)$ does not divide $\rad(\la_1)$.
\begin{itemize}
\item If there exists a matrix $M\in\M_2(\bZ)$ diagonalizing $A^t$ with $\det M$ dividing $2$, then  $\vec{N}(X_A)\cong\bZ/2\bZ\times\bZ/2\bZ$. 
\item Otherwise, $\vec{N}(X_A)=\{\pm \id\}$.
\end{itemize}

$(2)$ Assume $\rad(\la_1)\,\vert \rad(\la_2)$ or $\rad(\la_2)\,\vert \rad(\la_1)$, then $\vec{N}(X_A)$ is isomorphic to 
the group of lower-triangular matrices in $\GL_2(\bZ)$.
\end{prop}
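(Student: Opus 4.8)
The plan is to reduce everything to a computation of $\End(G_{A^t})\cap\GL_2(\bZ)$ and then transpose. By Lemma \ref{lem:nvec}, $T\in\vec{N}(X_A)$ iff $T^t\in\End(G_{A^t})\cap\GL_2(\bZ)$; since $P\mapsto P^t$ is an anti-automorphism of $\GL_2(\bZ)$, this says $\vec{N}(X_A)$ is the transpose of the subgroup $\End(G_{A^t})\cap\GL_2(\bZ)$. Moreover $G_{PA^tP^{-1}}=P(G_{A^t})$ for $P\in\GL_2(\bZ)$, so $\End(G_{PA^tP^{-1}})=P\,\End(G_{A^t})\,P^{-1}$; intersecting with $\GL_2(\bZ)$ and transposing shows that $\vec{N}(X_A)$, up to conjugacy in $\GL_2(\bZ)$ (in particular up to isomorphism), depends only on the $\GL_2(\bZ)$-conjugacy class of $A^t$, so I am free to replace $A^t$ by any convenient normal form. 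First I would record that since $h_A$ is monic and reducible over $\bZ$ its roots $\la_1,\la_2$ are integers, and that $\cP'(A)\ne\emptyset$ forces $\la_1\ne\la_2$ (if $\la_1=\la_2$ then every prime dividing $\det A$ divides $\la_1$, so $h_A\equiv x^2$ modulo it and $\cP'(A)=\emptyset$). Thus $A^t$ has distinct integer eigenvalues and the trichotomy (a)--(c) of \cite{s1} applies to it exactly as to $A$.

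For part $(1)$ (case $(b)$): by \cite[Corollary A.2]{s1} I may assume $A^t=M\La M^{-1}$ with $M,\La$ as in \eqref{eq:dim2}, so $\det M=v$. Any integer matrix $N$ diagonalizing $A^t$ satisfies $N=\Pi E M^{-1}$ with $E$ diagonal and $\Pi$ a permutation matrix, and $N\in\M_2(\bZ)$ forces $v$ to divide both diagonal entries of $E$, whence $\det N\in v\bZ\setminus\{0\}$; hence the hypothesis ``there exists $M\in\M_2(\bZ)$ diagonalizing $A^t$ with $\det M\mid 2$'' is equivalent to $v\mid 2$. By Theorem \ref{th:2dimredb}, $T\in\End(G_{A^t})$ iff $T=MXM^{-1}$ with $X=\diag(x_1,x_2)$, $x_i\in\bZ[\la_i^{-1}]$ and $\frac{x_1-x_2}{v}\in\caR$; if in addition $T\in\GL_2(\bZ)$ then $x_1x_2=\det T=\pm1$ and $x_1+x_2=\operatorname{tr}T\in\bZ$, so $x_1,x_2$ are rational algebraic integers that are units, i.e. $x_1,x_2\in\{\pm1\}$. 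The choices $X=\pm\id$ always give $T=\pm\id\in\GL_2(\bZ)$, while $X=\diag(1,-1)$ and $X=\diag(-1,1)$ give $T=\pm\begin{pmatrix}1 & -2u/v\\0 & -1\end{pmatrix}$, which lies in $\M_2(\bZ)$ iff $v\mid 2u$, i.e. (as $(u,v)=1$) iff $v\mid2$; when $v\mid2$ the remaining conditions of Theorem \ref{th:2dimredb} hold trivially. Hence: if $v\mid2$ then $\End(G_{A^t})\cap\GL_2(\bZ)$ consists of these four matrices, each non-identity element being an involution, so it is a Klein four-group and $\vec{N}(X_A)\cong\bZ/2\bZ\times\bZ/2\bZ$; if $v\nmid2$ then only $\pm\id$ survive and $\vec{N}(X_A)=\{\pm\id\}$.

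For part $(2)$ (case $(c)$): relabelling if necessary I may assume $\rad(\la_2)\mid\rad(\la_1)$, so $\cP'(A)$ consists of primes dividing $\la_1$ but not $\la_2$. Extending a primitive $\la_1$-eigenvector of $A^t$ to a $\bZ$-basis of $\bZ^2$ and then applying \cite[Corollary A.2]{s1}, I may assume $A^t=M\La M^{-1}$ with $M,\La$ as in \eqref{eq:dim2} and $\mathbf{e}_1$ the $\la_1$-eigenvector. Then Theorem \ref{th:2dimredc} gives that $\End(G_{A^t})$ consists precisely of the upper-triangular $T=\begin{pmatrix}x & y\\0 & z\end{pmatrix}\in\M_2(\caR)$ with $z\in\bZ[\la_2^{-1}]$. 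Imposing $T\in\GL_2(\bZ)$ forces $x,y,z\in\bZ$ and $xz=\pm1$, hence $x,z\in\{\pm1\}$ with $y\in\bZ$ arbitrary; thus $\End(G_{A^t})\cap\GL_2(\bZ)$ is exactly the group of upper-triangular matrices in $\GL_2(\bZ)$, and transposing gives that $\vec{N}(X_A)$ is isomorphic to the group of lower-triangular matrices in $\GL_2(\bZ)$.

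The steps that require care are the bookkeeping in the first paragraph---correctly passing from $\vec{N}(X_A)$ to $\End(G_{A^t})\cap\GL_2(\bZ)$ for a $\GL_2(\bZ)$-normal-form representative of $A^t$ via transpose and conjugation---and, in part $(1)$, the identification of the stated hypothesis on diagonalizing matrices with the single divisibility $v\mid2$. I do not expect a genuine obstacle beyond these reductions: the substantive input, namely the explicit description of $\End(G_{A^t})$ in cases $(b)$ and $(c)$, is already furnished by Theorems \ref{th:2dimredb} and \ref{th:2dimredc}.
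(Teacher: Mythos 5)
Your argument is correct and follows essentially the same route as the paper: Lemma \ref{lem:nvec} converts $\vec{N}(X_A)$ into $\End(G_{A^t})\cap\GL_2(\bZ)$, and Theorems \ref{th:2dimredb} and \ref{th:2dimredc} applied to $A^t$ and $T^t$ yield the two cases, with your extra bookkeeping (conjugation invariance, identifying the diagonalizing-matrix hypothesis with $v\mid 2$) merely making explicit what the paper leaves implicit. One small slip worth fixing: an integer matrix diagonalizing $A^t$ has the form $N=ME\Pi$ with $E$ diagonal and $\Pi$ a permutation (not $\Pi EM^{-1}$), and integrality of $N$ forces the entries of $E$ to be integers (not to be divisible by $v$); the determinant then lies in $v\bZ\setminus\{0\}$ exactly as you go on to use.
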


\begin{proof} Recall that 
$T\in\vec{N}(X_A)$ if and only if $T^t\in\End(G_{A^t})\cap\GL_n(\bZ)$ by Lemma \ref{lem:nvec}.  
Then (1) follows from Theorem \ref{th:2dimredb} applied to $A^t$ and $T^t$. Indeed, for $T^t$ given by \eqref{eq:T}, we have that $T\in\GL_2(\bZ)$ if and only if 
%Assume $\rad(\la_1)$ does not divide $\rad(\la_2)$, and $\rad(\la_2)$ does not divide $\rad(\la_1)$. As in the previous paragraph, without loss of generality, we can assume that $A^t$ has the form \eqref{eq:triang}. Let $T^t\in\vec{N}(X_A)$. 
%Since there exists a prime dividing $\la_2$ that does not divide $\la_1$ and there exists a prime dividing 
%$\la_1$ that does not divide $\la_2$, as in the previous paragraph, this implies that $T^t$ preserves eigenspaces of $A^t$,
%{\it i.e.}, 
%$T^t=MXM^{-1}$, where $X=\diag\left(\begin{matrix} x & y\end{matrix}\right)\in\GL_2(\bQ)$ is diagonal. Since $\det T^t\in\GL_2(\bZ)$, we have
$x_1=\pm 1$, $x_2=\pm 1$. Moreover, one can check that $T^t$ has integer coefficients if and only if $v\in\bZ$ divides 
$x_1-x_2$ in $\bZ$. 
Since $x_1-x_2=0$ or $x_1-x_2=\pm 2$, if $v$ divides $2$, then 
$\vec{N}(X_A)\cong\bZ/2\bZ\times\bZ/2\bZ$ and otherwise, $\vec{N}(X_A)=\{\pm \id\}$. 

\sbr

Similarly, 
(2) follows from Theorem \ref{th:2dimredc} applied to $A^t$ and $T^t$.
\end{proof}

\begin{rem}
In \cite{cp}, the authors conclude that $\vec{N}(X_A)$ is computable when $n=2$ and pose the question whether the group $\vec{N}(X_A)$ is computable when $n>2$. In \cite[p. 750, Main Algorithm 2]{ehb}, the authors show that the centralizer of an element in $\GL_n(\bZ)$ is computable.
Thus, if assumptions in $(1)$, $(2)$ of Proposition \ref{prop:mine} hold (the assumptions are computable), then 
$\vec{N}(X_A)$ is also computable by Proposition \ref{prop:mine} and \cite[p. 750, Main Algorithm 2]{ehb}.
%since $GL_n(\bZ)$ is finitely generated and stabilizers of finitely generated groups are computable in the sense that there are algorithms that .
%
%Note that if the characteristic polynomial $h_A$ of $A$ is irreducible and $x\in\cO_K^{\times}$, then $T$ given by \eqref{eq:tx} has entries in $\bQ$ and $\det T=\pm 1$. However, it might happen that $T\not\in\M_n(\bZ)$, since $\det M$ is not a unit in general. Therefore, under the isomorphism $T\mapsto x$, $\vec{N}(X_A)$ might not be the whole group of units $\cO_K^{\times}$. It is known that $T\in\End(G_A)$ with $\det T=\pm 1$ defines a continuous orbit equivalence from $Y_{A}$ to $Y_A$ \cite{gps}, \cite{ms}. Thus, if $h_A$ is irreducible and $(n,t_p)=1$, then 
%the group of continuous orbit equivalences from $Y_{A}$ to $Y_A$ is isomorphic to the group of units $\cO_K^{\times}$. 
%
%??? Proposition \ref{prop:mine} gives an answer to the question posed in \cite{cp} on wether groups of the form 
%$\vec{N}(X_A)$ are computable when $n>2$ ($n=2$ is covered in \cite{cp}). Namely, if $h_A$ is irreducible and $(n,t_p)=1$, then  
%$\vec{N}(X_A)$ is computable.
%Indeed, unit groups in the ring of integers of a number field have been studied extensively and there are available algorithms to compute them explicitly using quantum computing \cite{h}. The case of an irreducible $h_A$ is generic in the sense that most $A$ have irreducible characteristic polynomials and whether the condition $(n,t_p)=1$ holds, can also be checked. 
\end{rem}

\begin{example}\label{ex:4}\cite[Example 10]{s1}. Let $A\in\M_n(\bZ)$ be non-singular with an irreducible characteristic polynomial  $h_A\in\bZ[x]$. By Proposition \ref{prop:mine} (2), if there exists $t_p$ with $(n,t_p)=1$, then $\vec{N}(X_A)$ is the centralizer of $A$ in 
$\GL_n(\bZ)$ and $\vec{N}(X_A)$ is abelian. 
In this example, we show that this is not always the case. 
Here $n=4$ and $t_p=2$, so the condition $(n,t_p)=1$ in Proposition \ref{prop:mine} (2) does not hold. 

\sbr

Let $h(x)=x^4-2x^3+21x^2-20x+5$, irreducible over $\bQ$, and let $\la\in\overline{\bQ}$ be a root of $h$, $K=\bQ(\la)$. 
%We use information about number field $K=\bQ(\la)$ from \cite{db} and \cite{sage}. 
By \cite{db}, $\cO_K=\bZ[\la]$, $K$ is Galois over $\bQ$, $\Gal(K/\bQ)\cong(\bZ/2\bZ)^2$.
%, and the ideal class group of $K$ is non-trivial. % isomorphic to $(\bZ/2\bZ)^2$. 
%Thus, there exists an ideal $J_1$ of $\bZ[\la]$ such that its ideal class $[J_1]$ is not trivial, {\it i.e.}, 
%there is no $x\in K$ such that $J_1=x\bZ[\la]$. By \cite{sage}, we can take $J_1$ to be the ideal of $\bZ[\la]$ generated by $7$ and $\la^3-\la^2+20\la-4$ over $\bZ[\la]$, denoted by $J_1=(7,\la^3-\la^2+20\la-4)$. One can also find a $\bZ$-basis of $J_1$, {\it e.g.},
%$J_1=\bZ[\om_1,\om_2,\om_3,\om_4]$, where
%\begin{eqnarray*}
%\om_1&=&7,\\
%\om_2&=&2\la^3-3\la^2+41\la-16, \\
%\om_3&=&\la^3-\la^2+20\la-4, \\
%\om_4&=&-2\la^3+3\la^2-40\la+25.
%\end{eqnarray*}
%Since $[J_1]$ is non-trivial,
%by Latimer--MacDuffee--Taussky Theorem \cite{t}, matrices $A$, $B$ corresponding to $(1)=\bZ[\la]$
%and $J_1$, respectively, are not conjugated by a matrix from $\GL_4(\bZ)$. We find 
%$A$, $B$ from the condition that
%$\la \la^{i-1}=\sum_{j=1}^4a_{ij}\la^{i-1}$, 
%$\la\om_i=\sum_{j=1}^4b_{ij}\om_j$, $i=1,\ldots,4$, respectively. Thus,
Let 
$$
{\bf u}=\left(\begin{matrix}
1 & \la & \la^2 & \la^3
\end{matrix}
\right)^t
,\quad
A=\left(\begin{matrix}
0 & 1 & 0 & 0 \\
0 & 0 & 1 & 0 \\
0 & 0 & 0 & 1 \\
-5 & 20 & -21 & 2
\end{matrix}
\right),
%{\bf v}=\left(\begin{matrix}
%\om_1 & \om_2 & \om_3 & \om_4
%\end{matrix}
%\right)^t
$$
so that ${\bf u}$ is an eigenvector of $A$ corresponding to $\la$, and 
%Thus,  
%$$
%A=\left(\begin{matrix}
%0 & 1 & 0 & 0 \\
%0 & 0 & 1 & 0 \\
%0 & 0 & 0 & 1 \\
%-5 & 20 & -21 & 2
%\end{matrix}
%\right),\quad
%B=\left(\begin{matrix}
%-9 & 7 & 0 & 7 \\
%-6 & 4 & 1 & 4 \\
%5 & -4 & 1 & -4 \\
%-8 & 5 & 1 & 6
%\end{matrix}
%\right).
%$$
$A$ has characteristic polynomial $h_A(x)=h(x)=x^4-2x^3+21x^2-20x+5$. Also, $\det A=5$, $\cP(A)=\cP'(A)=\{5\}$, $t_5=2$.
By \cite{sage}, $(5)=\fp_1^2\fp_2^2$, where $\fp_1,\fp_2$ are prime ideals of 
$\bZ[\la]$, $\fp_1=(\la)$, and there exists $g\in \Gal(K/\bQ)$ of order 
$2$ such that 
$g(\fp_i)=\fp_i$, $i=1,2$. In the notation of \cite[Theorem 4.3]{s2}, 
$\cX_{A,\fp_1}=\Span_{K}({\bf u},g({\bf u}))$. We have that
$$
{\bf u}=\left(\begin{matrix}
1 & \la & \la^2 & \la^3
\end{matrix}\right)^t,\quad 
g({\bf u})=\left(\begin{matrix}
1 & g(\la) & g(\la^2) & g(\la^3)
\end{matrix}\right)^t,
$$
where 
\begin{eqnarray*}
g(\la)&=&-4\la^3+6\la^2-81\la+40,\\
g(\la^2)&=&-4\la^3+5\la^2-80\la+20,\\
g(\la^3)&=&75\la^3-114\la^2+1520\la-770.
\end{eqnarray*}
Then $g({\bf u})=L{\bf u}$, where $L\in\GL_4(\bZ)$ and 
$$
L=\left(\begin{matrix}
1 & 0 & 0 & 0 \\
40 & -81 & 6 & -4 \\
20 & -80 & 5 & -4 \\
-770 & 1520 & -114 & 75
\end{matrix}\right).
$$
Since $g({\bf u})$ is not a multiple of ${\bf u}$ (they are eigenvectors corresponding to two distinct eigenvalues $g(\la)$, $\la$, respectively), we see that ${\bf u}$ is not an eigenvector of $L$. Therefore, $A$ and $L$ do not commute. On the other hand, $L\in\vec{N}(X_A)$. Indeed, $\Gal(K/\bQ)$ acts transitively on the prime ideals $\fp_1,\fp_2$ above $5$, so there exists 
$g'\in\Gal(K/\bQ)$ such that $g'(\fp_1)=\fp_2$. 
By above, $L(\cX_{A,\fp_1})\subseteq \cX_{A,\fp_1}$ and applying $g'$, we get
$L(\cX_{A,\fp_2})\subseteq \cX_{A,\fp_2}$. By \cite[Theorem 4.3]{s2}, $L(G_A) \subseteq G_A$. On the other hand, for any 
unit $\xi\in\cO_K^{\times}$, let $T=T(\xi)$ be given by \eqref{eq:lm} and \eqref{eq:tx}, {\it i.e.},
%\bbe\label{eq:lm}
%\La=\diag\left(\begin{matrix} \sg_1(\la)& \ldots & \sg_n(\la)\end{matrix}\right),\quad M=\left(\begin{matrix} \sg_1({\bf u}) & \ldots & \sg_n({\bf u}\end{matrix})\right)
%\ee
%with each $\sg_i({\bf u})$ written as a column, $i\in\{1,\ldots,n\}$. Then
\bbe\label{eq:tx1}
T=MXM^{-1},\quad X=\diag\left(\begin{matrix} \sg_1(\xi) & \sg_2(\xi) & \sg_3(\xi) &\sg_4(\xi)\end{matrix}\right),
\ee 
where $M$ is a matrix diagonalizing $A$ and $\Gal(K/\bQ)=\{\sg_1, \sg_2, \sg_3, \sg_4 \}$. Then, by construction, 
$T\in\GL_4(\bQ)$ and $\det T=\pm 1$. Since $\cO_K=\bZ[\la]$, $\xi=\sum_{i=0}^3a_i\la^i$ with $a_0,a_1,a_2,a_3\in\bZ$. 
This implies that $T=\sum_{i=0}^3a_iA^i$ and hence $T\in\GL_4(\bZ)$ and $T\in\vec{N}(X_A)$. Therefore, 
$\imath^{-1}(\cO_K^{\times})\subseteq \vec{N}(X_A)$. However, $L\not\in \imath^{-1}(\cO_K^{\times})$ and does not commute with the image. 
\end{example}

\end{document}